\theoremstyle{definition}
\def\tsc#1{\csdef{#1}{\textsc{\lowercase{#1}}\xspace}}
\newtheorem{theorem}{Theorem}
\newtheorem{lemma}{Lemma}
\newtheorem{proposition}{Proposition}
\newtheorem{assumption}{Assumption}
\newproof{pf}{Proof}
\newmdenv[shadow=false,shadowcolor=black,font=\sffamily,rightmargin=0.1pt]{shadedbox}
\newcommand{\F}{\mathcal{F}}
\newcommand{\Prob}{\mathbb{P}}
\newcommand{\Ex}{\mathbb{E}} 
\newcommand{\R}{\mathbb{R}}
\newcommand{\bz}{\mathbf{z}}
\newcommand{\bx}{\mathbf{x}}
\newcommand{\by}{\boldsymbol{y}}
\newcommand{\V}{\mathbb{V}}
\newcommand{\bX}{\boldsymbol{X}}
\journal{\;\;}
\begin{document}
\begin{frontmatter}




\title{Distributionally Robust Optimization under Mean-Covariance Ambiguity Set and Half-Space Support for Bivariate Problems}



\author[inst1]{Jiayi Guo}
\ead{guo.jiayi@sufe.edu.cn}

\author[inst1]{Hao Qiu}
\ead{qiu.hao@163.sufe.edu.cn}

\author[inst2,inst3]{Zhen Wang}
\ead{wangzhen@cuhk.edu.cn}
\author[inst2]{Zizhuo Wang  \corref{cor1}}
\ead{wangzizhuo@cuhk.edu.cn}
\author[inst1]{Xinxin Zhang}
\ead{xinxin_zhang@163.sufe.edu.cn}

\cortext[cor1]{Corresponding author}

\affiliation[inst1]{organization={School of Information Management and Engineering, Shanghai University of Finance and Economics},
            city={Shanghai},
            postcode={200433}, 
            country={P.R. China}
           }


\affiliation[inst2]{organization={School of Data Science, The Chinese University of Hong Kong, Shen Zhen},
            city={Shenzhen, Guangdong},
            postcode={518172}, 
            country={P.R. China}
            }

\affiliation[inst3]{organization={University of Science and Technology of China},
            city={Hefei, Anhui},
            postcode={230026}, 
            country={P.R. China}
            }

\begin{abstract}
In this paper, we study a bivariate distributionally robust optimization problem with mean-covariance ambiguity set and half-space support. Under a conventional type of objective function widely adopted in inventory management, option pricing, and portfolio selection, we obtain closed-form tight bounds of the inner problem in six different cases. Through a primal-dual approach, we identify the optimal distributions 
in each case. As an application in inventory control, we first derive the optimal order quantity and the corresponding worst-case distribution, extending the existing results in the literature.
Moreover, we show that under the distributionally robust setting, a centralized inventory system does not necessarily reduce the optimal total inventory, which contradicts conventional wisdom.
Furthermore, we identify two effects, a conventional pooling effect, and a novel shifting effect, the combination of which determines the benefit of incorporating the covariance information in the ambiguity set. Finally, we demonstrate through numerical experiments the importance of keeping the covariance information in the ambiguity set instead of compressing the information into one dimension.
\end{abstract}



\begin{keyword}
robustness and sensitivity analysis \sep distributionally robust optimization \sep bivariate moment problem \sep mean-covariance ambiguity set \sep newsvendor
\end{keyword}

\end{frontmatter}


\section{Introduction}

In recent years, {\it distributionally robust optimization} (DRO) has become a popular approach to address optimization problems affected by uncertainty. Its applications have received considerable attention in various fields including economics, management science, and mathematical finance \cite{bertsimas2019adaptive}. The appeal of distributionally robust optimization lies in its flexibility in specifying uncertainty beyond a fixed probability distribution, as well as in its ability to produce computationally tractable models \cite{chen2019distributionally}.
We refer the readers to \cite{scarf1958min,dupavcova1987minimax,gilboa2004maxmin,breton1995algorithms,shapiro2002minimax} for some comprehensive reviews for DRO problems.

The problem studied in distributionally robust optimization is as follows: 
\begin{equation}
\inf_{\boldsymbol{y}\in Y}\ \sup_{\Prob\in \F}\ \Ex_{\Prob} \left[f\left(\by,\bX \right)\right]
   \label{generalmodel}
\end{equation}
where $X\in \mathbb{R}^n$ is the realization of the uncertainty, $\by \in Y \subseteq \mathbb{R}^m$ is the decision taken before the realization, and  $f\left(\by, \bX \right): \R^m\times\R^n \rightarrow \R$ is the objective function. Given the decision $\boldsymbol{y}$, the {\it inner problem} of \eqref{generalmodel} evaluates the worse-case objective:
\begin{equation}
	\centering
	\sup_{\Prob\in \mathcal{F}}\ \Ex_{\Prob} \left[f\left(\by,\bX \right)\right],
	\label{innerproblem}
\end{equation}
where the {\it ambiguity set}, denoted by $\F$, characterizes a set of possible distributions $X$ may follow. In the literature, various characterizations of ambiguity set have been considered, such as support \cite{ghaoui2003worst, natarajan2010tractable}, moments \cite{scarf1958min, bertsimas2005optimal}, shape \cite{popescu2005semidefinite}, and dispersions \cite{hanasusanto2017ambiguous}.
Among them, one of the most classical ambiguity sets is the {\it mean-covariance} ambiguity set given as follows:
\begin{equation}
	\centering
	\mathcal{F} = \left\{\mathbb{P}\in \mathbb{M}(\mathbb{S}) : \mathbb{E}_{\mathbb{P}} [\boldsymbol{X}] = \boldsymbol{\mu}, \mathbb{E}_{\mathbb{P}} \left[\boldsymbol{X}\boldsymbol{X}^T \right] = \Sigma \right\},
	\label{mean-var}
\end{equation}
where $\boldsymbol{\mu}$, $\Sigma$ are the mean and covariance of the distribution and $\mathbb{S}$ is the support.
If $\mathbb{S}=\R_+^n$ or $\mathbb{S}=\R^n$, then we call the support of $\F$ {\it half-space} or {\it full-space} respectively. The key advantage of the mean-covariance ambiguity set lies in its simplicity and computational tractability, especially under the full-space support.
Generally speaking, the computational complexity of DRO problem depends on the complexity of solving the inner problem, which is a semi-infinite linear program on its probability measure, and NP-hard in general \cite{bertsimas2005optimal}.

In this paper, we consider a class of widely-adopted loss functions for the inner problem as follows:
\begin{equation}
	\centering
	\ell (\boldsymbol{X})=\max\{u_1\boldsymbol{w}^T{\boldsymbol{X}}+v_1, u_2\boldsymbol{w}^T{\boldsymbol{X}}+v_2\},
	\label{obj}
\end{equation}
with $ \boldsymbol{X} \in \mathbb{R}^{n}$, $\boldsymbol{w}\in \mathbb{R}_+^n$ and $u_1, u_2, v_1, v_2\in\mathbb{R}$. 
Such a loss function is adopted in a wide range of applications,
including inventory management \cite{scarf1958min,doan2020robust}, option pricing \cite{natarajan2010tractable,tian2008moment, zuluaga2009third, bertsimas2002relation}, and portfolio selection \cite{doan2015robustness,mohajerin2018data,liu2022kernel,zhu2009worst}. Regarding the ambiguity set, 
we focus on
the mean-covariance ambiguity set, which is a common choice and has been studied in every application listed above (see, for instance, \cite{natarajan2010tractable,chen2011tight,scarf1958min,bertsimas2002relation,tian2008moment}). Furthermore, we consider the problem under the half-space support, which is of relevance under many situations  (e.g., nonnegative demand in inventory control \cite{govindarajan2021distribution}, nonnegative losses in a stop-loss contract \cite{tian2008moment}). Moreover, by incorporating the half-space information, the ambiguity sets shrink, resulting in a less conservative optimal solution.

However, for problems with a mean-covariance ambiguity set, the methodology for solving the DRO problem under half-space support is less well-studied than that under full-space support. Specifically, when considering the problems with objectives of a linear reward function under the full-space support, Popescu demonstrates a powerful projection property that can translate an $n-$dimensional multivariate inner problem into an equivalent univariate problem \cite{popescu2007robust}. 
Unfortunately, it is not easy 
to incorporate half-space support information in such an approach. 
In fact, as shown in \cite{bertsimas2002relation}, 
even determining the feasibility of such a problem under half-space support is NP-hard.
Therefore, given mean and covariance information, much research on incorporating half-space support into DRO  focus on approximation algorithms. For example, Rujeerapaiboon et al. \cite{rujeerapaiboon2018chebyshev} study the upper Chebyshev bound of a product of non-negative random variables given their first two  
moments. They show that in order to obtain a tractable numerical procedure, the first two moments should follow a permutation-symmetric structure. 
Kong et al. \cite{kong2013scheduling} formulate a semidefinite programming relaxation for the appointment scheduling problem given the mean and covariance information for the nonnegative 
random service time variables.
Natarajan et al. \cite{natarajan2010tractable} provide a mathematically tractable lower bound for the expected piecewise linear utility function 
by taking a convolution of two problems that are computationally simple through relaxing either the half-space support to the full-space or the mean-covariance ambiguity set to the one with only mean information. 


In this paper, we further shed light on the DRO problem under mean-covariance ambiguity set and half-space support. Particularly, we are able to derive an analytical solution to such a problem for the two-dimensional case.
Two-dimensional problem 
has been extensively studied in the literature, as it is simple
and useful to illustrate the relation between the optimal decisions and the correlations of two random variables. 
For example, two-dimensional problems are considered in the variations of newsvendor model including dual sourcing \cite{tomlin2005value}, two markets \cite{kouvelis1997newsvendor}, and two products \cite{lau1988maximizing,li1991two}. Moreover, the stop-loss problem in option pricing typically involves two types of losses (i.e., property losses and liability losses) \cite{tian2008moment}. As far as we know, it is computationally challenging even to solve two-dimensional DRO problems with mean-covariance ambiguity set under half-space support.
Specifically, a typical approach to solving those problems under half-space support is to relax the support to the full-space \cite{govindarajan2021distribution,cox2010bounds,bertsimas2002relation}. 
Moreover, as the projection theorem \cite{popescu2007robust} cannot be applied to those problems, both works \cite{cox2010bounds,bertsimas2002relation} obtain numerical solutions to the relaxed problems. Govindarajan et al. achieve analytical solutions to the relaxed problem under strong assumptions \cite{govindarajan2021distribution}. 
Unlike the relaxation of the support, Tian relaxes the two-dimensional problem into a univariate one and applies the moment approach \cite{tian2008moment}. To sum up, the analytical result of a two-dimensional problem with mean-covariance ambiguity set under half-space support is largely missing in the literature, and our work fills in the gap with a conclusive answer.

We summarize the contributions of this paper as follows:
\begin{itemize}
	\item[1.]  We propose an analytical solution for the two-dimensional DRO problem with mean-covariance ambiguity set and loss function $\ell$ under half-space support. Specifically, the optimal value of this problem can be characterized by six different cases. To obtain the optimal solution in each case, we extend the primal-dual approach in \cite{guo2022unified}, designed for the univariate moment problems, to our two-dimensional problems. 
 	\item[2.]  We extend the loss function $\ell$ defined in (\ref{obj}) to a generalized multi-piece quadratic objective function and provide a semi-definite programming reformulation for the DRO problem with mean-covariance ambiguity set and the generalized loss function. 
	\item[3.]  We apply our analytical result to inventory control problems, providing the optimal order quantity and the worst-case distribution, which is an extension of the result in \cite{scarf1958min}. 
 Moreover, we find that under the DRO setting with mean-covariance ambiguity set, inventory pooling does not necessarily reduce the optimal total inventory, which contradicts with the conventional wisdom that pooling can reduce the total inventory. Furthermore, we identify two effects, a conventional pooling effect and a novel shifting effect, which together determine the benefit of incorporating the covariance information in the ambiguity set. Finally, we demonstrate through numerical experiments the importance of keeping the covariance information in the ambiguity set, instead of compressing the information in one dimension.
\end{itemize}

The rest of this paper is organized as follows. In Section \ref{section2}, we introduce our model.
 In Section \ref{section3}, we formally present the bivariate moment problem and its closed-form solution. We also demonstrate some properties of the worst-case distribution and provide a numerical approach for the generalized objectives. In Section \ref{section: Impact of Pooling}, we study an inventory control problem as the outer problem and provide some managerial insights for adopting the mean-covariance DRO model for such problems compared to other approaches. We conclude the paper and point out some future research directions in Section \ref{sec:conclusion}.
 
%

\section{Model}\label{section2}

Consider the following distributionally robust optimization problem
\begin{equation}
\inf_{\boldsymbol{y}\in Y}\ \sup_{\mathbb{P}\in \mathcal{F}}\ \mathbb{E}_{\mathbb{P}} \left[f\left(\boldsymbol{X}, \boldsymbol{y} \right)\right]
\end{equation}
with feasible region $Y\subseteq \mathbb{R}^m$, ambiguity set $\mathcal{F}$, and loss function $f: \mathbb{R}^n\times \mathbb{R}^m \rightarrow \mathbb{R}$. The loss function depends both on the decision vector $\boldsymbol{y}\in \mathbb{R}^m$ and the random vector $\boldsymbol{X}\in \mathbb{R}^n$.

We first investigate the inner worst-case expectation problem. For ease of notation, we suppress the dependence on the decision variable $\boldsymbol{y}$. Specifically, we consider the inner worst-case expectation problem of the form
\begin{equation}
\label{eq:innerWorstCaseExpectationProblem}
\sup_{\mathbb{P}\in \mathcal{F}}\ \mathbb{E}_{\mathbb{P}} \left[\ell\left( \boldsymbol{w}^T \boldsymbol{X} \right)\right]
\end{equation}
where the loss function $\ell:\mathbb{R}\rightarrow \mathbb{R}$ is a convex piecewise linear function with the following form
$$
\ell (x) = \max\left\{u_1x +v_1, u_2x +v_2 \right\},
$$
and $\boldsymbol{w}\in \mathbb{R}_+^n$ is a nonnegative coefficient of the random vector $\boldsymbol{X}$. The ambiguity set $\mathcal{F}$ of the distribution $\mathbb{P}$ consists of nonnegative random vector $\boldsymbol{X}$ with given mean $\boldsymbol{\mu}$ and second-order moment $\Sigma$, that is
\begin{equation}
\label{eq:multidimensionAmbiguitySet}
\mathcal{F} = \left\{\mathbb{P}\in \mathbb{M}(\mathbb{R}_+^n) : \mathbb{E}_{\mathbb{P}} [\boldsymbol{X}] = \boldsymbol{\mu}, \mathbb{E}_{\mathbb{P}} \left[\boldsymbol{X}\boldsymbol{X}^T \right] = \Sigma \right\}.
\end{equation}
The problem of form (\ref{eq:innerWorstCaseExpectationProblem}) is widely adopted in DRO. We provide the following two applications as examples.

\paragraph{Example 1. (Multi-dimensional Newsvendor Problem)} We consider a multi-dimensional distributionally robust 
newsvendor problem in a centralized inventory management setting (see \cite{bimpikis2016inventory}) as follows:
\begin{equation}
    \label{eq:dronewsvender}
    \inf_{q\geq 0} \left\{\sup_{\mathbb{P}\in \mathcal{F}} \mathbb{E}_{\mathbb{P}} \Big[\Big(\sum_{i=1}^n X_i -q \Big)_+\Big] + (1-\eta)q \right\},
\end{equation}
where we denote $(\cdot)_+=\max\{\cdot, 0\}$ in the rest of this paper. Specifically, 
$q$ and $\eta$ are the order quantity and the critical ratio respectively, and $\sum_{i=1}^n X_i $ represents the pooling of uncertain demands.
Obviously, the worst-case expectation of the inner problem can be considered as a special case of problem
(\ref{eq:innerWorstCaseExpectationProblem}) with $u_1 = 1$, $v_1 = -q$, $u_2 = v_2 = 0$ and $\boldsymbol{w} = \boldsymbol{1}$.

\paragraph{Example 2. (Mean-CVaR Portfolio Selection Problem)}
We consider a capital market consisting of $n$ assets whose uncertain returns are captured
by an $n-$dimensional random vector $\boldsymbol{X}$. A portfolio is encoded by an  $n-$dimensional vector $\boldsymbol{w}$ which has to satisfy some constraints $\boldsymbol{w}\in W$.
The distributionally robust mean-CVaR portfolio selection problem can be formulated as follows:
$$\inf_{\boldsymbol{w}\in W} \left\{\sup_{\mathbb{P}\in \mathcal{F}} \Big\{\mathbb{E}_{\mathbb{P}} \left[-\boldsymbol{w}^T\boldsymbol{X} \right] + \rho\ \mathbb{P}\text{-CVaR}_{\alpha}\left(-\boldsymbol{w}^T\boldsymbol{X} \right) \Big\} \right\}$$
or equivalently,
$$\inf_{\boldsymbol{w}\in W} \bigg\{-\boldsymbol{w}^T\boldsymbol{\mu} + \rho\  \inf_{\tau\in \mathbb{R}}\Big\{  \tau + \frac{1}{\alpha} \sup_{\mathbb{P}\in \mathcal{F}}\mathbb{E}_{\mathbb{P}} \left[ \left(-\boldsymbol{w}^T\boldsymbol{X}-\tau \right)_+ \right] \Big\} \bigg\}$$
where the equivalence is formally proved in
\cite{rockafellar2000optimization}.
Again, the worst-case expectation of the inner problem can be considered as a special case 
of problem (\ref{eq:innerWorstCaseExpectationProblem}) with 
$u_1 = -1$, $v_1 = -\tau$, and $u_2 =v_2 = 0$.

Without loss of generality, we assume $u_1<u_2$ in the rest of this paper. Otherwise, supposing $u_1=u_2=0$, the loss function $\ell(x)$ can be reduced to a linear function 
$\ell (x) = ux + \max\{v_1, v_2\}$. As a result, problem (\ref{eq:innerWorstCaseExpectationProblem}) has a trivial solution as
\begin{equation*}
     \mathbb{E}_{\mathbb{P}} [\ell\left( \boldsymbol{w}^T \boldsymbol{X} \right)]=u_1 \mathbb{E}_{\mathbb{P}} [ \boldsymbol{w}^T \boldsymbol{X} ] + \max\{v_1, v_2\} = u_1 \boldsymbol{w}^T \boldsymbol{\mu} + \max\{v_1, v_2\}.
\end{equation*}

 In fact, with this assumption, we can simplify the formulation of problem (\ref{eq:innerWorstCaseExpectationProblem})
by the following proposition.

\begin{proposition}
    \label{proposition1}
Suppose $u_1<u_2$. We can reformulate 
the problem $\sup_{\mathbb{P}\in \mathcal{F}}\ \mathbb{E}_{\mathbb{P}} \left[\ell\left( \boldsymbol{w}^T \boldsymbol{X} \right)\right]$ in (\ref{eq:innerWorstCaseExpectationProblem}) as follows:
$$
(u_2-u_1) \sup_{\mathbb{P}\in \tilde{\mathcal{F}}} \mathbb{E}_{\mathbb{P}} \left[\left(\boldsymbol{\boldsymbol{1}}^T \tilde{\boldsymbol{X}} + \frac{v_2-v_1}{u_2-u_1} \right)_+ \right] + u_1 \boldsymbol{w}^T\boldsymbol{\mu} +v_1
$$
with the transformed random variable $\tilde{\boldsymbol{X}} = \boldsymbol{w}\circ \boldsymbol{X}$ and the transformed ambiguity set
$$
	\tilde{\mathcal{F}} = \left\{\mathbb{P}\in \mathbb{M}(\mathbb{R}_+^n) : \mathbb{E}_{\mathbb{P}} [\tilde{\boldsymbol{X}}] = \boldsymbol{w}\circ \boldsymbol{\mu}, \mathbb{E}_\mathbb{P} \big[\tilde{\boldsymbol{X}}\tilde{\boldsymbol{X}}^T \big] = \boldsymbol{w}\boldsymbol{w}^T\circ \Sigma \right\}
$$
where the notation $\circ$ denotes the Hadamard product. 
\end{proposition}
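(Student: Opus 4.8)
The plan is to reduce $\ell$ to a single positive-part term by an elementary algebraic identity, then exploit linearity of expectation together with a change of variables. First I would rewrite the maximum using $\max\{a,b\}=a+(b-a)_+$ as
$$
\ell(x)=\max\{u_1 x+v_1,\,u_2 x+v_2\}=(u_1 x+v_1)+\bigl((u_2-u_1)x+(v_2-v_1)\bigr)_+ .
$$
Since $u_1<u_2$ gives $u_2-u_1>0$, the positive homogeneity of $(\cdot)_+$ lets me factor the slope out of the positive part:
$$
\ell(x)=u_1 x+v_1+(u_2-u_1)\Bigl(x+\tfrac{v_2-v_1}{u_2-u_1}\Bigr)_+ .
$$

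Next I would substitute $x=\boldsymbol{w}^T\boldsymbol{X}$ and take expectations. The linear term contributes $u_1\boldsymbol{w}^T\mathbb{E}_{\mathbb{P}}[\boldsymbol{X}]+v_1=u_1\boldsymbol{w}^T\boldsymbol{\mu}+v_1$, which is constant over $\mathcal{F}$ because every $\mathbb{P}\in\mathcal{F}$ shares the same mean $\boldsymbol{\mu}$; hence it can be pulled outside the supremum. Because the remaining coefficient $u_2-u_1$ is strictly positive, it also factors out of the supremum, leaving
$$
\sup_{\mathbb{P}\in\mathcal{F}}\mathbb{E}_{\mathbb{P}}\bigl[\ell(\boldsymbol{w}^T\boldsymbol{X})\bigr]=u_1\boldsymbol{w}^T\boldsymbol{\mu}+v_1+(u_2-u_1)\sup_{\mathbb{P}\in\mathcal{F}}\mathbb{E}_{\mathbb{P}}\Bigl[\bigl(\boldsymbol{w}^T\boldsymbol{X}+\tfrac{v_2-v_1}{u_2-u_1}\bigr)_+\Bigr].
$$

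The final step is the change of variables $\tilde{\boldsymbol{X}}=\boldsymbol{w}\circ\boldsymbol{X}$. I would observe that $\boldsymbol{w}^T\boldsymbol{X}=\boldsymbol{1}^T\tilde{\boldsymbol{X}}$, so the integrand is already in the target form, and then verify that pushing a measure forward through this componentwise scaling maps $\mathcal{F}$ onto $\tilde{\mathcal{F}}$. Concretely, $\mathbb{E}[\tilde{\boldsymbol{X}}]=\boldsymbol{w}\circ\mathbb{E}[\boldsymbol{X}]=\boldsymbol{w}\circ\boldsymbol{\mu}$, and entrywise $\mathbb{E}[\tilde X_i\tilde X_j]=w_iw_j\,\mathbb{E}[X_iX_j]$, i.e. $\mathbb{E}[\tilde{\boldsymbol{X}}\tilde{\boldsymbol{X}}^T]=\boldsymbol{w}\boldsymbol{w}^T\circ\Sigma$; nonnegativity of $\boldsymbol{w}$ preserves the support $\mathbb{R}_+^n$. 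Since this map is a bijection between the two moment classes (invert by dividing coordinatewise by $w_i$), the two suprema coincide and the claimed reformulation follows.

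The part that will need the most care — rather than the most ingenuity — is this last step: establishing that the pushforward is a genuine bijection between $\mathcal{F}$ and $\tilde{\mathcal{F}}$ so that the supremum is preserved in both directions, and handling the degenerate coordinates where $w_i=0$ (for which the moment constraints force $\tilde X_i=0$ almost surely and the corresponding component of $\boldsymbol{X}$ drops out of the objective). Everything else is routine algebra and linearity of expectation.
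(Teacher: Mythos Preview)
Your proposal is correct and follows essentially the same approach as the paper: rewrite $\ell$ via $\max\{a,b\}=a+(b-a)_+$, factor out $u_2-u_1>0$ from the positive part, pull the linear term outside the supremum using the fixed mean, and then pass to $\tilde{\boldsymbol{X}}=\boldsymbol{w}\circ\boldsymbol{X}$. If anything, you are slightly more careful than the paper, which simply asserts the equivalence of $\mathcal{F}$ and $\tilde{\mathcal{F}}$ without discussing the bijection or the degenerate $w_i=0$ coordinates.
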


This proposition demonstrates that it is sufficient to solve the simplified problem in the form of
\begin{equation}
\label{eq:innerNewsvendor}
\sup_{\mathbb{P}\in \mathcal{F}}\ \mathbb{E}_{\mathbb{P}} \Big[\Big(\sum_{i=1}^n X_i -q \Big)_+\Big]
\end{equation}
so as to solve the original problem (\ref{eq:innerWorstCaseExpectationProblem}) with $q=-\frac{v_2-v_1}{u_2-u_1}$. We relegate the proof of Proposition \ref{proposition1} to Appendix A.

Generally speaking, even determining the feasibility of problem (\ref{eq:innerNewsvendor}) is NP-hard as shown in 
\cite{bertsimas2002relation}. For univariate case ($n=1$), Scarf derives a closed-form solution in  \cite{scarf1958min}, which we state below for completeness.
 \begin{lemma}[\cite{scarf1958min}]
Let the ambiguity set 
$\mathcal{F}^\dag=\left\{\mathbb{P}\in \mathbb{M}(\mathbb{R}_+) : \mathbb{E}_{\mathbb{P}} [X] = \mu, \mathbb{E}_{\mathbb{P}} [X^2] = \Sigma \right\}$.
\begin{enumerate}
    \item[1.] If $0 \leq q \leq \frac{\Sigma}{2\mu}$, then the optimal value is
\begin{equation} \label{eq: scarfOptQSm}
\sup_{\mathbb{P}\in \mathcal{F}^\dag} \mathbb{E}_{\mathbb{P}} [(X -q)_+]=\mu-q \cdot \frac{\mu^2}{\Sigma}
\end{equation}
with an optimal distribution 
$X^*=\begin{cases}
0 &\text{w.p.} \;\; 1 -  \frac{\mu^2}{\Sigma}\\
\frac{\Sigma}{\mu} &\text{w.p.}\;\; \frac{\mu^2}{\Sigma}
\end{cases}$.
\item[2.]If $q>\frac{\Sigma}{2\mu}$, then the optimal value is
\begin{equation} \label{eq: scarfOptQLg}
\sup_{\mathbb{P}\in \mathcal{F}^\dag} \mathbb{E}_{\mathbb{P}} [(X -q)_+]=\frac{1}{2}(Q-q+\mu)
\end{equation}
with an optimal distribution
$\mathbb{P}^*=\begin{cases}
q-Q &\text{w.p.} \;\; \frac{1}{2}+\frac{q-\mu}{2Q}\\
q+Q &\text{w.p.}\;\; \frac{1}{2}-\frac{q-\mu}{2Q}
\end{cases}$,
where $Q=\sqrt{q^2-2\mu q+\Sigma}$.
\end{enumerate}
\label{scarflemma}
 \end{lemma}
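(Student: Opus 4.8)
The plan is to treat problem \eqref{eq:innerNewsvendor} for $n=1$ as a semi-infinite linear program over measures and exploit weak LP duality: exhibit simultaneously a feasible primal distribution and a feasible dual certificate whose objective values coincide. The dual of $\sup_{\mathbb{P}\in\mathcal{F}^\dag}\mathbb{E}_{\mathbb{P}}[(X-q)_+]$ is
\begin{equation*}
\inf_{a,b,c}\ \{a\Sigma+b\mu+c\}\quad\text{s.t.}\quad ax^2+bx+c\ge (x-q)_+\ \text{ for all } x\ge 0,
\end{equation*}
i.e. one seeks a quadratic $g(x)=ax^2+bx+c$ that majorizes the convex piecewise-linear payoff on the half-line while minimizing $\mathbb{E}_{\mathbb{P}}[g]=a\Sigma+b\mu+c$, a quantity that is identical for every $\mathbb{P}\in\mathcal{F}^\dag$. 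Any such $g$ yields an upper bound and any $\mathbb{P}\in\mathcal{F}^\dag$ yields a lower bound; matching the two proves optimality and, through complementary slackness, pins down the worst-case distribution.

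Since the candidate optima are two-point measures, I would construct the dual certificate so that $g$ touches $(x-q)_+$ exactly at the two support points, which is the complementary-slackness requirement. In Case~2 ($q>\Sigma/2\mu$) the support points $q\pm Q$ both lie strictly inside $\mathbb{R}_+$ (note $q-Q> 0 \Leftrightarrow q> \Sigma/2\mu$), so $g$ must be tangent to the line $y=0$ at $q-Q$ and tangent to $y=x-q$ at $q+Q$; this forces the double-root form $g(x)=\frac{1}{4Q}(x-q+Q)^2$, and the identity $(x-q+Q)^2-4Q(x-q)=(x-q-Q)^2\ge 0$ verifies $g(x)\ge x-q$ on all of $\mathbb{R}$. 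In Case~1 ($0\le q\le \Sigma/2\mu$) the lower support point sits on the boundary $x=0$, where tangency relaxes to the one-sided condition $g(0)=0$; together with tangency to $y=x-q$ at $x=\Sigma/\mu$ this determines $g$ uniquely, and the factorizations $g(x)=x\big(\tfrac{q\mu^2}{\Sigma^2}x+1-\tfrac{2q\mu}{\Sigma}\big)$ and $g(x)-(x-q)=q\big(\tfrac{\mu}{\Sigma}x-1\big)^2$ confirm nonnegativity on $\mathbb{R}_+$ precisely because $q\ge 0$ and $q\le \Sigma/2\mu$.

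It then remains to check the primal side: that each stated two-point measure lies in $\mathcal{F}^\dag$ and attains the claimed value. This is a direct computation---verifying $\mathbb{E}[X]=\mu$, $\mathbb{E}[X^2]=\Sigma$, nonnegativity of the support, and that the probabilities lie in $[0,1]$ (the last using the standing feasibility condition $\Sigma\ge\mu^2$, which is simply $\mathrm{Var}(X)\ge 0$)---after which the objective evaluates to $\mu-q\mu^2/\Sigma$ and $\tfrac12(Q-q+\mu)$ respectively, matching the dual values obtained above via $Q^2=q^2-2\mu q+\Sigma$.

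The main obstacle is not the algebra but getting the case split and the boundary behavior right: one must recognize that the threshold $q=\Sigma/2\mu$ is exactly the value at which the lower support point $q-Q$ reaches the boundary of the support $\mathbb{R}_+$, which is what makes the half-space constraint bind and changes the form of both the optimal distribution and the dual certificate. A secondary point needing care is the duality gap: weak duality alone suffices here, since we produce matching primal and dual objectives, so strong duality need not be invoked separately---but this matching is possible only when $(\mu,\Sigma)$ lies in the feasible moment region $\Sigma\ge\mu^2$.
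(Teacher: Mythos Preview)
Your proof is correct. The paper does not actually prove this lemma---it is cited from \cite{scarf1958min} and stated ``for completeness''---but your primal-dual argument (exhibit a feasible two-point distribution and a quadratic majorant $g$ with matching objective values, using complementary slackness to identify the touching points) is precisely the methodology the paper itself adopts in Appendix~B.2 when proving the bivariate analogues (Lemmas~B.2.1--B.2.6); your observation that the threshold $q=\Sigma/2\mu$ is exactly where the lower support point $q-Q$ hits the boundary of $\mathbb{R}_+$ is also the mechanism driving the paper's six-case split in the bivariate problem.
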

 
 In this paper, we study the bivariate case  ($n=2$) of problem (\ref{eq:innerNewsvendor}), which takes the correlation between two random variables into account. We provide a closed-form solution for the inner problem (\ref{eq:innerNewsvendor}), based on which an efficient algorithm for the outer DRO problem is also provided.

\section{Bivariate Moment Problem}\label{section3}

In this section, we study the bivariate case ($n=2$) of  problem (\ref{eq:innerNewsvendor}) stated as follows: 
\begin{equation}
\label{eq:bivariatePrimal}
v_P(\theta;q)=\sup_{\mathbb{P}\in \mathcal{F}(\theta)}\ \mathbb{E}_{\mathbb{P}} \left[\left(X_1+X_2 -q \right)_+\right]
\end{equation}
where $\theta=(\mu_1,\mu_2,\Sigma_{11},\Sigma_{22},\Sigma_{12})$ represents the moment information of random variables $X_1$ and $X_2$. For simplicity, we denote $(\Sigma_{11},\Sigma_{22},\Sigma_{12}):=(a\mu_1^2, b\mu_2^2, c\mu_1\mu_2)$ for the rest of this paper, and consider the ambiguity set 
\begin{equation} \label{eq: amSet}
\F(\theta)={
\left\{
\begin{array}{l}
    \mathbb{P}\in 
     \mathbb{M}(\mathbb{R}^2_+)  
\end{array} 
\left|
\begin{array}{l}
\mathbb{E}_{\mathbb{P}} [X_1] = \mu_1, \;\;\;\;\;\,
 \mathbb{E}_{\mathbb{P}} [X_2] = \mu_2   \\
\mathbb{E}_{\mathbb{P}} \left[X_1^2 \right] = a\mu_1^2, \;\;
 \mathbb{E}_{\mathbb{P}} \left[X_2^2 \right] = b \mu_2^2, \;\;
\mathbb{E}_{\mathbb{P}} \left[X_1X_2 \right] = c \mu_1 \mu_2
\end{array}\right.
\right\}
}.
\end{equation}

Thus, we have the correlation $\rho= \frac{c-1}{\sqrt{(a-1)(b-1)}}$ and the covariance matrix 
\begin{equation} \label{eq: M}
\begin{aligned}
M=\left[
\begin{array}{cc}
(a-1)\mu_1^2&(c-1)\mu_1\mu_2\\
(c-1)\mu_1\mu_2&(b-1)\mu_2^2
\end{array}
\right].
\end{aligned}
\end{equation} 
The nonemptyness of ambiguity set $\F(\theta)$ requires that the covariance matrix $M$ be positive semi-definite, which means $a\geq1$, $b \geq 1$ and $(a-1)(b-1)\geq(c-1)^2$. 
Note that $X_1,  X_2\in \mathbb{R}_+$ and thus $c \geq 0$. To exclude trivial cases, we focus on $a, b>1$.
In all, without loss of generality, we assume that our input parameters satisfy the following assumption in the remaining part of this paper.



\begin{assumption}
    \label{pro:1}
    We assume $a > 1$, $b > 1$, $c\geq 0$  and $(a-1)(b-1)\geq (c-1)^2$.
\end{assumption}
%


Next, we present the closed-form solution for the bivariate moment problem (\ref{eq:bivariatePrimal}).

\subsection{Closed-Form Solution}\label{section: Closed-form Solution}

Before establishing our closed-form solution, we first define three terms $Q_a, Q_b, Q_c$ that will be frequently used in this subsection:
$$
\begin{aligned}
Q_a &= \sqrt{q^2 - 2q \frac{a-c}{a-1}\mu_2 + \frac{ab-c^2}{a-1}\mu_2^2}, \\
Q_b &= \sqrt{q^2 - 2q \frac{b-c}{b-1}\mu_1 + \frac{ab-c^2}{b-1}\mu_1^2}, \\
Q_c &= \sqrt{q^2 - 2q(\mu_1 + \mu_2) + a\mu_1^2 + b\mu_2^2 + 2c\mu_1\mu_2}.
\end{aligned}
$$
In the following lemma, we present the conditions that correspond to six different optimal values of problem (\ref{eq:bivariatePrimal}). Specifically, 
this lemma shows that such six conditions divide the feasible region into six disjoint sets.

\begin{lemma}
\label{lem:conditions}
	Every feasible input of $(\theta,q)$ satisfies one and only one of the six conditions in the following table.
 \begin{center}
        \begin{tabular}{|c|c|c|c|c|c|c}
            \hline
             Condition 1 & Condition 2 & Condition 3 & Condition 4 & Condition 5 & Condition 6 \\
            \hline
              $Q_a \geq q,$ 
              & $Q_b < q,$
              & $Q_a < q,$  & $Q_b < q,$ & $Q_a < q,$ & $Q_a > |\zeta_a|,$ \\
              $Q_b \geq q$ & $Q_b \leq \zeta_b$ & $Q_a \leq \zeta_a$ & $Q_b \leq -\zeta_b$ & $Q_a \leq -\zeta_a$ & $Q_b > |\zeta_b|$ \\
             \hline
        \end{tabular}
\end{center}
where $\zeta_a=a\mu_1+c\mu_2-q$ and $\zeta_b=c\mu_1+b\mu_2-q$.
\end{lemma}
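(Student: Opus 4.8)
The plan is to treat the statement as a purely logical/algebraic ``mutually exclusive and collectively exhaustive'' (MECE) claim and reduce it to a handful of sign comparisons. Throughout write $S_a=a\mu_1+c\mu_2>0$ and $S_b=c\mu_1+b\mu_2>0$, and note that Assumption~1 together with $\mu_1,\mu_2>0$ gives $ab-c^2>0$. The first step is to observe that every inequality in the table is the sign of an affine function of $q$: one computes $Q_a^2-q^2=\tfrac{\mu_2}{a-1}\big[(ab-c^2)\mu_2-2(a-c)q\big]$ (so $R_a:=\{Q_a\ge q\}\iff q\le q_a^\ast:=\tfrac{(ab-c^2)\mu_2}{2(a-c)}$ when $a>c$, and $R_a$ is automatic otherwise), $q^2-\zeta_a^2=(2q-S_a)S_a$, and that $k_a:=Q_a^2-\zeta_a^2$ is affine in $q$ with slope proportional to $D_a:=(a-1)\mu_1+(c-1)\mu_2$ and clean value $k_a(S_a/2)=\tfrac{a\mu_2}{a-1}(S_b-S_a)$, with symmetric formulas on the $b$-side. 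Abbreviating $P_a:=\{Q_a\le|\zeta_a|\}$ (and likewise $R_b,P_b$), I check that Conditions $3$ and $5$ together are exactly $\neg R_a\wedge P_a$, with $\operatorname{sign}(\zeta_a)$ selecting which of the two holds; Conditions $2,4$ are $\neg R_b\wedge P_b$; Condition~$1$ is $R_a\wedge R_b$; and Condition~$6$ is $\neg P_a\wedge\neg P_b$. The within-pair split is exclusive and exhaustive because $Q_a>0$ rules out $\zeta_a=0$.

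The second step is a truth-table reduction. Writing the four ``super-events'' $\mathrm{S1}=R_aR_b$, $\mathrm{S24}=\neg R_b\,P_b$, $\mathrm{S35}=\neg R_a\,P_a$, $\mathrm{S6}=\neg P_a\neg P_b$, an enumeration of the $16$ Boolean states of $(R_a,P_a,R_b,P_b)$ shows they fail to be MECE on exactly four states: the uncovered state $(R_a,P_a,\neg R_b,\neg P_b)$ and its $a\leftrightarrow b$ mirror (which would break exhaustiveness), and the doubly covered states $(R_a,\neg P_a,R_b,\neg P_b)=\mathrm{S1}\cap\mathrm{S6}$ and $(\neg R_a,P_a,\neg R_b,P_b)=\mathrm{S35}\cap\mathrm{S24}$ (which would break exclusivity). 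The whole lemma therefore reduces to showing these four sign-patterns cannot occur for any feasible $(\theta,q)$.

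Third, I dispatch the two easy families. For the uncovered state, $R_a\wedge P_a$ forces $q^2\le\zeta_a^2$, i.e.\ $q\le S_a/2$, while $\neg R_b\wedge\neg P_b$ forces $q>S_b/2$ and $q>q_b^\ast$; chaining $q_b^\ast<q\le S_a/2$ with $S_b/2<q$ and using $c\ge0$ yields $(a-c)\mu_1<(b-c)\mu_2$, which contradicts $S_a>S_b$. For $\mathrm{S1}\cap\mathrm{S6}$, take $S_a\ge S_b$ without loss of generality by the $a\leftrightarrow b$ symmetry of the six conditions. The key input is that $(a-1)(b-1)\ge(c-1)^2$ forces $D_a\ge0$: were $D_a<0$ one would have $(a-1)\mu_1<(1-c)\mu_2$, which together with $S_a\ge S_b$ also yields $(b-1)\mu_2<(1-c)\mu_1$, and multiplying these contradicts the PSD bound. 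Hence $k_a$ is nondecreasing; since $S_a\ge S_b$ gives $q_a^\ast\le S_a/2$ and $k_a(S_a/2)\le0$, the hypothesis $R_a$ (hence $q\le q_a^\ast\le S_a/2$) forces $k_a(q)\le0$, i.e.\ $P_a$, contradicting $\neg P_a$.

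The main obstacle is the last pattern $\mathrm{S35}\cap\mathrm{S24}$. With $S_a\ge S_b$ one has $q_a^\ast\le q_b^\ast$, $D_a\ge0$, $k_a(S_a/2)\le0$, and $k_b(S_b/2)\ge0$. If $D_b\ge0$, the monotonicity argument applied to side $b$ again produces $\neg P_b$ and we are done. The delicate case is the negatively-correlated regime $D_b<0$, where $P_a$ holds on a left half-line $\{q\le q_a^{\ast\ast}\}$ and $P_b$ on a right half-line $\{q\ge q_b^{\ast\ast}\}$, with $q_a^{\ast\ast},q_b^{\ast\ast}$ the unique zeros of the affine maps $k_a,k_b$. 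The state is then empty precisely when $q_a^{\ast\ast}\le q_b^{\ast\ast}$, equivalently $k_a(q_b^{\ast\ast})\ge0$; establishing this, after clearing the sign-sensitive denominators $D_a,D_b$, is exactly where $(a-1)(b-1)\ge(c-1)^2$ must be invoked, with equality corresponding to the rank-one case in which the two regions touch at a single $q$. I expect this root-ordering computation in the $D_b<0$ regime to be the crux; it is also the reason the partition is clean only away from the degenerate perfectly-correlated boundary, which should be recorded through strictness of the PSD condition (or handled as a separate, measure-zero case).
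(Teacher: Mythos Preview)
Your Boolean truth-table reduction is a different organizational device from the paper's proof. The paper instead fixes $\theta$ in one of six parameter regimes (distinguished by the signs of $a-c$, $b-c$, and of what you call $D_a,D_b$; the paper writes these as $\overline{C},\underline{C}$) and, for each regime, computes the explicit $q$-interval occupied by every condition and then reads the partition of $\mathbb{R}_+$ directly off a table. Both routes rest on the same algebra---the affineness of $k_a,k_b$ in $q$, the relation $D_a-D_b=S_a-S_b$, and the observation that the PSD bound forces $D_a\ge 0$ once $S_a\ge S_b$---so the difference is structural, not mathematical. Your framework handles the two uncovered states and $\mathrm{S1}\cap\mathrm{S6}$ correctly.

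The genuine gap is precisely where you flag it: the $D_b<0$ subcase of $\mathrm{S35}\cap\mathrm{S24}$. You reduce it to the root ordering $q_a^{\ast\ast}\le q_b^{\ast\ast}$ but stop short of verifying it, and this is the one place where the PSD bound enters nontrivially. The paper closes it via the closed-form identity
\[
q_a^{\ast\ast}-q_b^{\ast\ast}\;=\;\frac{D_a-D_b}{D_a D_b}\,E,\qquad E:=(a-1)c\,\mu_1^2+(ab-a-b+c^2)\,\mu_1\mu_2+(b-1)c\,\mu_2^2,
\]
together with the lower bound $E\ge c\cdot\mathrm{Var}(X_1+X_2)$ obtained from $(a-1)(b-1)\ge(c-1)^2$; in the regime $D_a>0>D_b$ this yields $q_a^{\ast\ast}<q_b^{\ast\ast}$ whenever $E>0$, completing your argument. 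Your instinct about the rank-one boundary is also warranted: when $c=0$ and $(a-1)(b-1)=1$ one gets $E=0$, the two roots coincide, and Conditions~3 and~4 actually meet at a single $q$ (and in the $D_a=D_b=0$ variant, Conditions~2 and~3 overlap on an interval). The paper's assertion that $E>0$ tacitly relies on $c>0$, so this degenerate stratum deserves a separate sentence in either proof.
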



We relegate the proof of this lemma to Appendix B.1. Below we provide a sample plot as a graphical illustration. 
\begin{figure}[htbp]
	\centering
	\includegraphics[width=0.55\linewidth]{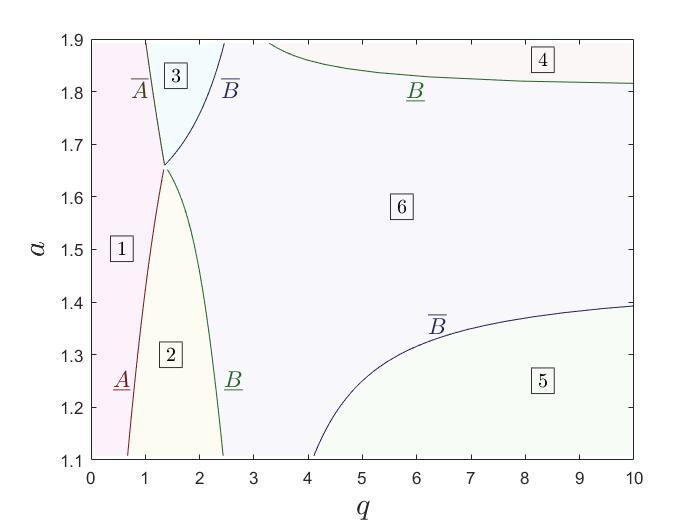}
	\caption{The Feasible Region is Divided into Six Conditions.}
	\label{fig:split}
\end{figure}
Specifically, we plot
Figure \ref{fig:split} with $\mu_1=1,\mu_2=1.5, a + b = 3$ and $c = 0.7$, when ranging $a$ and $q$ over $[1.1, 1.9]$ and $[0,10]$ respectively. 
Figure 1 shows that the feasible region can be divided into six disjoint subsets, where borderlines are in the form of $\bar{A}$, $\underline{A}$, $\bar{B}$, $\underline{B}$, $\bar{C}$ and $\underline{C}$ that are functions of input $(\theta,q)$ defined in the proof of this lemma, see equation (B.5) in Appendix B.1.

For each condition, the following theorem presents a closed-form optimal value of the bivariate moment problem (\ref{eq:bivariatePrimal}).
\begin{theorem} \label{thm: vp}
	Given a non-empty ambiguity set $\F(\theta)$ and $q>0$, the optimal value $v_P(q;\theta)$ can be characterized as
	{\small
	$$v_P(q;\theta)=\left\{ 
	\begin{aligned}
	& \mu_1 + \mu_2 - q\cdot \frac{a+b - 2c}{ab - c^2}\ & &\text{if Condition 1 holds};\\
 &\frac{b-1}{2b}\left( (q+Q_b) - \frac{b-c}{b-1}\mu_1 \right) + \mu_1+\mu_2 - q\ & &\text{if Condition 2 holds};\\
& \frac{a-1}{2a}\left( (q+Q_a) - \frac{a-c}{2a}\mu_2 \right) + \mu_1+\mu_2 - q\ & &\text{if Condition 3 holds};\\ 
& \frac{b-1}{2b} \left(\frac{b-c}{b-1}\mu_1 - (q-Q_b) \right)\ & &\text{if Condition 4 holds};\\ 
&  \frac{a-1}{2a}\left(\frac{a-c}{a-1}\mu_2 -(q-Q_a) \right) \ & &\text{if Condition 5 holds};\\
	& \frac{1}{2} ( Q_c- q+\mu_1+\mu_2 )\ & &\text{if Condition 6 holds}.
	\end{aligned}
	\right.
	$$}
\end{theorem}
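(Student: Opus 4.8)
The plan is to solve the bivariate moment problem (\ref{eq:bivariatePrimal}) by the classical primal-dual (duality) approach for moment problems, adapted here to the two-dimensional half-space support. On the dual side, we associate multipliers $\lambda_0,\lambda_1,\lambda_2,\lambda_{11},\lambda_{22},\lambda_{12}$ to the six moment constraints (the normalization $\mathbb{E}[1]=1$, the two first moments, and the three second moments) and obtain the dual program
\begin{equation*}
\inf\ \lambda_0 + \lambda_1\mu_1 + \lambda_2\mu_2 + \lambda_{11}a\mu_1^2 + \lambda_{22}b\mu_2^2 + \lambda_{12}c\mu_1\mu_2
\end{equation*}
subject to the semi-infinite constraint that the quadratic form
\begin{equation*}
g(x_1,x_2)=\lambda_0+\lambda_1 x_1+\lambda_2 x_2+\lambda_{11}x_1^2+\lambda_{22}x_2^2+\lambda_{12}x_1x_2
\end{equation*}
dominates the payoff $(x_1+x_2-q)_+$ for every $(x_1,x_2)\in\mathbb{R}_+^2$. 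Because $(x_1+x_2-q)_+=\max\{0,\,x_1+x_2-q\}$ is piecewise linear, this one semi-infinite constraint splits into two: $g(x)\ge 0$ on $\mathbb{R}_+^2$ and $g(x)\ge x_1+x_2-q$ on $\mathbb{R}_+^2$. Strong duality holds here by a standard argument (the ambiguity set is nonempty by Assumption \ref{pro:1}, and the moment vector can be taken in the interior of the moment cone), so it suffices to exhibit, in each of the six cases of Lemma \ref{lem:conditions}, a \emph{primal} distribution $\mathbb{P}$ feasible for $\mathcal{F}(\theta)$ together with a \emph{dual} multiplier vector $\lambda$ feasible for the dual, whose objective values coincide and equal the claimed $v_P$.

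The execution would proceed case by case, guided by complementary slackness: the optimal $\mathbb{P}$ must be supported only on points where $g$ touches the payoff, i.e. where $g(x)=(x_1+x_2-q)_+$. First I would guess the support structure from the geometry of each condition. Conditions 1 reads $Q_a\ge q,\ Q_b\ge q$, which should correspond to a low-$q$ regime where an interior two-point mass on the line $x_1+x_2=\Sigma/\mu$-type atoms gives $v_P=\mu_1+\mu_2-q\cdot\frac{a+b-2c}{ab-c^2}$ — the natural bivariate analogue of Scarf's first case (Lemma \ref{scarflemma}), and indeed the coefficient $\frac{a+b-2c}{ab-c^2}$ is $\mathbf{1}^T M^{-1}\mathbf{1}$ up to scaling, which I would verify directly. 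Conditions 2 and 3 (symmetric under swapping the roles of $X_1,X_2$, i.e. $a\leftrightarrow b$, $\mu_1\leftrightarrow\mu_2$) should correspond to a ``mixed'' regime where one coordinate collapses onto a boundary-type Scarf solution while the other is pinned; the appearance of $Q_a$ (resp. $Q_b$) and the prefactor $\frac{a-1}{2a}$ (resp. $\frac{b-1}{2b}$) signals a two-point marginal in one variable as in Scarf's second case. Conditions 4 and 5 are again a symmetric pair, and Condition 6, governed by $Q_c$ with value $\tfrac12(Q_c-q+\mu_1+\mu_2)$, is the direct two-dimensional lift of Scarf's large-$q$ case along the aggregate $X_1+X_2$. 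For each case I would write down the candidate atoms with their probabilities, impose the five moment equations plus normalization to solve for the free parameters, check nonnegativity of the probabilities and that the atoms lie in $\mathbb{R}_+^2$ (this is exactly where the half-space support constraints in each condition, e.g. the sign conditions on $\zeta_a,\zeta_b$, become binding), and compute $\mathbb{E}_{\mathbb{P}}[(X_1+X_2-q)_+]$.

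To certify optimality I would then construct the matching dual quadratic $g$ in each case: choose $\lambda_{11},\lambda_{22},\lambda_{12}$ so that $g(x)-\max\{0,x_1+x_2-q\}$ is a nonnegative function on $\mathbb{R}_+^2$ vanishing precisely at the chosen support points, which forces $g$ to be a sum-of-squares-plus-nonnegative-boundary-term majorant touching the payoff at those atoms. Evaluating the dual objective at this $\lambda$ should reproduce the same closed-form value, closing the gap. The main obstacle I anticipate is twofold. First, handling the half-space support $\mathbb{R}_+^2$ rather than $\mathbb{R}^2$: the dominating quadratic $g$ need not be globally nonnegative, only nonnegative on the nonnegative orthant, so the certificate is not a plain SOS but must incorporate multipliers against $x_1\ge0$ and $x_2\ge0$ (a copositivity-type condition), and getting the exact tangency/support alignment right in the mixed Conditions 2--5 is delicate. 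Second, verifying that the six conditions are genuinely exhaustive and mutually exclusive (established in Lemma \ref{lem:conditions}) and that the candidate solution in each region indeed remains primal-feasible there — in particular that all atom probabilities stay in $[0,1]$ and all atoms stay in $\mathbb{R}_+^2$ throughout the region — rather than only at isolated points. I would organize the write-up so that the two symmetric pairs (2,3) and (4,5) are proved once and obtained from each other by the $X_1\leftrightarrow X_2$ relabeling, leaving Conditions 1 and 6 as the two essentially distinct boundary computations.
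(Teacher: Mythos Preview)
Your proposal is correct and follows essentially the same route as the paper: for each of the six conditions you exhibit an explicit finitely-supported primal distribution in $\mathcal{F}(\theta)$ and a matching dual quadratic majorant, verify feasibility of both, and check that the two objective values coincide; the paper does exactly this in Lemmas B.2.1--B.2.6, exploiting the same $(X_1\leftrightarrow X_2)$ symmetry to pair Conditions 2/3 and 4/5, and handling the orthant constraint on the dual side by convexity-plus-gradient checks at boundary points rather than by abstract copositivity, but the logic is the same. One small refinement: the paper does not invoke strong duality as a black box but instead closes the gap constructively in every case, so you need not worry about an interior-point/Slater argument if you carry the construction through.
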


We relegate the proof of this theorem to Appendix B.
In this theorem, we characterize the optimal value $v_P(q;\theta)$ of problem (\ref{eq:bivariatePrimal}) by six different cases.
To obtain the optimal distributions, we extend the primal-dual approach in \cite{guo2022unified}, designed for the univariate moment problems, to our two-dimensional problems, and successfully identify a corresponding optimal distribution for each case stated in Lemma B.2.1-B.2.6.

There are several interesting observations of the optimal values in Theorem \ref{thm: vp}. First of all, the optimal value under condition 6 is the same as that in a univariate pooling problem. Specifically, let $\bar{X}=X_1+X_2$ and we consider the univariate problem $\sup_{\mathbb{P}\in \bar{\mathcal{F}}_1} \mathbb{E}_{\mathbb{P}} [(\bar{X} -q)_+]$ with the
 ambiguity set $$\bar{\mathcal{F}}_1=\left\{\mathbb{P}\in \mathbb{M}(\mathbb{R}_+) : \;
\mathbb{E}_{\mathbb{P}} [\bar{X}] = \mu_1+\mu_2,  \;\;
\mathbb{E}_{\mathbb{P}}  [\bar{X}^2] = a\mu_1^2+b\mu_2^2+2c\mu_1\mu_2 
\right\}.$$
We can verify that the optimal value under condition 6 is the same as that in this univariate pooling problem via (\ref{eq: scarfOptQLg}) in Lemma \ref{scarflemma}.

Secondly, the optimal value under condition 2 and that under condition 3 are symmetric through replacing $b$ by $a$ and $\mu_1$ by $\mu_2$. 
This symmetric property also applies to the optimal values under condition 4 and condition 5, while the optimal values under condition 1 and condition 6 are self-symmetric. 

With respect to the optimal distributions, from Lemma B.2.2 and Lemma B.2.4 in Appendix B, we can see that the optimal distributions under condition 2 and condition 4 share the same formulation, though their optimal values are different. Specifically, both optimal distributions are in the form of
 $$ \begin{cases}
	x^{(1)}=(q-Q_b,0)  & \text{w.p. } \, p_1=\frac{b-1}{2b}+\frac{q(b-1)+\mu_1(c-b)}{2bQ_b};\\
	x^{(2)}=(q+Q_b,0) & \text{w.p. }  \, p_2=\frac{b-1}{2b}-\frac{q(b-1)+\mu_1(c-b)}{2bQ_b};\\
	x^{(3)}=(c\mu_1, b\mu_2) & \text{w.p. } \, p_3=\frac{1}{b}.\\
	\end{cases}$$
To illustrate, we plot the optimal distribution under condition 2 in Figure \ref{fig:worstcase1}. 
\begin{figure}[htbp]
    \centering
   \subfigure[Condition 2]{
\includegraphics[width=0.4\columnwidth]{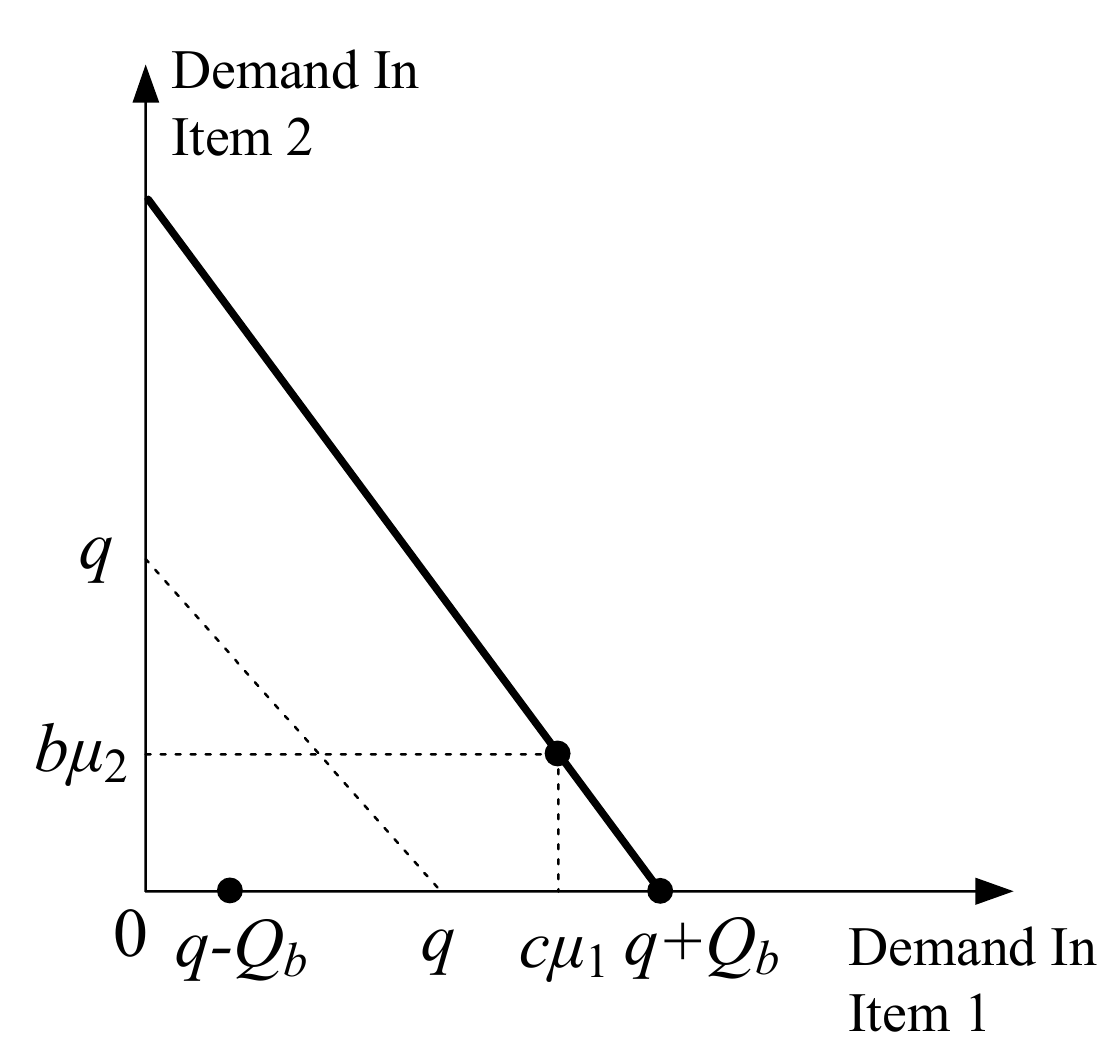}
\label{fig:worstcase1}
   } 
    \subfigure[Condition 4]{ \includegraphics[width=0.4\columnwidth]{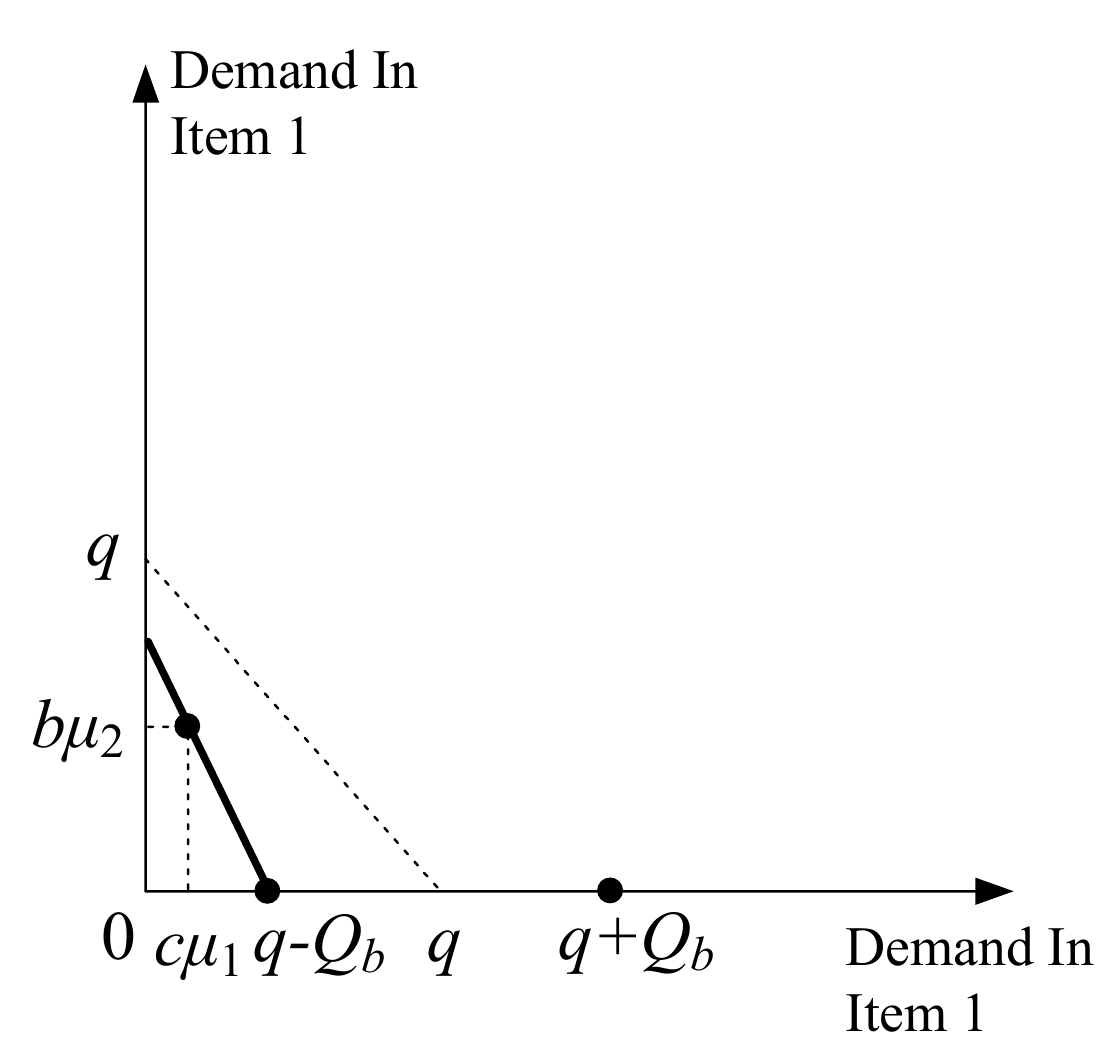}
    \label{fig:worstcase2}}
    \caption{The Three Support Points under Conditions 2 and 4}
\end{figure}
The points on the solid line together with $(q-Q_b,0)$ are potential points in the optimal support, as they satisfy the complementary slackness conditions. In addition, condition 2 indicates that
$c\mu_1+b\mu_2 -q\geq Q_b >0$,
so the objective value at point $x^{(3)}$ is strictly positive with $\left(x_1^{(3)}+x_2^{(3)}-q \right)_+=c\mu_1+b\mu_2-q>0$. 
We also plot the optimal distribution under condition 4 in Figure \ref{fig:worstcase2}. This condition indicates that
$c\mu_1+b\mu_2 -q\leq -Q_b <0$,
which implies a zero objective value at $x^{(3)}$. In short, $x^{(3)}$ affects the objective value under condition 2 but not under condition 4. 

In fact, the optimal distributions in other conditions can also be interpreted similarly.
Note that the optimal distributions under conditions 3 and 5 also share the same formulation, as they are symmetric to those under condition 2 and condition 4 respectively. Furthermore, notice that the optimal distribution under condition 2 becomes invalid under condition 1, because $x^{(1)}_1=q-Q_b<0$, which contradicts the non-negativity of random variables. When $q$ shrinks from above $Q_b$ to below $Q_b$, condition 2 is switched to condition 1, which is consistent with the illustration in Figure \ref{fig:split} with $q$ crossing $\underline{A}$. Consequently,  the optimal distribution under condition 1 always takes $(0,0)$ in its optimal support from Lemma A.1. Lastly, as the optimal value under condition 6 is the same as that in the univariate pooling problem, it is not surprising to see each support point $x$ of optimal distributions lay on either $x_1+x_2=q-Q_c$ or $x_1+x_2=q+Q_c$ from Lemma B.2.6, which is consistent with the supporting points of the univariate pooling problem via (\ref{eq: scarfOptQLg}) in Lemma \ref{scarflemma}.

\subsection{Extension of the Loss Function}
Beyond the two-piecewise linear objectives, recent literature also considers the multi-piece quadratic loss functions in newsvendor models \cite{ghosh2022non,ghosh2021new}. Specifically, quadratic losses are used to measure the cost severity of critical perishable commodities \cite{ghosh2022non}. Moreover, such multi-piece quadratic functions are also employed in the robust portfolio selection problem
to represent the lower partial moment as a measure of risk \cite{chen2011tight}.



In this subsection, we study the bivariate moment problem under half-space support with a multi-piece quadratic objective function.
As it is computationally challenging to obtain such a solution in closed form, we propose a numerical approach to solve this problem. Specifically, we provide a semi-definite programming (SDP) formulation by applying the sum-of-squares technique \cite{powers1998algorithm} to our dual problem. We relegate the proof to Appendix C.

\begin{proposition}
Given a random vector $\boldsymbol{X} \in \mathbb{M} (\mathbb{R}^2)$ with its ambiguity set $\F(\theta)$ defined in (\ref{eq: amSet}) and a matrix of coefficients $W \in \mathbb{R}^{6 \times K}$ for a $K$ piece-wise quadratic objective, we denote each piece as follows:
$$\ell_{k}(\boldsymbol{X})=w_{1k} + w_{2k} X_1 + w_{3k}X_2 +w_{4k} X_1^2 + w_{5k} X_2^2 + w_{6k} X_1X_2$$
for $k \in \{1,...,K\}$. The following bivariate moment problem 
$$\sup_{\mathbb{P}\in \mathcal{F}(\theta)}\ \mathbb{E}_{\mathbb{P}} \left[\max_{k=1, ..., K} \left\{ 
 \ell_{k}(\boldsymbol{X}) \right\} \right]$$
can be solved by an SDP:
\begin{equation*}
\begin{split}
\inf_{\boldsymbol{z},{G},{H}}\quad &z_1 + \mu_1 z_2 + \mu_2 z_3 + \Sigma_{11} z_4 +  \Sigma_{22} z_5 + \Sigma_{12} z_6  \\
\mathrm{s.t.}\quad &M \left(\boldsymbol{z} - \boldsymbol{w}^{(k)}, \boldsymbol{g}^{(k)}, \boldsymbol{h}^{(k)} \right) \succeq 0,\qquad k=1, ..., K
\end{split}
\end{equation*}
with $\boldsymbol{z} \in  \mathbb{R}^{6}, {G} \in \mathbb{R}^{3 \times K}, {H} \in \mathbb{R}^{3 \times K}$ and the matrix  $M(\tilde{\boldsymbol{z}},\boldsymbol{g},{\boldsymbol{h}})$ defined as follows:
\begin{equation*}
\begin{split}
	 \begin{pmatrix}
	\tilde{z}_4 & 0 & -{g}_1 & 0 & -{h}_1 & -{g}_2 \\
	0 & \tilde{z}_6 +2{g}_1 & 0 & {h}_1 & -{h}_2 & -h_3 \\
	-{g}_1 & 0 & \tilde{z}_5 & {h}_2 & 0 & -{g}_3 \\
	0 & {h}_1 & {h}_2 & \tilde{z}_2 + 2{g}_2 & {h}_3 & 0 \\
	-{h}_1 & -{h}_2 & 0 & {h}_3 & \tilde{z}_3 + 2{g}_3 & 0 \\
	-{g}_2 & -{h}_3 & -{g}_3 & 0 & 0 & \tilde{z}_1 
	\end{pmatrix}
\end{split}
\end{equation*}
where $\boldsymbol{w}^{(k)}, \boldsymbol{g}^{(k)}, \boldsymbol{h}^{(k)}$ are the $k$th column vectors of matrices $W$, ${G}$, ${H}$ respectively. 

\label{propositionSOS}
\end{proposition}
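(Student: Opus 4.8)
The plan is to prove the reformulation by conic (Lagrangian) duality followed by a sum-of-squares certificate that is \emph{exact} in the bivariate setting. Write $\phi(\boldsymbol{X})=(1,X_1,X_2,X_1^2,X_2^2,X_1X_2)^{T}$ and collect the prescribed moments into $\boldsymbol{m}=(1,\mu_1,\mu_2,\Sigma_{11},\Sigma_{22},\Sigma_{12})^{T}$, so that $\mathcal{F}(\theta)=\{\mathbb{P}\in\mathbb{M}(\mathbb{R}^2_+):\mathbb{E}_{\mathbb{P}}[\phi(\boldsymbol{X})]=\boldsymbol{m}\}$. First I would dualize the moment constraints to obtain the semi-infinite program
$$\inf_{\boldsymbol{z}\in\mathbb{R}^6}\Big\{\boldsymbol{z}^{T}\boldsymbol{m}\;:\;\boldsymbol{z}^{T}\phi(\boldsymbol{X})\ge\max_{k=1,\dots,K}\ell_k(\boldsymbol{X})\ \ \forall\,\boldsymbol{X}\in\mathbb{R}^2_+\Big\}.$$
Weak duality is immediate, since integrating this pointwise inequality against any feasible $\mathbb{P}$ gives $\boldsymbol{z}^{T}\boldsymbol{m}=\mathbb{E}_{\mathbb{P}}[\boldsymbol{z}^{T}\phi(\boldsymbol{X})]\ge\mathbb{E}_{\mathbb{P}}[\max_k\ell_k(\boldsymbol{X})]$. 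For equality I would invoke the standard strong-duality theorem for moment problems, which holds whenever $\boldsymbol{m}$ lies in the interior of the moment cone, i.e.\ under a Slater-type nondegeneracy condition (guaranteed, away from boundary cases, once $M\succ0$ in Assumption \ref{pro:1}).

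Next I would split the single semi-infinite constraint into $K$ constraints, one per piece, requiring that each quadratic $q_k(\boldsymbol{X}):=\boldsymbol{z}^{T}\phi(\boldsymbol{X})-\ell_k(\boldsymbol{X})=\textstyle\sum_{i=1}^{6}(z_i-w_{ik})\,\phi_i(\boldsymbol{X})$ be nonnegative on the nonnegative orthant $\mathbb{R}^2_+$. The key device is the substitution $X_1=s^2,\ X_2=t^2$: as $(s,t)$ ranges over $\mathbb{R}^2$ the point $(X_1,X_2)$ sweeps out all of $\mathbb{R}^2_+$, so $q_k\ge0$ on $\mathbb{R}^2_+$ if and only if the bivariate quartic $\widehat{q}_k(s,t):=q_k(s^2,t^2)$ is nonnegative on $\mathbb{R}^2$. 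At this point I would apply Hilbert's theorem that every nonnegative bivariate quartic is a sum of squares, which turns nonnegativity into an \emph{exact} SOS condition rather than a mere relaxation; this equivalence is precisely what confines the clean reformulation to the case $n=2$.

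Finally I would encode the SOS condition as the stated linear matrix inequality. Since every quadratic in $(s,t)$ lies in the span of $\boldsymbol{b}(s,t)=(s^2,st,t^2,s,t,1)^{T}$, $\widehat{q}_k$ is SOS exactly when there exists a symmetric $G\succeq0$ with $\widehat{q}_k(s,t)=\boldsymbol{b}(s,t)^{T}G\,\boldsymbol{b}(s,t)$. Matching the coefficients of the fifteen monomials of degree at most four imposes fifteen linear conditions on the twenty-one-dimensional space of symmetric $6\times6$ matrices: the six ``even'' coefficients are pinned to $z_i-w_{ik}$, while the nine odd/mixed coefficients are forced to vanish. The admissible Gram matrices thus form a six-dimensional affine family, whose free (kernel) directions I would parametrize by $\boldsymbol{g}^{(k)}=(g_1,g_2,g_3)$ and $\boldsymbol{h}^{(k)}=(h_1,h_2,h_3)$. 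Expanding $\boldsymbol{b}\boldsymbol{b}^{T}$ shows this family is precisely $M(\boldsymbol{z}-\boldsymbol{w}^{(k)},\boldsymbol{g}^{(k)},\boldsymbol{h}^{(k)})$: for instance the coefficient of $s^2t^2$ is $(\widetilde z_6+2g_1)+2(-g_1)=\widetilde z_6$, and the spurious odd monomials $s^2t$, $st^2$, $st$ cancel through the entries carrying $h_1,h_2,h_3$. Imposing $M(\boldsymbol{z}-\boldsymbol{w}^{(k)},\boldsymbol{g}^{(k)},\boldsymbol{h}^{(k)})\succeq0$ for all $k$ and minimizing $\boldsymbol{z}^{T}\boldsymbol{m}$ then yields the SDP.

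The main obstacle is twofold. The essential point is the exactness of the SOS step: it is Hilbert's theorem (nonnegative bivariate quartics are sums of squares), applied after the $(s^2,t^2)$ substitution, that makes the PSD condition equivalent to — not merely sufficient for — nonnegativity on $\mathbb{R}^2_+$, whereas in higher dimensions this fails and the analogous SDP is only a relaxation. The secondary technical point is verifying strong duality under the appropriate regularity condition, so that the primal value is actually attained by the dual SDP. Given these, identifying the parametrized Gram matrix $M$ (that is, locating the six kernel directions $\boldsymbol{g},\boldsymbol{h}$) is a direct, if slightly tedious, expansion.
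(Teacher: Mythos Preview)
Your proposal is correct and follows essentially the same route as the paper: dualize to a semi-infinite program, substitute $X_1=s^2,\ X_2=t^2$ to turn nonnegativity on $\mathbb{R}_+^2$ into global nonnegativity of a bivariate quartic, invoke Hilbert's theorem to replace this by an SOS certificate, and then parametrize the resulting Gram matrices. The paper's proof is terser (it cites Shapiro's strong-duality theorem rather than spelling out the Slater condition, and it does not write out the dimension count for the kernel directions), but the logical skeleton is identical to yours.
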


As far as we know, it is mathematically challenging to obtain an exact reformulation for piece-wise polynomial objectives with either a higher degree or more than two variables. This is because the sum-of-squares technique may no longer apply in these settings.
Specifically, each dual constraint of our problem is to ensure that the corresponding polynomial is non-negative. 
Indeed, our analysis uses the result by Hilbert in 1888, who
showed in \cite{hilbert1888darstellung} that every non-negative polynomial with $n$ variables and degree $2d$ could be represented as sums of squares of other polynomials if and only if either $n = 1$ or $2d = 2$ or $n = 2$ and $2d = 4$. Note that the formulation of our proposition is a bivariate polynomial ($n=2$) with fourth-order
($2d=4$). Therefore, the sum of squares technique can be applied. 
Similar challenges for a higher degree or more than two variables are also presented by Bertsimas and Popescu, who briefly review the computational tractability in moment problems \cite{bertsimas2005optimal}.

\section{The Bivariate Newsvendor Problem} \label{section: Impact of Pooling}

 In the previous section, we provided a closed-form solution for the inner bivariate moment problem. In this section, we study the outer DRO problem. Specifically,
 we consider the DRO newsvendor problem as follows:
 \begin{equation}
 \label{eq:bivariateNewsvendor}
\inf_{q\geq 0} \left\{\sup_{\mathbb{P}\in \mathcal{F}(\theta)}\ \mathbb{E}_{\mathbb{P}} \left[\left(X_1+X_2 -q \right)_+\right] + (1-\eta)q \right\},
 \end{equation} 
where $\mathcal{F}(\theta)$ is the mean-covariance ambiguity set in (\ref{eq: amSet}), and the critical ratio $\eta$ is a given constant in $(0,1)$. 

In the following, we first propose an approach to solve this problem in Section \ref{sec:framwork}, which also reveals the relation between the optimal order $q^*$ and the moment parameter $\theta$ in an analytical form. In Section \ref{subsection: The Effects of Inventory Pooling}, we analyze the benefit of incorporating the covariance information into the ambiguity set, as well as the impact on the total inventory. In Section \ref{sec: bpm vs upm}, we show that it is important to keep the covariance information instead of compressing such information into one dimension.


\subsection{Optimal Order Quantity}\label{sec:framwork}
We introduce the following procedure in the box below to solve problem (\ref{eq:bivariateNewsvendor}). 
In terms of the computational techniques to solve $q^*_i$ in (\ref{eq: localQ}), 
we first note that each $A_i$ essentially corresponds to an interval of $q$, which can be solved from a quadratic equation. As a result, we obtain each $A_i$ in a closed form shown in Table B.2 in Appendix B.1.
Furthermore, we note that
$v_p(q;\theta) + (1-\eta)q$ is the sum of a linear term and a square root of a quadratic term of $q$, which is strictly convex in $q$. Therefore, it is not difficult to obtain the unique stationary points of (\ref{eq: localQ}) shown in Appendix E.
Since an optimal solution locates at either a stationary point or a boundary point of the interval $A_i$, 
(\ref{eq: localQ}) can be solved explicitly. To summarize, this framework is capable of solving the optimal order $q^*$ systematically.

\begin{shadedbox}
  1. Identify local solutions separately: for each condition $i$ of the six conditions in Theorem \ref{thm: vp}, solve 
		\begin{equation} \label{eq: localQ}
		q_i^* = \mathop{\arg\min}_{q\in A_i} \left\{v_p(q;\theta) + (1-\eta)q \right\}   
		\end{equation}
		where $A_i=\{q \in \R_+: q \text{ satisfies condition } i\}$.
  
  \noindent
  2. Obtain global solutions jointly: solve
  $$
		q^* = \mathop{\arg\min}_{q\in \{q_1^*, ..., q_6^* \}} \left\{v_p(q;\theta) + (1-\eta)q\right\}.
  $$
\end{shadedbox}

\subsection{The Effects of Inventory Pooling}
\label{subsection: The Effects of Inventory Pooling}
Inventory pooling is frequently employed as an operational strategy to mitigate demand uncertainty: combining inventory allows the company to decrease demand variability, cut operational costs, and boost profits, especially if the component market demands are negatively correlated \cite{swinney2012inventory}. The pooling strategy often results in a centralized inventory system \cite{bimpikis2016inventory,eppen1979note,su2008bounded}. We illustrate the difference between centralized and decentralized structures of inventory systems through Figure \ref{fig:twostructures}. In Figure \ref{centralized}, we show a single supplier serving two demand streams, which we call the centralized system; in Figure \ref{decentralized}, we show two suppliers deciding order quantities separately, which we call the decentralized system.
\begin{figure}[htbp]
    \centering
   \subfigure[Centralized System]{
    \begin{minipage}[t]{0.35\linewidth}
        \centering
\includegraphics{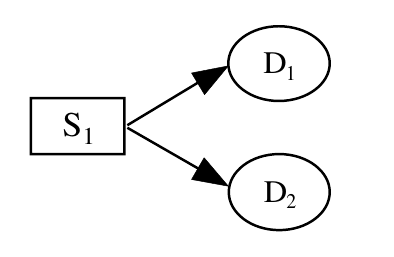}
\label{centralized}
    \end{minipage}
   }   
    \subfigure[Decentralized System]{ \begin{minipage}[t]{0.35\linewidth}
        \centering
\includegraphics{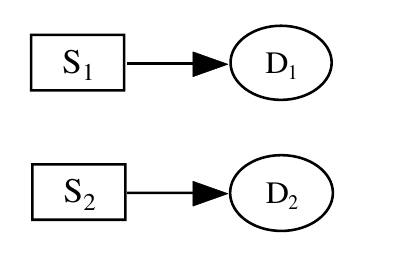}
    \label{decentralized}
    \end{minipage}}
    \caption{Two Structures of Inventory Systems}
    \label{fig:twostructures}
\end{figure}


From the perspective of pooling, we consider our model in (\ref{eq:bivariateNewsvendor}) as a \text{\it bivariate centralized model} (BCM). In the following, we compare the centralized model with the decentralized model. We first denote individual ambiguity sets of demands $X_1$ and $X_2$ by 
$$\mathcal{F}_1(\theta)=\left\{\mathbb{P}\in \mathbb{M}(\mathbb{R}_+) : \mathbb{E}_{\mathbb{P}} [X_1] = \mu_1, \mathbb{E}_{\mathbb{P}} [X_1^2] = \Sigma_{11} \right\},$$
$$\mathcal{F}_2(\theta)=\left\{\mathbb{P}\in \mathbb{M}(\mathbb{R}_+) : \mathbb{E}_{\mathbb{P}} [X_2] = \mu_2, \mathbb{E}_{\mathbb{P}} [X_2^2] = \Sigma_{22} \right\},$$
as the marginal ambiguity sets of $\mathcal{F}(\theta)$ in (\ref{eq:bivariateNewsvendor}). Note that these marginal ambiguity sets neglect the covariance information in the original ambiguity set $\mathcal{F}(\theta)$.
Now we are ready to propose the \text{\it bivariate decentralized model} (BDM) as follows: 
\begin{equation} 
\label{eq: BDM}
\inf_{q_1,q_2\geq 0} \left\{v_1(q_1;\theta) + v_2(q_2;\theta)+ (1-\eta)(q_1+q_2) \right\},   
\end{equation}
where $v_1(q_1;\theta)$ and $v_2(q_2;\theta)$ are the optimal values of 
$$\sup_{\mathbb{P}\in \mathcal{F}_1(\theta)} \mathbb{E}_{\mathbb{P}} [(X_1 -q_1)_+] \;\; \text{ and }
\sup_{\mathbb{P}\in \mathcal{F}_2(\theta)} \mathbb{E}_{\mathbb{P}} [(X_2 -q_2)_+] \;, $$ respectively. Note that we can decompose the BDM to be the sum of two marginal univariate DRO problems, i.e., 
\begin{equation*} 
\inf_{q_1 \geq 0} \left\{v_1(q_1;\theta) + (1-\eta)q_1 \right\} + 
\inf_{q_2 \geq 0} \left\{v_2(q_2;\theta) + (1-\eta)q_2 \right\}.   
\end{equation*}
Therefore, the optimal solution of the BDM can be obtained 
through applying Scarf's result \cite{scarf1958min} on each individual univariate problem with 
\begin{equation*} 
q_i^*=\left\{
 \begin{array}{ll}
        \mu_i+\frac{\sqrt{\Sigma_{ii}-\mu_i^2}}{2}\frac{2\eta-1}{\sqrt{\eta(1-\eta)}}, & \text{if } \frac{ \Sigma_{ii}-\mu_i^2}{\Sigma_{ii}}<\eta <1;\\
        0, & \text{if } 0 \leq \eta \leq \frac{ \Sigma_{ii}-\mu_i^2}{\Sigma_{ii}} ,
        \end{array} \right.   
\end{equation*}
for $i=1,2$.

In the rest of this subsection, we compare the optimal values and optimal solutions between the BCM and the BDM. By doing so, we demonstrate the role of the correlation coefficient $\rho$ in the BCM over various critical ratios $\eta$. With regard to the optimal values, the BCM always outperforms the BDM. Intuitively, the BDM is inclined to adopt an overly conservative decision, since this model neglects the correlation between demands. As an illustration, we plot the relative gap of optimal values between the BCM and the BDM over correlation coefficient $\rho$ and critical ratio $\eta$ in Figure \ref{fig:comparison1_O}, when fixing the other moment information with $\mu_1=1, \mu_2=1, a=2, b=6$. Specifically, this relative gap is defined as the ratio:
$$\kappa(\rho,\eta)=\frac{V_{\text{BDM}}(\theta)-V_{\text{BCM}}(\theta)}{V_{\text{BCM}}(\theta)},$$
where $V_{\text{BCM}}$ and $V_{\text{BDM}}$ are optimal values of the BCM and the BDM, respectively.
 \begin{figure}[h]
	\centering
	\includegraphics[scale=0.5]{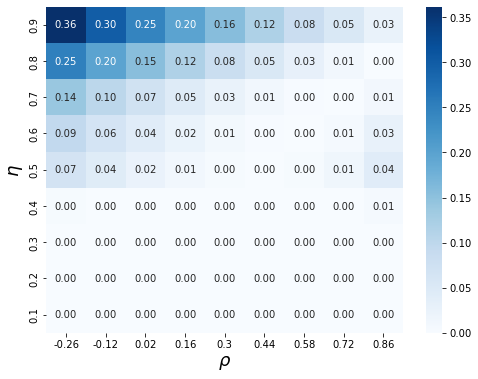}
	\caption{Relative Gap between BCM and BDM}
	\label{fig:comparison1_O}
\end{figure}
In Figure \ref{fig:comparison1_O}, it is not surprising to see $\kappa(\rho,\eta)$ is relatively large when $\rho$ is small. This phenomenon is consistent with the well-known 
\text{\it pooling effect}, i.e.,
inventory pooling can significantly reduce operational costs when demands are negatively correlated. Moreover, when $\eta$ is small, both models tend to accept a zero inventory so that $\kappa(\rho,\eta)$ is zero as expected.

An unexpected observation is that the BCM also enjoys an advantage when $\rho$ is close to 1 and $\eta$ is in the intermediate range. This is due to another effect  
which we call
the \text{\it shifting effect}. Before introducing the shifting effect, we first illustrate this observation with a concrete example through further fixing $\eta=0.5$. In such a case, we obtain $V_{\text{BDM}}=2$. For $V_{\text{BCM}}(\rho)$, by the method introduced in Section \ref{sec:framwork}, we have
 \begin{equation*}
     \begin{split}
         V_{\text{BCM}}(\rho) = \frac{1}{5} \sqrt{5-5\rho^2} + \frac{\sqrt{5}}{10} \rho + \frac{3}{2}.
     \end{split}
 \end{equation*}
 As shown in Figure \ref{fig:eta=0.5},
  \begin{figure}[htb]
	\centering
	\includegraphics[scale=0.45]{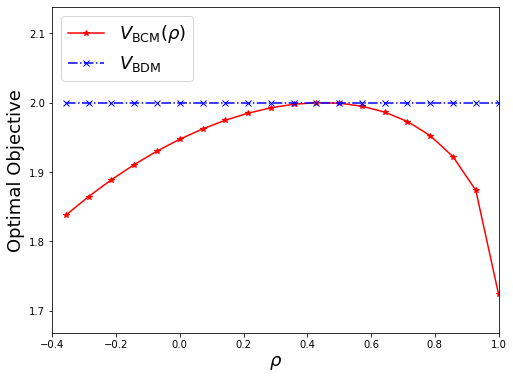}
	\caption{$V_{\text{BDM}}$ and $V_{\text{BCM}}(\rho)$ when $\eta = 0.5$.}
	\label{fig:eta=0.5}
\end{figure}
$V_{\text{BCM}}(\rho)$ is a concave function on $[-\sqrt{5}/5, 1]$\footnote{If $\rho<-\sqrt{5}/5$, then Assumption \ref{pro:1} will be violated.}  with  the optimal value $V_{\text{BCM}}(\rho^*)=2$ obtained at $\rho^*=\sqrt{5}/5$. The increase of $V_{\text{BCM}}(\rho)$ over $\rho\in [-\sqrt{5}/5, \sqrt{5}/5 ]$, as expected, is because of the pooling effect. On the other hand, the shifting effect dominates the pooling effect over $\rho\in (\sqrt{5}/5, 1]$, resulting in
a decrease in the objective function of $V_{\text{BCM}}(\rho)$. To explain the shifting effect, we first introduce the following proposition with $\rho=1$, whose proof is relegated to Appendix D.
\begin{proposition}\label{p3}
 Suppose that two nonnegative random variables $X_1$ and $X_2$ satisfy
 $$\mathbb{E} [X_1] = \mu_1, \mathbb{E}[X_2] = \mu_2, \mathbb{E}[X_1^2] = a\mu_1^2, \mathbb{E} [X_2^2]=b\mu_2^2,$$
 and assume $1 \leq a \leq b$ without loss of generality. 
 If $X_1$ and $X_2$ are perfectly correlated with $\rho=1$, then it must be $X_1 \geq \left(1-\sqrt{\frac{a-1}{b-1}}\right) \mu_1 \geq 0$.
\end{proposition}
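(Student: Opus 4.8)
The plan is to exploit the fact that perfect correlation $\rho=1$ forces an almost-sure affine relationship between $X_1$ and $X_2$, and then to impose the nonnegativity of $X_2$ to obtain a lower bound on $X_1$. The whole argument is essentially the equality case of the Cauchy--Schwarz inequality followed by one rearrangement.

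First I would convert the given second moments into variances. Since $\mathbb{E}[X_1]=\mu_1$ and $\mathbb{E}[X_1^2]=a\mu_1^2$, the variance of $X_1$ equals $(a-1)\mu_1^2$, so its standard deviation is $\sigma_1=\sqrt{a-1}\,\mu_1$; symmetrically $\sigma_2=\sqrt{b-1}\,\mu_2$. Here I use that $\mu_1,\mu_2>0$ in the model, and I would dispose of the degenerate case $a=1$ separately: then $\sigma_1=0$, so $X_1=\mu_1$ almost surely, the correlation $\rho$ is undefined, and the claimed bound reduces to $X_1\ge\mu_1$, which holds trivially.

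Next I would invoke the equality characterization in Cauchy--Schwarz. Letting $Y_i=(X_i-\mu_i)/\sigma_i$ denote the standardized variables, each has mean $0$ and variance $1$, and $\mathrm{Corr}(X_1,X_2)=\rho=1$ means $\mathrm{Var}(Y_1-Y_2)=2-2\rho=0$ while $\mathbb{E}[Y_1-Y_2]=0$; hence $Y_1=Y_2$ almost surely. Unwinding the standardization yields the almost-sure identity
$$X_2=\mu_2+\frac{\sqrt{b-1}\,\mu_2}{\sqrt{a-1}\,\mu_1}\,(X_1-\mu_1).$$

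Finally I would impose $X_2\ge 0$ almost surely on this identity. Dividing by $\mu_2>0$ and solving for $X_1$ gives the pointwise bound $X_1\ge\big(1-\sqrt{(a-1)/(b-1)}\big)\mu_1$, and since $a\le b$ forces $\sqrt{(a-1)/(b-1)}\le 1$, the right-hand side is nonnegative, which completes the proof. I do not expect a serious obstacle in any single step; the only points requiring care are the justification that $\rho=1$ produces an \emph{exact} almost-sure linear relation (the zero-variance argument above, rather than mere approximate proportionality) and the bookkeeping of the inequality direction when solving $X_2\ge 0$ for $X_1$—the coefficient multiplying $X_1-\mu_1$ is strictly positive, so dividing by it preserves the direction of the inequality.
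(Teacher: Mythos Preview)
Your proposal is correct and follows essentially the same route as the paper: both arguments use that $\rho=1$ forces an almost-sure affine relation between $X_1$ and $X_2$, identify the coefficients from the given moments, and then invoke $X_2\ge 0$ together with $a\le b$ to deduce the lower bound on $X_1$. The only cosmetic differences are that the paper cites a textbook characterization of $|\rho|=1$ whereas you give the zero-variance argument directly, and the paper writes $X_1$ as an affine function of $X_2$ rather than the reverse.
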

From this proposition, we can see that a perfect correlation with $\rho=1$ still benefits the BCM by shifting the restriction of uncertain demand $X_1$ from $X_1 \geq 0$ to $X_1 \geq \left(1-\sqrt{(a-1)/(b-1)}\right) \mu_1 $, which is considered as the shifting effect. Consequently, this effect tightens the ambiguity set and thus results in a less conservative decision.
\begin{figure}[htbp]
    \centering
   \subfigure[The Value of $\max_{F\in \mathcal{F}(\theta)}\mathbb{P}(X_1\leq \xi)$]{
    \begin{minipage}[t]{0.45\linewidth}
        \centering
\includegraphics[scale=0.45]{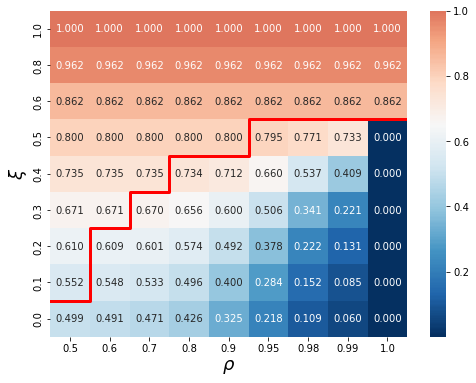}
\label{fig:PrX1LeqXi}
    \end{minipage}
   }   
    \subfigure[The Value of $\max_{F\in \mathcal{F}_1(\theta)}\mathbb{P}(X_1\leq \xi)$ ]{ \begin{minipage}[t]{0.45\linewidth}
        \centering
\includegraphics[scale=0.45]{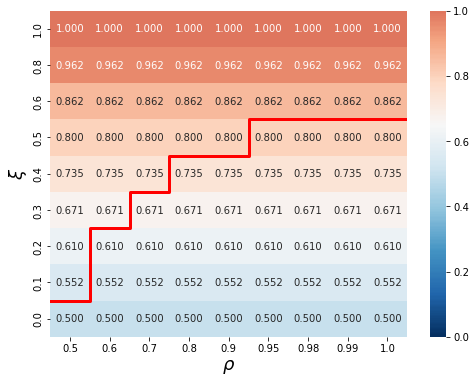}
    \label{fig:PrX1LeqXi1}
    \end{minipage}}
 \caption{Shifting Effect}
\label{fig: shifting effect}
\end{figure}

To further illustrate the shifting effect, we note that a large $\rho$ benefits the BCM by reducing the probability of occurrences of a small demand realization, thus tightening the ambiguity set and making the decision less conservative. 
As an illustration, we plot the upper bound of probability $\mathbb{P}(X_1\leq \xi)$ over correlation $\rho$ in Figure \ref{fig: shifting effect}, when fixing the same parameters with $\mu_1=1, \mu_2=1, a=2, b=6$. On the upper left of the red dividing line, the values of $\max_{F\in\mathcal{F}(\theta)}\mathbb{P}(X_1\leq \xi)$ remain the same as $\max_{F\in\mathcal{F}_1(\theta)}\mathbb{P}(X_1\leq \xi)$. On the lower right of this red line, we observe that $$\max_{F\in\mathcal{F}(\theta)}\mathbb{P}(X_1\leq \xi)<\max_{F\in\mathcal{F}_1(\theta)}\mathbb{P}(X_1\leq \xi),$$
and the shifting effect takes place. 
Besides, for  $\xi\leq 0.5$, the shifting effect keeps getting stronger as the value of $\rho$ increases. In the extreme case when $\rho=1$ and $\xi\leq 0.5<1-\sqrt{0.2}$,
Figure \ref{fig:PrX1LeqXi} shows that $\max_{F\in\mathcal{F}(\theta)}\mathbb{P}(X_1\leq \xi)=0$, which verifies Proposition \ref{p3}.

Another counter-intuitive observation is that a centralized system does not necessarily reduce the optimal total order quantity, 
\begin{figure}[htbp]
	\centering
	\includegraphics[scale=0.55]{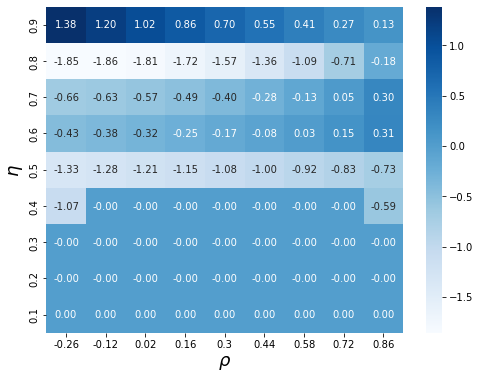}
	\caption{Gap of Optimal Orders: $q^*_{\text{BDM}}-q^*_{\text{BCM}}$ }
	\label{fig:comparision1_q}
\end{figure}
i.e., the optimal order quantity $q^*_{\text{BCM}}$ may be larger than $q^*_{\text{BDM}}$ with $q^*_{\text{BDM}}=q^*_{1 \;\text{BDM}}+q^*_{2 \;\text{BDM}}$. 
We illustrate this observation in Figure \ref{fig:comparision1_q} with the same set of moment parameters as in Figure \ref{fig: shifting effect}.
In Figure \ref{fig:comparision1_q}, a very large $\eta$ (e.g., $\eta\geq0.9$) results in the worst-case demand distribution taking condition 6 in Theorem \ref{thm: vp}. In such a case, the conventional pooling effects dominate \cite{bimpikis2016inventory, eppen1979note}. For an instance with $\eta=0.9$, $\rho=0.3$, the centralized optimal order $q^*_{\text{BCM}}=5.61$, whereas the decentralized orders are $q^*_{1 \;\text{BDM}}=2.33$ and $q^*_{2 \;\text{BDM}}=3.98$ respectively and pooling indeed reduces the optimal total order quantity. However, for a medium-large $\eta$, the individual optimal order quality could become 0 under sufficiently large demand uncertainty. For an instance with $\eta=0.7, \rho=0.3$, the centralized optimal order $q^*_{\text{BCM}}=1.84$, whereas the decentralized orders are $q^*_{1 \;\text{BDM}}=1.44$ and $q^*_{2 \;\text{BDM}}=0$ respectively. In some sense, demand pooling can reduce demand variability and smooth the change of the optimal orders along with different critical ratios $\eta$, and during this process, the optimal total order quantity under the centralized system may be larger than that under the decentralized system.

\subsection{The Effects of Ambiguity Pooling} \label{sec: bpm vs upm}

For bivariate moment problems, a straightforward technique of relaxation, often considered in the literature, is to model the problem as a univariate problem \cite{tian2008moment}. Such relaxation
can simplify the problem definition, reduce the solution complexity, and ease the analysis. 
More precisely,
this technique of relaxation results in a {\it univariate centralized model} (UCM) as follows:
\begin{equation} \label{eq:UCM}
\inf_{q\geq 0} \left\{\sup_{\mathbb{P}\in \bar{\mathcal{F}}(\theta)} \mathbb{E}_{\mathbb{P}} [(\bar{X}-q)_+] + (1-\eta)q \right\} \end{equation}
where we treat $\bar{X}=X_1+X_2$
with $\bar{\mu}=\mu_1+\mu_2$, $\bar{\Sigma}=\Sigma_{11}+2\Sigma_{12}+\Sigma_{22}$, and define
$$\bar{\mathcal{F}}(\theta) = \{\mathbb{P}\in \mathbb{M}(\mathbb{R}_+) : \mathbb{E}_{\mathbb{P}} [\bar{X}] = \bar{\mu}, \mathbb{E}_{\mathbb{P}} \left[\bar{X}^2 \right] =\bar{\Sigma} \}.$$
Unlike the BDM which neglects the covariance, the UCM compresses the uncertainty of mean and covariance into information in the one-dimensional space. We call such a maneuver 
{\it ambiguity pooling}.

In the rest of this subsection, we demonstrate the effects of ambiguity pooling by comparing the optimal value and ambiguity sets of the BCM and the UCM. In Figure \ref{fig:comparison3_O}, we plot the relative gap between the optimal values through fixing the same part of moment parameters with $\mu_1=1, \mu_2=1, a=2, b=6$.
\begin{figure}[htbp]
	\centering
	\includegraphics[scale=0.5]{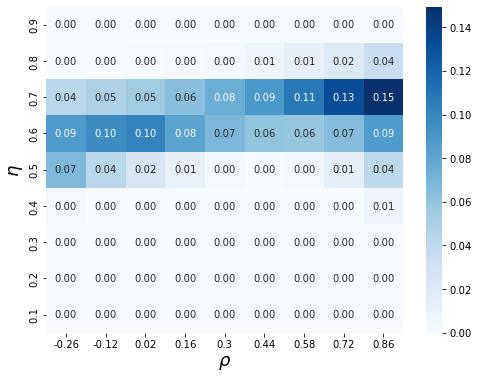}
	\caption{Relative Gap between BCM and UCM}
	\label{fig:comparison3_O}
\end{figure}
As shown in Figure \ref{fig:comparison3_O}, the BCM enjoys a $5\% - 15\%$ performance improvement with a medium to large $\eta$. It is not surprising to see that the relative gap is zero when $\eta$ is small or very large: when $\eta$ is small, both models will adopt a zero inventory, thus the gap is zero; when $\eta$ is very large, condition 6 in Theorem \ref{thm: vp} holds, resulting in the same optimal values for both models.

An essential observation is that the ambiguity pooling process may lead to a loss of information on the uncertainty
under half-space support. To be precise, we consider the following ambiguity set
\begin{equation*} 
\hat{\F}(\theta) = \left\{\mathbb{P}\in \mathbb{M}(\mathbb{R}^2) :  \;
\mathbb{E}_{\mathbb{P}} [X_1+X_2] = \bar{\mu},  \;\;  
\mathbb{E}_{\mathbb{P}} \left[(X_1+X_2)^2 \right] = \bar{\Sigma},\;\;
X_1+X_2 \geq 0
\right\}, 
\end{equation*}	
and the UCM can be reformulated as follows:
 \begin{equation*}
\inf_{q\geq 0} \left\{\sup_{\mathbb{P}\in \hat{\mathcal{F}}(\theta)}\ \mathbb{E}_{\mathbb{P}} \left[\left(X_1+X_2 -q \right)_+\right] + (1-\eta)q \right\}.
 \end{equation*} 
 It is obvious that $\F(\theta) \subset\hat{\F}(\theta)$, which means the ambiguity pooling process potentially results in a more conservative decision. Next,
 we shall provide an example to demonstrate that the optimal distribution of problem $\sup_{\mathbb{P}\in \hat{\F}} \mathbb{E}_{\mathbb{P}} [(X_1+X_2-q)_+]$ does not belong to $\F$; in other words, $\F(\theta)$ is a proper subset of $ 
 \hat{\F}(\theta) $. We take $\mu_1=1, \mu_2=1, a=2, b=6,c=1$ as an example, in which $\bar{\mu}=2$ and $\bar{\Sigma}=10$. Based on the Scarf bound in Lemma 1, a class of optimal distributions for problem $\sup_{\mathbb{P}\in \hat{\F}} \mathbb{E}_{\mathbb{P}} [(X_1+X_2-q)_+]$ with total $K+1$ mass points can be described as follows:
\begin{align*}
\hat{\mathbb{P}}^*=\begin{cases}
(0,0),\,& \text{w.p. } p_1=1-\frac{\bar{\mu}^2}{\bar{\Sigma}}=0.6\\
(x_1^{(k)},x_2^{(k)}),& \text{w.p. } p_2^{(k)},\; \text{ for all }\, k=1,...,K
\end{cases}
\end{align*}
with $\sum_{k=1}^K p_2^{(k)}=\frac{\bar{\mu}^2}{\bar{\Sigma}}=0.4$ and $x_1^{(k)}+x_2^{(k)}=\frac{\bar{\Sigma}}{\bar{\mu}}=5$ for all $k=1,...,K$.
To show $\hat{\mathbb{P}}^* \notin \mathcal{F}$ by contradiction, we first assume $\hat{\mathbb{P}}^*\in \mathcal{F}$. Thus, the following constraints must hold for $X_1$:
\begin{align*}
\mathbb{E}[X_1]=\sum_{k=1}^Kx_1^{(k)} p_2^{(k)}=\mu_1=1,\, \text{ and } \,
\mathbb{E}[X_1^2]=10-10+2=2.
\end{align*}
 As a result, the second moment of $X_2$ must satisfy
\begin{equation*}   
\mathbb{E}[X_2^2]=
\sum_{k=1}^K\left(\frac{\bar{\Sigma}}{\bar{\mu}}-x_1^{(k)}\right)^2p_2^{(k)}
=\frac{\bar{\Sigma}^2}{\bar{\mu}^2} \sum_{k=1}^K p_2^{(k)} -2\frac{\bar{\Sigma}}{\bar{\mu}} \mathbb{E}[X_1] + \mathbb{E}[X_1^2]
=\left(1-\frac{2\mu_1}{\bar{\mu}}\right)\bar{\Sigma}+\Sigma_{11}=2.
\end{equation*}
 This contradicts with $\mathbb{E}[X_2^2]=\Sigma_{22}=6$. Therefore, $\hat{\mathbb{P}}^* \notin \mathcal{F}$.


Lastly, we remark that the assumption on the half-space support plays an important role in the above result. 
If the half-space support is replaced by the full-space support for both the BCM and the UCM, then the two models will result in the same optimal values by the projection theorem in \cite{popescu2007robust}.

\section{Conclusion and Further Discussion}\label{sec:conclusion}
To summarize, we study a bivariate distributionally robust optimization problem with the mean-covariance ambiguity set and half-space support. For a class of widely-adopted objective functions in inventory management, option pricing, and portfolio selection, we obtain closed-
form tight bounds and optimal distributions of the inner problem. Furthermore, we show that under the distributionally robust setting,
a centralized inventory system does not always reduce the optimal total inventory. This
contradicts with the belief that a centralized inventory system can reduce the total
inventory. In addition, we identify two effects, a conventional pooling effect and a novel shifting
effect. Their combination determines the benefit of incorporating the covariance information
in the ambiguity set. Finally, we demonstrate the importance of keeping the covariance information in the ambiguity set instead of compressing the information
into one dimension through numerical experiments.

It is worth mentioning that our result of the bivariate moment problem is also useful to derive a closed-form upper bound of the multivariate moment problem $\sup_{\mathbb{P}\in \mathcal{F}}\ \mathbb{E}_{\mathbb{P}} \left[\ell\left( \boldsymbol{w}^T \boldsymbol{X} \right)\right]$ in (\ref{eq:innerWorstCaseExpectationProblem}) with $\ell (x) = \max\left\{u_1x +v_1, u_2x +v_2 \right\}$. Such multivariate problem is known to be computationally challenging, and Natarajan et al. provide a mathematically tractable upper bound for the expected piecewise linear utility function in a numerical way \cite{natarajan2010tractable}. In consideration of our results, we treat the $n$-dimensional random variable $\bX$ as consisting of a sequence of low-dimensional random variables $\bX_i$ with $\bX=(\bX_1,\bX_2,\ldots,\bX_I)$ and $\bX_i \in \R^{{n_i}}$. In turn, we can decompose the ambiguity set $\mathcal{F}$ into corresponding marginal ambiguity sets $\mathcal{F}_i$. Based on such a decomposition, we obtain an upper bound of the multivariate problem as follows:
$$
\sup_{\mathbb{P}\in \mathcal{F}}\ \mathbb{E}_{\mathbb{P}} \left[\ell\left( \boldsymbol{w}^T \boldsymbol{X} \right)\right]
\leq \sum_{i\in I}\sup_{\mathbb{P}_i\in \mathcal{F}_i} \mathbb{E}_{\mathbb{P}_i} \left[\max\{u_1\boldsymbol{w}_i^T{\boldsymbol{X}_i}+v_{1i}, u_2\boldsymbol{w}_i^T{\boldsymbol{X}_i}+v_{2i}\}\right]
$$
for all possible $v_{1i}$ and $v_{2i}$ satisfying $\sum_{i=1}^{I}v_{1i}=v_1$ and $\sum_{i=1}^{I}v_{2i}=v_2$. Therefore, if $n_i\leq 2$ for all $i$, we can provide a closed-form upper bound. In terms of numerical algorithms, we can incorporate $\sum_{i=1}^{I}v_{1i}=v_1$ and $\sum_{i=1}^{I}v_{2i}=v_2$ as two constraints to obtain a better upper bound. 
Note that the estimation of the $n \times n$ covariance matrix is usually unreliable in practice if $n$ is large enough. Our approach relies on only a number of marginal well-estimated $2 \times 2$ covariance submatrices, which is considered to be robust against data perturbations. On the other hand, the weakness of our approach is that we only incorporate $\lfloor n/2 \rfloor$ submatrices at most. The open question is how to make more use of the covariance information so as to provide a better bound for this multivariate moment problem.

Another possible research direction is to incorporate the shape of distributions including symmetry, unimodality, or convexity into the ambiguity set in order to provide less conservative decisions appropriate to their respective scenarios, which we also leave for further study.




\bibliographystyle{apalike} 
 \bibliography{cas-refs}






\newpage

\section*{Appendix A: Proof of Proposition 1}

    Rewrite the loss function
	\begin{align*}
	\ell(x) &= \max\{u_1x+v_1, u_2x+v_2 \} \\
	&= \max\{0, (u_2-u_1)x+v_2-v_1\} + (u_1x+v_1) \\
	&= \big((u_2-u_1)x+v_2 - v_1 \big)_+ + (u_1x+v_1) \\
	&= (u_2-u_1) \left(x+ \frac{v_2-v_1}{u_2-u_1} \right)_+ + (u_1x+v_1).
	\end{align*}
	Let $\tilde{\boldsymbol{X}} = \boldsymbol{w}\circ \boldsymbol{X}$, then $\mathbb{E}_{\mathbb{P}} [\tilde{\boldsymbol{X}}] = \boldsymbol{w}\circ \boldsymbol{\mu}$ and $\mathbb{E}_\mathbb{P} \big[\tilde{\boldsymbol{X}}\tilde{\boldsymbol{X}}^T \big] = \boldsymbol{w}\boldsymbol{w}^T\circ \Sigma$. Thus, the ambiguity set $\mathcal{F}$ of random vector $\boldsymbol{X}$ is equivalent to the ambiguity set $\tilde{\mathcal{F}}$ of random vector $\tilde{\boldsymbol{X}}$.
	Therefore,
	\begin{equation*}
	    \begin{aligned}
	\sup_{\mathbb{P}\in \mathcal{F}} \mathbb{E}_{\mathbb{P}} \left[\ell\left( \boldsymbol{w}^T \boldsymbol{X} \right)\right] 
	= &\sup_{\mathbb{P}\in \mathcal{F}} \mathbb{E}_{\mathbb{P}} \left[(u_2-u_1) \left(\boldsymbol{w}^T \boldsymbol{X} + \frac{v_2-v_1}{u_2-u_1} \right)_+ + \left(u_1 \boldsymbol{w}^T \boldsymbol{X} +v_1\right) \right] \\
 =\ & (u_2-u_1) \sup_{\mathbb{P}\in \mathcal{F}} \mathbb{E}_{\mathbb{P}}\left[\left(\boldsymbol{w}^T \boldsymbol{X} + \frac{v_2-v_1}{u_2-u_1} \right)_+ \right] + u_1 \boldsymbol{w}^T \boldsymbol{\mu} +v_1  \\
	=\ &(u_2-u_1) \sup_{\mathbb{P}\in \tilde{\mathcal{F}}} \mathbb{E}_{\mathbb{P}} \left[\left(\boldsymbol{1}^T \tilde{\boldsymbol{X}} + \frac{v_2-v_1}{u_2-u_1} \right)_+ \right] + u_1 \boldsymbol{w}^T\boldsymbol{\mu} +v_1.
	    \end{aligned}
	\end{equation*}

\section*{Appendix B: Proof of Theorem 1}
As proved in \cite{bertsimas2002relation}, problem $\sup_{\mathbb{P}\in \mathcal{F}} \mathbb{E}_{\mathbb{P}} [\ell( \boldsymbol{w}^T \boldsymbol{X} )]$ is NP-hard for
general cases. In this paper, we study the bivariate case ($n=
2$). Specifically, we consider the following bivariate moment problem
\setcounter{equation}{0}
\renewcommand{\theequation}{B.\arabic{equation}}
 \begin{equation}
 \label{eq:bivariatePrimal_app}
v_P(\theta;q)=\sup_{\mathbb{P}\in \mathcal{F}(\theta)}\ \mathbb{E}_{\mathbb{P}} \left[\left(X_1+X_2 -q \right)_+\right],
\end{equation}
where $\theta=(\mu_1,\mu_2,\Sigma_{11},\Sigma_{22},\Sigma_{12})$ and the ambiguity set $\F(\theta)$ is described as follows
\begin{equation} \label{eq: amSet_app}
\F(\theta)=\{\mathbb{P}\in \mathbb{M}(\mathbb{R}_+^2) : \mathbb{E}_{\mathbb{P}} [X_1] = \mu_1, \mathbb{E}_{\mathbb{P}} [X_2] = \mu_2, \mathbb{E}_{\mathbb{P}}\left[X_1^2 \right] = \Sigma_{11}, \mathbb{E}_{\mathbb{P}}\left[X_2^2 \right] = \Sigma_{22}, \mathbb{E}_{\mathbb{P}}\left[X_1X_2 \right] = \Sigma_{12} \}.
\end{equation}
 For simplicity, we denote $(\Sigma_{11},\Sigma_{22},\Sigma_{12}):=(a\mu_1^2, b\mu_2^2, c\mu_1\mu_2)$ for the rest of the Appendix. Thus, the covariance matrix $M$ and correlation $\rho$ can be represented by  
\begin{equation} \label{eq: M_apx}
M=\left[
\begin{array}{cc}
(a-1)\mu_1^2&(c-1)\mu_1\mu_2\\
(c-1)\mu_1\mu_2&(b-1)\mu_2^2
\end{array}
\right]
\;\; \text{ and } \;\;
\rho= \frac{c-1}{\sqrt{(a-1)(b-1)}}.
\end{equation} 
Note that parameters $a$, $b$ and $c$ follow Assumption 1, which characterizes the feasibility of the primal problem (\ref{eq:bivariatePrimal_app}).
To exclude the trivial cases in which the ambiguity set only contains the distribution with a fixed single-point marginal distribution,  without loss of generality,  we assume that $a>1$ and $b>1$ in the remaining discussion.

The dual of problem (\ref{eq:bivariatePrimal_app}) is given as
\begin{equation}\label{eq:dual}
    \begin{aligned}
        &&\quad \quad  v_D(\theta;q)=\inf_{\boldsymbol{z}} \quad&  z_1+z_2\mu_1+z_3\mu_2+z_4a\mu_1^2+z_5b\mu_2^2+z_6c\mu_1\mu_2 \\
	&&  {s.t. }\quad h_1(x;\boldsymbol{z}):=& z_1+z_2x_1+z_3x_2+z_4x_1^2+z_5x_2^2+z_6x_1x_2\geq 0,  \quad \forall (x_1, x_2)\in \R_+^2\\
	&&  \quad h_2(x;\boldsymbol{z}):= q+z_1&+(z_2-1)x_1+(z_3-1)x_2+z_4x_1^2+z_5x_2^2+z_6x_1x_2\geq 0, \quad\forall (x_1, x_2)\in \R_+^2,
    \end{aligned}
\end{equation}
with the dual variable $\boldsymbol{z}=(z_1,z_2,z_3,z_4,z_5,z_6) \in \R^6$.

\setcounter{pro}{1}


Define six conditions of $(\theta, q)$ in Table \ref{conditiontable},
 \setcounter{table}{0}
\renewcommand{\thetable}{B.\arabic{table}}
\begin{table}[htbp]
\centering
        \begin{tabular}{|c|c|c|c|c|c|}
            \hline
             Condition 1 & Condition  2 & Condition  3 & Condition  4 & Condition 5 & Condition  6 \\
            \hline
              $Q_a \geq q,$  & $Q_b < q,$ & $Q_a < q,$  & $Q_b < q,$ & $Q_a < q,$ & $Q_a > |\zeta_a|,$ \\
              $Q_b \geq q$ & $Q_b \leq \zeta_b$ & $Q_a \leq \zeta_a$ & $Q_b \leq -\zeta_b$ & $Q_a \leq -\zeta_a$ & $Q_b > |\zeta_b|$ \\
             \hline
        \end{tabular}
        \caption{Six Conditions}
             \label{conditiontable}
        \end{table}
 
 \noindent where $\zeta_a=a\mu_1+c\mu_2-q$ and $\zeta_b=c\mu_1+b\mu_2-q$, 
 and
\begin{equation*}
\begin{aligned}
Q_a &= \sqrt{q^2 - 2q \frac{a-c}{a-1}\mu_2 + \frac{ab-c^2}{a-1}\mu_2^2}, \\
Q_b &= \sqrt{q^2 - 2q \frac{b-c}{b-1}\mu_1 + \frac{ab-c^2}{b-1}\mu_1^2}, \\
Q_c &= \sqrt{q^2 - 2q(\mu_1 + \mu_2) + a\mu_1^2 + b\mu_2^2 + 2c\mu_1\mu_2}.
\end{aligned}
\end{equation*}


	For simplicity of the later proof, we denote 
 \begin{equation}\label{AB}
     \overline{A}=\frac{ab-c^2}{2(a-c)}\mu_2, \;\; \underline{A}=\frac{ab-c^2}{2(b-c)}\mu_1,\;\;
	\overline{B}=\overline{A}+\frac{\overline{D} \cdot(\overline{C}-\underline{C}) }{2(a-c) \overline{C}}, \;\; \underline{B}=\underline{A}+\frac{\underline{D} \cdot (\underline{C}-\overline{C})}{2(b-c) \underline{C}},
 \end{equation}
	where
 \begin{equation}
	\overline{C}=(a-1)\mu_1+(c-1)\mu_2,\;\; \underline{C}=(c-1) \mu_{1}+(b-1) \mu_{2},
 \end{equation}
 and
 \begin{equation}\label{fz3}
	\overline{D}=(a-1) (a \mu_1 + c \mu_2)>0, \;\;\underline{D}=(b-1)(c\mu_1+b\mu_2)>0.
 \end{equation}
 Note that $\overline{A},\overline{B},\overline{C},\overline{D}$ and $\underline{A},\underline{B},\underline{C},\underline{D}$ are symmetric if we replace $\mu_1$ by $\mu_2$ and $a$ by $b$.

    We also present four terms that will be frequently employed for the later analysis:
	\begin{align}
	&a\mu_1+c\mu_2-\overline{B}=\frac{a \overline{C}^2+(a-1)(b-1)-(c-1)^2}{(a-1)\overline{C}}\mu_2^2,    \label{eq: diff1}\\
	&c\mu_1+b\mu_2-\underline{B}=\frac{b \underline{C}^2+(a-1)(b-1)-(c-1)^2}{(b-1)\underline{C}}\mu_1^2,    \label{eq: diff2}\\
	&\overline{A}-\underline{A}=\frac{ab-c^2}{(a-c)(b-c)}(\underline{C}-\overline{C}), \label{eq: diffA}\\
	&\overline{B}-\underline{B}=\frac{(\overline{C}-\underline{C})}{\overline{C}\underline{C}}E, \label{eq: diffB}
	\end{align}
	where $E=(a-1)c \mu_1^2+\big(ab-a-b+c^2\big)\mu_1\mu_2+(b-1)c \mu_2^2$. Because of the condition $(a-1)(b-1)-(c-1)^2\geq0$ in Assumption 1, it is easy to prove that
$E \geq (a-1)c \mu_1^2+2(c-1)c\mu_1\mu_2+(b-1)c \mu_2^2=c\V ar(X_1+X_2) >0$ .

 In Appendix B.1, we prove parameters $(\theta,q)$ must satisfy one and precisely one of six conditions in Table \ref{conditiontable}. Then we provide the worst-case distributions for every condition and prove the optimality in Appendix B.2. Through the analyses of the worst-case distribution for each condition, Theorem 1 can be proved.

\subsection*{\textbf{Appendix B.1.} Proof of Lemma 2}

To show that every feasible input $(\theta,q)$ satisfies one and exactly one of six conditions, we first show that conditions in Table \ref{conditiontable} are equivalent to those in  (\ref{cond1}-\ref{cond6}). By simple algebra, we can verify the following equivalency with
	$$    Q_a^2 \leq \zeta_a^2 \Leftrightarrow \overline{C} (q -\overline{B}) \leq 0, \;\;
 Q_b^2 \leq \zeta_b^2 \Leftrightarrow \underline{C} (q - \underline{B}) \leq 0,
	$$
$$Q_a\geq q \Leftrightarrow 2q(a-c)\leq(ab-c^2)\mu_2, \;\; Q_b\geq q \Leftrightarrow 2q(b-c)\leq(ab-c^2)\mu_1.$$
Thus, we can rewrite conditions in Table \ref{conditiontable} as follows: 
 \begin{align}
     & \textbf{condition 1: }
     2q(a-c)\leq(ab-c^2)\mu_2,\;\; 2q(b-c)\leq(ab-c^2)\mu_1;
     \label{cond1}\\
     & \textbf{condition 2: }
     2q(b-c)>(ab-c^2)\mu_1, \;\;\underline{C} (q - \underline{B}) \leq 0, \;\; \zeta_b \geq 0;
     \label{cond2}\\
     & \textbf{condition 3: }
     2q(a-c)>(ab-c^2)\mu_2, \;\;\overline{C} (q - \overline{B}) \leq 0, \;\; \zeta_a \geq 0; 
     \label{cond3}\\
     & \textbf{condition 4: }
    2q(b-c)>(ab-c^2)\mu_1, \;\;\underline{C} (q - \underline{B}) \leq 0, \;\; \zeta_b \leq 0; 
    \label{cond4}\\
    & \textbf{condition 5: }
    2q(a-c)>(ab-c^2)\mu_2, \;\;\overline{C} (q - \overline{B}) \leq 0, \;\; \zeta_a \leq 0; 
    \label{cond5}\\
    & \textbf{condition 6: }
    \underline{C} (q - \underline{B}) > 0, \;\; \overline{C} (q - \overline{B}) > 0.
    \label{cond6}
 \end{align}

Furthermore, according to the conditions in (\ref{cond1}-\ref{cond6}), we will discuss the feasible region of $q$ under each condition based on the input $\theta$ in different cases. The results are summarized in Table \ref{tab:tableRegionQ}. 
	\begin{table}[htbp]
		\renewcommand{\arraystretch}{1.3}
     \small
		\begin{tabular}{c|c|c|c|c|c|c}
			\hline
			\multirow{2}*{\diagbox{Cond}{Cases}}&\multirow{2}*{$a>c>b$}&\multirow{2}*{$b>c>a$}&\multicolumn{4}{c}{$a>c,b>c$}\\
			\cline{4-7}
			~&~&~&
   $\overline{C}\geq \underline{C},\underline{C}>0$
   &$\overline{C}\geq \underline{C},\underline{C}<0$
   &$\overline{C}\leq \underline{C},\overline{C}>0$
   &$\overline{C}\leq \underline{C},\overline{C}<0$\\
			\hline
			\text{condition 1}&$0\leq q\leq \overline{A}$&$0\leq q\leq \underline{A}$&$0\leq q\leq \overline{A}$&$0\leq q\leq \overline{A}$&$0\leq q\leq \underline{A}$&$0\leq q\leq \underline{A}$\\
			\text{condition 2}& -&$\underline{A}<q\leq\underline{B}$&-&-&$\underline{A}<q\leq\underline{B}$&$\underline{A}<q\leq\underline{B}$\\
			\text{condition 3}&$\overline{A}<q\leq\overline{B}$&-&$\overline{A}<q\leq\overline{B}$&$\overline{A}< q\leq\overline{B}$&-&-\\
			\text{condition 4}&-&-&-&$q\geq\underline{B}$&-&-\\
			\text{condition 5}&-&-&-&-&-&$q\geq\overline{B}$\\
			\text{condition 6}&$q>\overline{B}$&$q>\underline{B}$&$q>\overline{B}$&$\overline{B}<q<\underline{B}$&$q>\underline{B}$&$\underline{B}<q<\overline{B}$\\
			\hline
		\end{tabular}
		\caption{Feasible Regions of $q$ in Different Cases}
		\label{tab:tableRegionQ}
	\end{table}
Through this table, we can clearly see that the intersection of feasible regions of $q$ in each column is empty, and the union of those is $\R_+$. In addition, as the six cases of input $\theta$ are also mutually exclusive, every feasible input $(\theta,q)$ satisfies one and exactly one of six conditions.
Now, we are ready to see how to obtain feasible regions of $q$ in each case.

	\textbf{Case $1$} ($a>c>b$): In this case, we have $a>c>b>1$ based on $b>1$ in Assumption 1, and we can first verify that
 \begin{equation}\label{fz1}
 \overline{C}>0, \;\;\underline{C}>0,\;\;
\text{and} \;\;\overline{C}-\underline{C}=(a-c)\mu_1+(c-b)\mu_2>0.
\end{equation}
 
Condition 1 in \eqref{cond1} can be rewritten as
 $$q\leq \frac{ab-c^2}{2(a-c)}\mu_2=\overline{A}\;\; \text{and}\;\;q\geq\frac{ab-c^2}{2(b-c)}\mu_1=\underline{A}.$$
According to H$\ddot{o}$lder's inequality, we have $(\mathbb{E}[X_1X_2])^2\leq \mathbb{E}[X^2_1]\mathbb{E}[X^2_2]$, i.e.,
\begin{equation}\label{fz2}
    c^2\leq ab.
\end{equation}
Thus, $\overline{A}\geq 0$ and $\underline{A}\leq 0$  due to \eqref{fz2} and $b<c$. In all,
condition 1 can be represented as $0\leq q\leq\overline{A}$, as $q \in \R_+$.
 
Condition 3 in \eqref{cond3} can be rewritten as
 $$q>\overline{A},\;\; q\leq\overline{B},\;\; \text{and}\;\; q \leq a\mu_1+c\mu_2,$$ 
because of $\overline{C}>0$ in \eqref{fz1} and $a>c$.
 According to (\ref{eq: diff1}), we have 
\begin{equation}\label{fz4}
a\mu_1+c\mu_2-\overline{B}=\frac{a \overline{C}^2+(a-1)(b-1)-(c-1)^2}{(a-1)\overline{C}}\mu_2^2>0,
\end{equation}
due to  $\overline{C}>0$ in \eqref{fz1} and $(a-1)(b-1)\geq (c-1)^2$ in Assumption 1. In addition, we can verify that $\overline{B}-\overline{A}=\frac{\overline{D} \cdot(\overline{C}-\underline{C}) }{2(a-c) \overline{C}} > 0$ due to \eqref{fz1} and $\overline{D}>0$ in \eqref{fz3}. In all, condition 3 can be simplified as $\overline{A}<q\leq\overline{B}$. 

Condition 6 in \eqref{cond6} can be written as $$q>\overline{B}\;\; \text{and}\;\; q> \underline{B},$$
due to inequality \eqref{fz1}. Moreover, we have $\overline{B} > \underline{B}$, as $\overline{B}-\underline{B}>0$ by equation \eqref{eq: diffB} and inequality \eqref{fz1}. In all, condition 6 can be represented as $q>\max\{\overline{B},\; \underline{B}\}=\overline{B}$.

Similar to the previous analysis,  the feasible region of $q$ under condition 2 in  (\ref{cond2}) and that under condition 4 in (\ref{cond4}) are empty with $q<\underline{A}<0$. Due to $a\mu_1+c\mu_2-\overline{B}>0$ in \eqref{fz4}, we know that $q\leq\overline{B}$ and $q\geq a\mu_1+c\mu_2$ together are incompatible, which means that the feasible region of $q$ under condition 5 in \eqref{cond5} is also empty.

In summary, the space $q\geq0$ is fulfilled by condition 1, condition 3, and condition 6 in this case, and all feasible regions are mutually exclusive, which are summarized in the first column of Table \ref{tab:tableRegionQ}.  
	
\textbf{Case $2$} ($b>c>a$): 
 Note that the condition on the input $\theta$ under case 2 ($b>c>a$) is symmetric to that under case 1 ($a>c>b$) in the sense of swapping $a$ and $b$. In fact, condition 2 and condition 3 are symmetric; condition 4 and condition 5 are symmetric; condition 1 and condition 6 are self-symmetric. Therefore, by the same analysis in case 1, we summarize the result in the second column of Table \ref{tab:tableRegionQ}.

\textbf{Case $3$} ($a>c,b>c, \overline{C} \geq \underline{C}, \underline{C}> 0$):  
In this case, we can first verify that 
\begin{equation}\label{case3_1}
\underline{A}\geq0,\;\;\overline{A}\geq0,\;\;\overline{A}-\underline{A}=\frac{ab-c^2}{(a-c)(b-c)}(\underline{C}-\overline{C})\leq0.
\end{equation}
due to (\ref{fz2}). 

Condition 1 in (\ref{cond1}) can be rewritten as 
$$q\leq\overline{A}\;\;\text{and}\;\;q\leq\underline{A},$$
due to $a>c,b>c$. 
As $\overline{A} \leq \underline{A}$  holds by (\ref{case3_1}) and $q \in \R_+$, condition 1 can be represented as $0\leq q\leq\overline{A}$.

 Condition 3 in \eqref{cond3} can be written as $$q>\overline{A},\;\; q\leq\overline{B},\;\; \text{and} \;\; q\leq a\mu_1+c\mu_2,$$
 because of $\overline{C}>0$ and $a>c$. According to (\ref{eq: diff1}), we have $a\mu_1+c\mu_2-\overline{B}>0$ due to $\overline{C}\geq\underline{C}>0$ in this case and 
$(a-1)(b-1)\geq (c-1)^2$ in Assumption 1. In addition, we can verify that $\overline{B}-\overline{A}=\frac{\overline{D} \cdot(\overline{C}-\underline{C}) }{2(a-c) \overline{C}} \geq 0$ due to $\overline{D}>0$ in \eqref{fz3} and $\overline{C}\geq\underline{C}$ in this case. Thus, $a\mu_1+c\mu_2> \overline{B}\geq \overline{A}$. In all, the condition can be simplified as $\overline{A}<q\leq\overline{B}$.

Condition 6 in (\ref{cond6}) can be written as 
$$q>\overline{B}\;\; \text{and}\;\; q>\underline{B},$$
due to $\overline{C}\geq\underline{C}>0$. Moreover, we have $\overline{B}\geq \underline{B}$, as $\overline{B}- \underline{B}\geq0$ by equation (\ref{eq: diffB}) and $\overline{C}\geq\underline{C}$. In all, condition 6 
can be simplified as $q>\overline{B}$.

Similar to the previous analysis, we can verify that 
$\underline{B}-\underline{A}=\frac{\underline{D} \cdot (\underline{C}-\overline{C})}{2(b-c) \underline{C}}\leq0$.
Thus we know that $q\leq\underline{B}$ and $q>\underline{A}$
together are incompatible, which means that the feasible region of $q$ under condition 2 in \eqref{cond2} and condition 4 in \eqref{cond4} are empty. In addition, due to 
$a\mu_1+c\mu_2-\overline{B}>0$, we know that $q\geq a\mu_1+c\mu_2$ and $q\leq\overline{B}$ together are incompatible, which means that the feasible region of $q$ under condition 5 in \eqref{cond5} is also empty. 

In summary, the space $q\geq0$ is fulfilled by condition 1, condition 3, and condition 6 in this case, and all feasible regions are mutually exclusive, which are summarized in the third column of Table \ref{tab:tableRegionQ}.

\textbf{Case $4$} ($a>c,b>c, \overline{C} \geq \underline{C}, \underline{C}< 0$): 
In this case, we first prove that $\overline{C} > 0$ by contradiction. Assume $\overline{C}\leq 0$ and $\underline{C}<0$, i.e., $(a-1)\mu_1 \leq (1-c)\mu_2$ and $(b-1)\mu_2 < (1-c)\mu_1$. If we cancel $\mu_1$ and $\mu_2$, it leads to $(a-1)(b-1)<(1-c)^2$, which contradicts the feasibility requirement in Assumption 1. 

Furthermore, we can verify that 
\begin{equation*}
\underline{A}\geq0,\;\;\overline{A}\geq0,\;\;\overline{A}-\underline{A}\leq0.
\end{equation*}
By the same analysis in case 3, the feasible regions of $q$ under condition 1 and condition 3 are the same as those in case 3.



Condition 4 in \eqref{cond4} can be written as 
$$q>\underline{A},\;\;q\geq\underline{B}\;\;\text{and}\;\;q\geq c\mu_1+b\mu_2,$$
because of $b>c$ and $\underline{C}< 0$.
According to (\ref{eq: diff2}), we have that 
$c\mu_1+b\mu_2-\underline{B}<0$ due to $\underline{C}<0$ in this case and 
$(a-1)(b-1)-(c-1)^2\geq0$ in Assumption 1. In addition, we verify that $\underline{B}-\underline{A}=\frac{\underline{D} \cdot (\underline{C}-\overline{C})}{2(b-c) \underline{C}}\geq0$ due to $\underline{D}>0$ in  \eqref{fz3} and $\underline{C}<0$ together with 
$\underline{C}\leq\overline{C}$ in this case. In all, condition 4 can be simplified as $q\geq\underline{B}$.

Condition 6 in \eqref{cond6} can be written as 
$$q<\underline{B}\;\;\text{and}\;\;q>\overline{B},$$
because of $\overline{C} > 0$ and $\underline{C}<0$.
Moreover, we have $\overline{B}\leq \underline{B}$, as $\overline{B}- \underline{B}\leq0$ by equation (\ref{eq: diffB}) and $\overline{C}\geq\underline{C}$. In all, condition 6 
can be simplified as $\overline{B}<q<\underline{B}$.

Similar to the previous analysis, due to 
$c\mu_1+b\mu_2-\underline{B}<0$, we know that 
$q\geq\underline{B}$ and $q\leq c\mu_1+b\mu_2$ together are incompatible, which means that 
the feasible region of $q$ under condition 2 in (\ref{cond2}) is empty. In addition, due to $a\mu_1+c\mu_2-\overline{B}>0$, we know that  
$q\leq\overline{B}$ and $q\geq a\mu_1+c\mu_2$ together are incompatible, which means that the feasible region of $q$ under condition 5 in (\ref{cond5}) is empty.

In summary, the space $q\geq0$ is fulfilled by condition 1, condition 3, condition 4, and condition 6 in this case, and all feasible regions are mutually exclusive, which are summarized in the fourth column of Table \ref{tab:tableRegionQ}.

	\textbf{Case $5$} ($a>c,b>c, \overline{C} < \underline{C}, \overline{C}> 0$):
 Note that the condition about the input $\theta$ under case 5 ($a>c,b>c, \overline{C} < \underline{C}, \overline{C}> 0$) is symmetric to that under case 3 ($a>c,b>c, \overline{C} \geq \underline{C}, \underline{C}> 0$). As the symmetry 
 of condition 2 and condition 3  and the self-symmetry of condition 1 and condition 6, by the same analysis in case 3, 
 we summarize the result in the fifth column of Table \ref{tab:tableRegionQ}.

\textbf{Case $6$} ($a>c,b>c, \overline{C} < \underline{C}, \overline{C}<0$):  
 Note that the condition about the input $\theta$ under case 6 ($a>c,b>c, \overline{C} < \underline{C}, \overline{C}<0$) is symmetric to that under case 4 ($a>c,b>c, \overline{C} \geq \underline{C}, \underline{C}< 0$).
 Note that condition 2 and condition 3 are symmetric; condition 4 and condition 5 are symmetric; condition 1 and condition 6 are self-symmetric. Therefore, by the same analysis in case 4, we summarize the result in the last column of Table \ref{tab:tableRegionQ}.

\subsection*{\textbf{Appendix B.2.} Proof of Theorem 1}
In this subsection, we provide the optimal values and optimal distributions under each condition in Lemma \ref{lem:1}-\ref{lem:6}, and Theorem 1 follows immediately from these lemmas. 

Before diving into details, we first present our intuitions on how to derive the optimal distribution under each condition. Let $\bz^*$ be the dual optimal solution, so the dual feasibility implies $h_1(\bx;\bz^*) \geq 0$ and $ h_2(\bx;\bz^*) \geq 0$ for all $\bx \in \R^2_+$ in \eqref{eq:dual}. Moreover, the complementary slackness conditions indicate that $h_1(\bx^*;\bz^*)=0$ or $h_2(\bx^*;\bz^*)=0$ for every $\bx^*$ in optimal support. Instead of focusing on $h_1$ and $h_2$ directly,
geometrically, we consider a curved surface $S$ with
$$ S = \left\{(x_1, x_2, t)\in  \mathbb{R}^3 : z_1^*+x_1z_2^*+x_2z_3^*+x_1^2z_4^*+x_2^2z_5^*+x_1x_2z_6^* = t  \right\},$$
and a folded hyperplane $T$ with 
$$T = \left\{(x_1, x_2, t)\in \mathbb{R}_+^3 : \max\{x_1+x_2-q, 0\} = t  \right\}.$$
Accordingly, $h_1, h_2 \geq 0$ is equivalent to $S$ above $T$, and $h_1=0$ or $h_2=0$ is equivalent to $S$ touching $T$. Through this interpretation, we can quickly analyze the characteristics of optimal support. Based on these characteristics, we successfully identify an optimal distribution under each condition. In this derivation, although we are motivated by the framework for solving univariate moment problems introduced in \cite{guo2022unified}, it also requires a lot of trial and error to obtain an optimal distribution of our bivariate moment problem. As an illustration, we plot $S$ and $T$ with parameters $\mu_1=\mu_2=1, a=1.1, b=1.015, \rho=0.7, q=1$ in Figure \ref{3Dfigure}.
In this figure, the points of the solid line together with $(0,q-Q_a)$ are touching points of $S$ and $T$. In fact, the curved surface $S$ is a paraboloid. 
\setcounter{figure}{0}
\renewcommand{\thefigure}{B.\arabic{figure}}
\begin{figure}[h]
	\centering
	\includegraphics[width=0.65\linewidth]{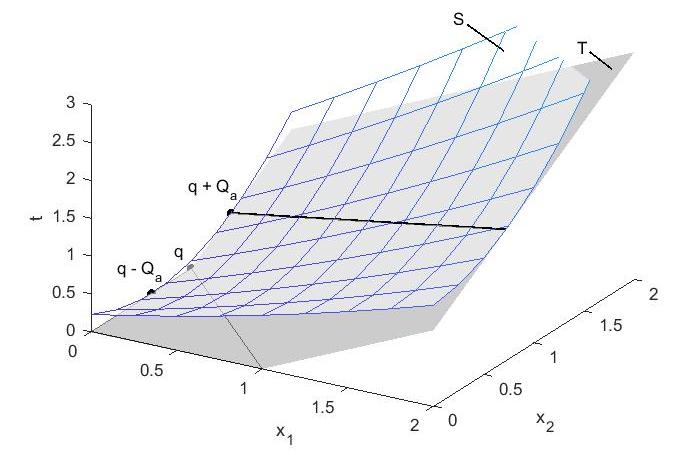}
  \caption{An illustration of $S$ and $T$}
	\label{3Dfigure}
\end{figure}

Now, we are ready to present six lemmas corresponding to each condition. In each proof, we first verify the primal feasibility. Then, we provide a dual solution and show its feasibility. Lastly, we verify the  zero duality gap and thus the optimality.

\setcounter{lemma}{0}
\renewcommand{\thelemma}{B.2.\arabic{lemma}}

\begin{lemma}
	\label{lem:1}
	Suppose that $ Q_a \geq q, Q_b \geq q$, and Assumption 1 holds. The optimal distribution for problem \ref{eq:bivariatePrimal_app}
	 can be characterized as follows
	
	(i) if $b>c$, 
	\begin{equation}
	\label{distr:zeroline1}
	\begin{cases}(0,0), & \text{w.p. } \, p_{1}=\frac{(a-1)(b-1)-(c-1)^2}{(a b-c^{2})} \\ (\frac{a b-c^{2}}{b-c} \mu_{1}, 0),  & \text{w.p. } \, p_{2}=\frac{(b-c)^{2}}{\left(a b-c^{2}\right) b} \\ (c \mu_{1}, b \mu_{2}), & \text{w.p. } \, p_{3}=\frac{1}{b}\end{cases}
	\end{equation}		
	
	(ii) if $a>c$, 
	\begin{equation}
	\label{distr:zeroline2}
	\begin{cases}(0,0), & \text{w.p. } \, p_{1}=\frac{(a-1)(b-1)-(c-1)^2}{(a b-c^{2})} \\ (0, \frac{a b-c^{2}}{a-c} \mu_{2}), & \text{w.p. } \,  p_{2}=\frac{(a-c)^{2}}{\left(a b-c^{2}\right) a} \\ (a \mu_{1}, c \mu_{2}), & \text{w.p. } \, p_{3}=\frac{1}{a}.\end{cases}
	\end{equation}		
	For both cases, the optimal values are all $ v_P(q;\theta)=\mu_1 + \mu_2 - q\cdot \frac{a+b - 2c}{ab - c^2}$.
\end{lemma}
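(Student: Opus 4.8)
The plan is to establish the result through the primal--dual framework of \eqref{eq:dual}: I exhibit the three-point law \eqref{distr:zeroline1} as a primal-feasible distribution, construct an explicit dual-feasible certificate $\boldsymbol z^*$, and show the two objective values coincide, so that weak duality together with complementary slackness forces both to be optimal. Case (ii) will then follow from case (i) by the symmetry $a\leftrightarrow b$, $\mu_1\leftrightarrow\mu_2$, $x_1\leftrightarrow x_2$ noted after \eqref{fz3}, so I focus on the case $b>c$.

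First I would verify primal feasibility. The three probabilities sum to one, $p_1\geq 0$ is exactly the feasibility inequality $(a-1)(b-1)\geq(c-1)^2$ of Assumption~\ref{pro:1}, and $p_2,p_3>0$ with the support in $\mathbb{R}_+^2$ because $b>c$ and $ab>c^2$. A direct moment check gives $\mathbb{E}[X_1]=\mu_1$, $\mathbb{E}[X_2]=\mu_2$, $\mathbb{E}[X_1^2]=a\mu_1^2$, $\mathbb{E}[X_2^2]=b\mu_2^2$, $\mathbb{E}[X_1X_2]=c\mu_1\mu_2$. To evaluate the objective, I note $(X_1+X_2-q)_+$ vanishes at $(0,0)$, while at the other two atoms it equals $x_1+x_2-q$ provided $x_1+x_2\geq q$ there; granting this, the objective collapses to $\mu_1+\mu_2-q(p_2+p_3)=\mu_1+\mu_2-q\frac{a+b-2c}{ab-c^2}$, since one computes $p_2+p_3=\frac{a+b-2c}{ab-c^2}$. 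Activeness at $(\tfrac{ab-c^2}{b-c}\mu_1,0)$ is immediate from the Condition~1 bound $q\leq\frac{ab-c^2}{2(b-c)}\mu_1=\underline A$; activeness at $(c\mu_1,b\mu_2)$, i.e.\ $\zeta_b\geq 0$, I would obtain from the same bound (together with $q\leq\overline A$ when $a>c$) by splitting on the sign of $ab+c^2-2bc$.

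The heart of the argument is the dual certificate. Geometrically (Figure~\ref{3Dfigure}), the paraboloid $S$ must touch the sloped face of $T$ along the entire line joining the two nonzero atoms; this forces $h_2(\cdot;\boldsymbol z^*)$ to be a nonnegative quadratic vanishing on that line, hence a perfect square $h_2=\lambda\,\ell(x)^2$ with $\ell(x)=(b-c)\mu_2 x_1+(a-c)\mu_1 x_2-(ab-c^2)\mu_1\mu_2$ the affine form cutting out the line. I then fix $\lambda=q/\big((ab-c^2)^2\mu_1^2\mu_2^2\big)>0$ so that $h_1(0,0)=0$; this determines $\boldsymbol z^*$ uniquely and gives $h_1=\lambda\ell^2+(x_1+x_2-q)$. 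Feasibility of $h_2$ is automatic. For $h_1$, a convex quadratic with a zero at the corner $(0,0)$, nonnegativity over $\mathbb{R}_+^2$ is equivalent to its gradient at the origin pointing into the quadrant, i.e.\ to $z_2^*\geq 0$ and $z_3^*\geq 0$. Computing these yields $z_2^*=1-\frac{2q(b-c)}{(ab-c^2)\mu_1}$ and $z_3^*=1-\frac{2q(a-c)}{(ab-c^2)\mu_2}$, so $z_2^*\geq 0\Leftrightarrow 2q(b-c)\leq(ab-c^2)\mu_1\Leftrightarrow Q_b\geq q$ and $z_3^*\geq 0\Leftrightarrow Q_a\geq q$ --- precisely the two inequalities defining Condition~1. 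This reduction is the main obstacle, and its clean resolution is exactly that dual feasibility is equivalent to Condition~1.

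Finally I would close the duality gap. Since $\mathbb{P}^*\in\mathcal{F}(\theta)$ matches all moments, the dual objective equals $\mathbb{E}_{\mathbb{P}^*}[h_1(X;\boldsymbol z^*)]$; because $h_1$ vanishes at $(0,0)$ and equals $x_1+x_2-q$ at the other two atoms (where $h_2=0$), this reduces to $\mathbb{E}_{\mathbb{P}^*}[(X_1+X_2-q)_+]$, the primal value. Thus the primal and dual feasible values coincide, and weak duality for \eqref{eq:bivariatePrimal_app}--\eqref{eq:dual} certifies optimality of both, establishing the claimed optimal value and worst-case distribution, with case (ii) inherited by symmetry.
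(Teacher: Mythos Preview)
Your proposal is correct and follows essentially the same primal--dual route as the paper: exhibit the three-point law as a primal feasible distribution, verify the explicit dual certificate, and match objective values. Your dual vector $\boldsymbol z^*$ (derived from the perfect-square ansatz $h_2=\lambda\ell^2$) is exactly the one the paper states in \eqref{dualcond1}, and your argument that $h_1\geq 0$ on $\mathbb{R}_+^2$ reduces to $z_2^*,z_3^*\geq 0$ via convexity and the gradient at the origin is the same as the paper's, which invokes the first-order inequality $h_1(\boldsymbol x)\geq h_1(\boldsymbol 0)+\nabla h_1(\boldsymbol 0)^T\boldsymbol x$. Your case split on the sign of $ab+c^2-2bc$ to verify $\zeta_b\geq 0$ does work (when $ab+c^2-2bc>0$ one checks $a>c$ is forced, whence $q\leq\overline A$ suffices), though a slicker route---available once you have dual feasibility---is simply to note $h_1(c\mu_1,b\mu_2)=\lambda\ell(c\mu_1,b\mu_2)^2+\zeta_b=\zeta_b\geq 0$.
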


\begin{proof}[Proof of Lemma \ref{lem:1}]	 



Recall the inequality $ab-c^2 \geq 0$ in \eqref{fz2}. We now further assume $b>c$. 

First, we shall verify the feasibility of the primal solution \eqref{distr:zeroline1}. Due to $b>c$ and $(a-1)(b-1)-(c-1)^2\geq0$, we can verify the non-negativity of the primal supporting points and $p_1, p_2, p_3>0$. Besides, it is easy to verify that  $p_1+p_2+p_3=1$. In addition, we can confirm the following moment constraints hold, $\Ex_{\mathbb{P}}[X_1]=\mu_1, \Ex_{\mathbb{P}}[X_2]=\mu_2,\Ex_{\mathbb{P}}[X_1^2]=a\mu_1^2, \Ex_{\mathbb{P}}[X_1 X_2]=c\mu_1\mu_2, \Ex_{\mathbb{P}}[X_2^2]=b\mu_2^2$. In all,  the primal solution \eqref{distr:zeroline1} is feasible for problem \eqref{eq:bivariatePrimal_app}.
	
Second, we shall verify the dual feasibility of the following dual solution $\boldsymbol{z}^*$ with
 \begin{equation}\label{dualcond1}
	\boldsymbol{z}^*=\left(0,1-\frac{2q(b-c)}{\mu_1(ab-c^2)},1-\frac{2q(a-c)}{\mu_2(ab-c^2)},\frac{q(b-c)^{2}}{\left(a b-c^{2}\right)^{2} \mu_{1}^{2}}, \frac{q(a-c)^{2}}{\left(a b-c^{2}\right)^{2} \mu_{2}^{2}}, \frac{2 q(a-c)(b-c)}{\left(a b-c^{2}\right)^2 \mu_{1} \mu_{2}}\right).
 \end{equation}
	We consider two dual constraint functions $h_1(\boldsymbol{x};\boldsymbol{z}^*)$ and $h_2(\boldsymbol{x};\boldsymbol{z}^*)$. Since $z_4^*\geq 0$ and $4z_4^*z_5^*-{z_6^*}^2=0$, then the Hessian matrix $\begin{pmatrix}
	2z_4^* & z_6^* \\
	z_6^* & 2z_5^*
	\end{pmatrix}$ is positive semidefinite, which means that both functions are convex. 
 Next, we take the derivative of functions $h_1(\boldsymbol{x};\boldsymbol{z}^*)$ and $h_2(\boldsymbol{x};\boldsymbol{z}^*)$,
\begin{equation}\label{h1}
    \nabla_{\boldsymbol{x} }h_1(\boldsymbol{x};\boldsymbol{z}^*)=(z_2^*+2z_4^*x_1+z_6^*x_2,\; z_3^*+2z_5^*x_2+z_6^*x_1)^T
\end{equation}
and 
\begin{equation}\label{h2}
    \nabla_{\boldsymbol{x} }h_2(\boldsymbol{x};\boldsymbol{z}^*) =(z_2^*-1+2z_4^*x_1+z_6^*x_2, \;z_3^*-1+2z_5^*x_2+z_6^*x_1)^T.
\end{equation}
Define $\boldsymbol{x}={(x_1,x_2)}^T$, $\boldsymbol{x}^*=(0,0)^T$,
 $\overline{\boldsymbol{x}}^*=(\frac{ab-c^2}{b-c}\mu_1,0)^T$. 
Substitute $x^*$ into equation \eqref{h1}, and we have
$$\nabla_{\boldsymbol{x}}h_1(\boldsymbol{x}^*;\boldsymbol{z}^*) ^T(\boldsymbol{x}-\boldsymbol{x}^*)=\frac{(b-1)(Q_b^2-q^2)}{(ab-c^2)\mu_1^2}x_1+\frac{(a-1)(Q_a^2-q^2)}{(ab-c^2)\mu_2^2}x_2.$$
Since $a>1$ and $b>1$ in Assumption 1, $ Q_a \geq q, Q_b \geq q$ in condition 1, and inequality \eqref{fz2}, then we have
$$\nabla_{\boldsymbol{x} }h_1(\boldsymbol{x}^*;\boldsymbol{z}^*) ^T(\boldsymbol{x}-\boldsymbol{x}^*)\geq 0, \;\; \forall \boldsymbol{x}\in \R_+^2.$$
Note that $h_1(\boldsymbol{x};\boldsymbol{z}^*)$ is a convex function, which implies for any $ \boldsymbol{x}\in \R_+^2 $, 
$$h_1(\boldsymbol{x};\boldsymbol{z}^*)\geq h_1(\boldsymbol{x}^*;\boldsymbol{z}^*)+ \nabla_{\boldsymbol{x} }h_1(\boldsymbol{x}^*;\boldsymbol{z}^*) ^T(\boldsymbol{x}-\boldsymbol{x}^*)\geq h_1(\boldsymbol{x}^*;\boldsymbol{z}^*)=0.$$
For function $h_2(\boldsymbol{x};\boldsymbol{z}^*)$, substitute $\overline{\boldsymbol{x}}^*$ into equation \eqref{h2}, and we have 
\begin{equation*}
    \nabla_{\boldsymbol{x} }h_2(\overline{\boldsymbol{x}}^*;\boldsymbol{z}^*)={\begin{pmatrix}
	-\frac{2q(b-c)}{\mu_1(ab-c^2)}+\frac{2q(b-c)^2}{(ab-c^2)^2\mu_1^2}\frac{ab-c^2}{b-c}\mu_1  \\
	-\frac{2q(a-c)}{\mu_2(ab-c^2)}+\frac{2q(a-c)(b-c)}{(ab-c^2)^2\mu_1\mu_2}\frac{ab-c^2}{b-c}\mu_1
	\end{pmatrix}}={\begin{pmatrix}
	    0\\0
	\end{pmatrix}}.
\end{equation*}
Note that the function $h_2(\boldsymbol{x};\boldsymbol{z}^*)$ is convex, which implies for any $ \boldsymbol{x}\in \R_+^2 $, 
$$h_2(\boldsymbol{x};\boldsymbol{z}^*)\geq h_2(\overline{\boldsymbol{x}}^*;\boldsymbol{z}^*)+ \nabla_{\boldsymbol{x} }h_2(\overline{\boldsymbol{x}}^*;\boldsymbol{z}^*) ^T(\boldsymbol{x}-\overline{\boldsymbol{x}}^*)= h_2(\overline{\boldsymbol{x}}^*;\boldsymbol{z}^*)=0.$$ 
In all, we prove that the dual solution $\boldsymbol{z}^*$ is feasible for problem \eqref{eq:dual}.

Lastly, it is easy to verify the objective values for the feasible primal-dual pair are all equal to $\mu_1 + \mu_2 - q\cdot \frac{a+b - 2c}{ab - c^2}$. Hence, the zero duality gap implies the optimality of the feasible primal-dual pair.

When $a>c$, the dual solution is the same as that of $b>c$ in equation \eqref{dualcond1}. By the similar analysis of the $b>c$ case, we can verify the distribution \eqref{distr:zeroline2} is an optimal solution. The last case is when $a=b=c$, in which $X_1$ and $X_2$ are perfectly correlated and Scarf's bound in Lemma 1 will solve the problem.
\end{proof}	

\begin{lemma}
	\label{lem:2}
	Suppose that $Q_b < q, Q_b \leq \zeta_b$ and Assumption 1 hold. Then an optimal distribution for primal distribution for problem
	\ref{eq:bivariatePrimal_app}
	can be characterized as
 \begin{equation}
     \label{distr:pointline2}
     \begin{cases}
	(q-Q_b,0)  & \text{w.p. } \, p_1=\frac{b-1}{2b}+\frac{q(b-1)+\mu_1(c-b)}{2bQ_b} \\
	(q+Q_b,0) & \text{w.p. }  \, p_2=\frac{b-1}{2b}-\frac{q(b-1)+\mu_1(c-b)}{2bQ_b}\\
	(c\mu_1, b\mu_2) & \text{w.p. } \, p_3=\frac{1}{b}\\
	\end{cases},
 \end{equation}
	where $Q_b=\sqrt{q^2 + 2q\cdot \frac{c-b}{b-1}\mu_1 + \frac{ab-c^2}{b-1}\mu_1^2}$. The optimal value is $v_P(q;\theta)=\frac{b-1}{2b}( (q+Q_b) - \frac{b-c}{b-1}\mu_1 ) + \mu_1+\mu_2 - q$.
\end{lemma}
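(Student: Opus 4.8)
The plan is to follow the primal--dual verification template of Lemma \ref{lem:1}: exhibit the stated three-point law as a feasible solution of the primal problem \eqref{eq:bivariatePrimal_app}, construct a matching feasible vector $\boldsymbol{z}^*$ for the dual \eqref{eq:dual}, and close the argument by showing the two objective values coincide. Since the primal is a supremum and the dual an infimum with $v_P(q;\theta)\leq v_D(q;\theta)$, a feasible primal--dual pair sharing a common value is automatically optimal.

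First I would check primal feasibility. Nonnegativity of the support reduces to $q-Q_b\geq 0$, which is exactly the hypothesis $Q_b<q$, while $(c\mu_1,b\mu_2)$ is nonnegative because $c\geq 0$ and $b>1$. For the weights, $p_3=1/b\in(0,1)$ is immediate, and $p_1,p_2\geq 0$ both amount, after clearing denominators and squaring, to the single inequality $(b-1)^2Q_b^2\geq\big((b-1)q-(b-c)\mu_1\big)^2$; expanding the definition of $Q_b$ collapses the difference of the two sides to $\mu_1^2\big[(ab-c^2)(b-1)-(b-c)^2\big]=b\,\mu_1^2\big[(a-1)(b-1)-(c-1)^2\big]\geq 0$, which holds by Assumption \ref{pro:1}. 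The five moment identities then follow by direct substitution: the cancellations in $\mathbb{E}[X_1]$ and $\mathbb{E}[X_1^2]$ use precisely the algebraic form of $Q_b$, whereas $\mathbb{E}[X_2]$, $\mathbb{E}[X_2^2]$ and $\mathbb{E}[X_1X_2]$ are carried entirely by the atom at $(c\mu_1,b\mu_2)$.

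Next I would build the dual certificate from the complementary-slackness geometry described before Lemma \ref{lem:1}: the paraboloid $S$ should touch the flat face $\{t=0\}$ at the single point $(q-Q_b,0)$, forcing $h_1=0$ there, and touch the ramp $\{t=x_1+x_2-q\}$ along the whole line through $(q+Q_b,0)$ and $(c\mu_1,b\mu_2)$, forcing $h_2=0$ there. Tangency along a line makes the Hessian of $h_2$ rank one, i.e. $4z_4^*z_5^*=(z_6^*)^2$, so I would posit $h_2(\boldsymbol{x};\boldsymbol{z}^*)=\alpha\,(n_1x_1+n_2x_2-d)^2$, with the line $n_1x_1+n_2x_2=d$ fixed by the two ramp-touching points, and recover $\boldsymbol{z}^*$ from the identity $h_1=h_2+(x_1+x_2-q)$ together with the normalization $h_2(q-Q_b,0)=Q_b$ that makes $h_1$ vanish at the isolated point; this yields an explicit $\boldsymbol{z}^*$ in the spirit of \eqref{dualcond1}. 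Dual feasibility then splits into two parts: convexity of $h_1,h_2$ from the positive-semidefinite (degenerate) Hessian, $h_2\geq 0$ on $\R_+^2$ for free from its perfect-square form, and $h_1\geq 0$ on $\R_+^2$ from the first-order inequality $\nabla_{\boldsymbol{x}}h_1(q-Q_b,0)^{T}(\boldsymbol{x}-(q-Q_b,0))\geq 0$, checking that the $x_1$-derivative vanishes while the $x_2$-derivative is nonnegative at this boundary point. This is where the second hypothesis $Q_b\leq\zeta_b$ enters: it guarantees $c\mu_1+b\mu_2\geq q+Q_b>q$, so $(c\mu_1,b\mu_2)$ genuinely lies in the ramp region and the sign pattern of $\boldsymbol{z}^*$ is consistent. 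Finally, evaluating the primal objective, only $p_2$ and $p_3$ sit strictly above the kink (again using $\zeta_b\geq Q_b>0$), giving $v_P=p_2Q_b+p_3(c\mu_1+b\mu_2-q)$, which simplifies to the stated value and matches the dual objective at $\boldsymbol{z}^*$.

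I expect the construction and the global nonnegativity check of $\boldsymbol{z}^*$ to be the main obstacle, especially verifying $h_1,h_2\geq 0$ over the entire half-space in the degenerate line-tangency regime and keeping careful track of which hypothesis ($Q_b<q$ for the primal support, $Q_b\leq\zeta_b$ for dual feasibility) is responsible for each sign condition.
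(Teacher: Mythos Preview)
Your proposal is correct and follows essentially the same primal--dual verification as the paper: check primal feasibility of the three-point law (using $Q_b<q$ for the support and the identity $(ab-c^2)(b-1)-(b-c)^2=b[(a-1)(b-1)-(c-1)^2]\geq 0$ for the weights), build a rank-one dual certificate so that $h_2$ is a perfect square vanishing along the line through $(q+Q_b,0)$ and $(c\mu_1,b\mu_2)$ while $h_1$ touches zero only at $(q-Q_b,0)$, verify $h_1\geq 0$ via the first-order inequality at that point (which is exactly where $Q_b\leq\zeta_b$ is used, giving $\partial_{x_2}h_1=(\zeta_b-Q_b)/(b\mu_2)\geq 0$), and conclude by matching objective values. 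The paper simply writes down the explicit $\boldsymbol{z}^*$ rather than deriving it from your tangency ansatz, but the certificate and the feasibility checks are identical.
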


\begin{proof}[Proof of Lemma \ref{lem:2}]



	First, we shall verify the feasibility of the primal solution \eqref{distr:pointline2}. 
Since $(a-1)(b-1)\geq (c-1)^2$ in Assumption 1, we   have $(ab-c^2)(b-1)-(b-c)^2 \geq (a+b-2c)(b-1)-(b-c)^2=(a-1)(b-1)-(c-1)^2 \geq 0$, and it further leads to
	$$Q_b^2 - (\frac{b-c}{b-1}\mu_1-q)^2=\frac{(ab-c^2)(b-1)-(b-c)^2}{(b-1)^2}\mu_1^2\geq0.$$ 
With $Q_b\geq 0$, we get $Q_b \geq \frac{b-c}{b-1}\mu_1-q$ and $Q_b \geq q-\frac{b-c}{b-1}\mu_1$. 
Now we rewrite $p_1$ and $p_2$ to be 
$$p_1=\frac{b-1}{2bQ_b}(Q_b+q-\mu_1\frac{b-c}{b-1})\;\text{ and }\;p_2=\frac{b-1}{2bQ_b}(Q_b-q+\mu_1\frac{b-c}{b-1}).$$
Thus,  $p_1 \geq 0$ is due to $Q_b \geq \frac{b-c}{b-1}\mu_1-q$, and $p_2 \geq 0$ is due to $Q_b \geq q-\frac{b-c}{b-1}\mu_1$. Besides, $p_3\geq 0$ holds obviously. 
Furthermore, by straightforward calculations, we can also 
 verify that the following constraints hold, $\Ex_{\mathbb{P}}[1]=p_1+p_2+p_3=1$,
 $\Ex_{\mathbb{P}}[X_1]=\mu_1, \Ex_{\mathbb{P}}[X_2]=\mu_2,\Ex_{\mathbb{P}}[X_1^2]=a\mu_1^2, \Ex_{\mathbb{P}}[X_1 X_2]=c\mu_1\mu_2, \Ex_{\mathbb{P}}[X_2^2]=b\mu_2^2$. In all, the primal solution \eqref{distr:pointline2} is feasible for problem \eqref{eq:bivariatePrimal_app}.
	
	Second, we shall verify the dual feasibility of the following dual solution $$\boldsymbol{z}^*=\left(\frac{(Q_b-q)^2}{4Q_b},\frac{Q_b-q}{2Q_b},1-\frac{Q_b+q}{2Q_b}\frac{Q_b+q-c\mu_1}{b\mu_2},\frac{1}{4Q_b}, \frac{(Q_b+q-c\mu_1)^2}{4b^2\mu_2^2Q_b},\frac{Q_b+q-c\mu_1}{2b\mu_2 Q_b}\right).$$
	We consider two dual constraint functions $h_1(\boldsymbol{x};\boldsymbol{z}^*)$ and $h_2(\boldsymbol{x};\boldsymbol{z}^*)$. 
Since $z_4^*\geq 0$ and $4z_4^*z_5^*-{z_6^*}^2=0$, then the Hessian matrix $\begin{pmatrix}
	2z_4^* & z_6^* \\
	z_6^* & 2z_5^*
	\end{pmatrix}$ is positive semidefinite, which means that both functions are convex. Next, we define $\boldsymbol{x}={(x_1,x_2)}^T$, $\boldsymbol{x}^*=(q-Q_b,0)^T$ and $\overline{\boldsymbol{x}}^*=(q+Q_b,0)^T$.
Then, we have 
$$\nabla_{\boldsymbol{x}}h_1(\boldsymbol{x}^*;\boldsymbol{z}^*) ^T(\boldsymbol{x}-\boldsymbol{x}^*)
=\frac{\zeta_b-Q_b}{b\mu_2}x_2 \geq 0, \;\;\forall \boldsymbol{x}\in \R_+^2.$$
Since $h_1(\boldsymbol{x};\boldsymbol{z}^*)$ is a convex function, for any $\boldsymbol{x}\in \R_+^2$, we have 
$h_1(\boldsymbol{x};\boldsymbol{z}^*)\geq h_1(\boldsymbol{x}^*;\boldsymbol{z}^*)=0.$
For function $h_2(\boldsymbol{x};\boldsymbol{z}^*)$, substitute $\overline{\boldsymbol{x}}^*$ into equation \eqref{h2}, and we have $\nabla_x h_2(\overline{\boldsymbol{x}}^*;\boldsymbol{z}^*)=(0,0)^T$. Thus $h_2(\boldsymbol{x};\boldsymbol{z}^*) \geq h_2(\overline{\boldsymbol{x}}^*;\boldsymbol{z}^*)=0$. In all, we prove that the dual solution $\boldsymbol{z}^*$ is feasible for problem \eqref{eq:dual}.


Lastly, it is easy to verify the objective values for the feasible primal-dual pair are all equal to 

\noindent $\frac{b-1}{2b}\left( (q+Q_b) - \frac{b-c}{b-1}\mu_1 \right) + \mu_1+\mu_2 - q$. Hence, the zero duality gap implies the optimality of the feasible primal-dual pair.
\end{proof} 

\begin{lemma}
	\label{lem:3}
	Suppose that 
$Q_a < q, Q_a \leq \zeta_a$ 
 and Assumption 1 hold. Then an optimal distribution for primal distribution for problem
	\ref{eq:bivariatePrimal_app}
	can be characterized as
 \begin{equation}\label{distr:pointline3}
  \begin{cases}
	(0,q-Q_a)  & \text{w.p. } \, p_1=\frac{a-1}{2a}+\frac{q(a-1)+\mu_2(c-a)}{2aQ_a} \\
	(0,q+Q_a) & \text{w.p. }  \, p_2=\frac{a-1}{2a}-\frac{q(a-1)+\mu_2(c-a)}{2aQ_a}\\
	( a\mu_1,c\mu_2) & \text{w.p. } \, p_3=\frac{1}{a}\\
	\end{cases},
 \end{equation}
 where $Q_a=\sqrt{q^2 + 2q\cdot \frac{c-a}{a-1}\mu_2 + \frac{ab-c^2}{a-1}\mu_2^2}$.
The optimal value is  $v_P(q;\theta)=
 \frac{a-1}{2a}\left( (q+Q_a) - \frac{a-c}{2a}\mu_2 \right) + \mu_1+\mu_2 - q$.
\end{lemma}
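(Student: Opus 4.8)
The plan is to obtain Lemma \ref{lem:3} as the mirror image of Lemma \ref{lem:2} under the coordinate swap, rather than repeating the primal--dual verification from scratch. Observe that the objective $(X_1+X_2-q)_+$ is symmetric in its two arguments, so pushing any $\mathbb{P}\in\mathcal{F}(\theta)$ forward under the reflection $\sigma(x_1,x_2)=(x_2,x_1)$ leaves the expectation unchanged. Under this pushforward the moment constraints are merely permuted: a distribution with parameters $(\mu_1,\mu_2,a,b,c)$ maps to one with parameters $\theta':=(\mu_2,\mu_1,b,a,c)$. Hence $\mathcal{F}(\theta)$ and $\mathcal{F}(\theta')$ are in bijection via $\sigma_\#$, and therefore $v_P(q;\theta)=v_P(q;\theta')$.

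The next step is to check that the data of the present lemma are exactly the $\sigma$-images of the data of Lemma \ref{lem:2} evaluated at $\theta'$. A direct substitution shows $Q_b(\theta')=Q_a(\theta)$ and $\zeta_b(\theta')=c\mu_2+a\mu_1-q=\zeta_a(\theta)$, so the hypotheses $Q_b(\theta')<q,\ Q_b(\theta')\leq\zeta_b(\theta')$ of Lemma \ref{lem:2} applied to $\theta'$ coincide with the hypotheses $Q_a<q,\ Q_a\leq\zeta_a$ here. Applying $\sigma$ to the three atoms of the distribution (\ref{distr:pointline2}) at $\theta'$ sends $(q-Q_b(\theta'),0)\mapsto(0,q-Q_a)$, $(q+Q_b(\theta'),0)\mapsto(0,q+Q_a)$, and $(c\mu_1',b'\mu_2')=(c\mu_2,a\mu_1)\mapsto(a\mu_1,c\mu_2)$, with the probabilities carried over verbatim (so $p_3=1/b'=1/a$); this is precisely the distribution (\ref{distr:pointline3}). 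Substituting $\theta'$ into the optimal-value formula of Lemma \ref{lem:2} then yields $\frac{a-1}{2a}\bigl((q+Q_a)-\frac{a-c}{a-1}\mu_2\bigr)+\mu_1+\mu_2-q$, establishing the claimed value.

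Since the reflection merely relabels coordinates, both the primal feasibility (the moment equalities, nonnegativity of the atoms, and $p_1,p_2,p_3\geq0$) and the zero-duality-gap certificate transfer from Lemma \ref{lem:2} without further work; in particular the nonnegativity of $p_1,p_2$ reduces to $Q_a\geq\bigl|q-\frac{a-c}{a-1}\mu_2\bigr|$, the mirror of the bound already proved there via Assumption 1. I do not expect a genuine obstacle along this route, as the argument is mechanical once the bijection is set up. The one point demanding care is bookkeeping: one must confirm that the substitution $(a,\mu_1)\leftrightarrow(b,\mu_2)$ is applied consistently across $Q_a$, $\zeta_a$, the dual vector $\boldsymbol{z}^*$, and the value formula, so that no stray factor survives. (This check also reveals that the coefficient in the stated value should read $\frac{a-c}{a-1}\mu_2$, the exact image of $\frac{b-c}{b-1}\mu_1$ in Lemma \ref{lem:2}.)
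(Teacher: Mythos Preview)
Your approach is correct and matches the paper's own proof, which likewise reduces Lemma~\ref{lem:3} to Lemma~\ref{lem:2} via the symmetry $(a,\mu_1)\leftrightarrow(b,\mu_2)$. Your catch on the coefficient is also right: the symmetric image of $\frac{b-c}{b-1}\mu_1$ is $\frac{a-c}{a-1}\mu_2$, so the $\frac{a-c}{2a}\mu_2$ inside the parentheses is a typo in the paper (it recurs in Theorem~\ref{thm: vp}).
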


\begin{proof}[Proof of Lemma \ref{lem:3}]
The proof is similar to that in Lemma \ref{lem:2}. Note that the conditions in Lemma \ref{lem:2} and \ref{lem:3} are symmetric, in some sense, through replacing $b$ by $a$ and $\mu_2$ by $\mu_1$.  
\end{proof}


\begin{lemma}
	\label{lem:4}
	Suppose that 
 $Q_b < q, Q_b \leq -\zeta_b$
 and Assumption 1 hold. Then an optimal distribution for primal distribution for problem
	\ref{eq:bivariatePrimal_app}
	can be characterized as
\begin{equation}\label{distr:pointline4}
 \begin{cases}
	(q-Q_b,0)  & \text{w.p. } \, p_1=\frac{b-1}{2b}+\frac{q(b-1)+\mu_1(c-b)}{2bQ_b} \\
	(q+Q_b,0) & \text{w.p. }  \, p_2=\frac{b-1}{2b}-\frac{q(b-1)+\mu_1(c-b)}{2bQ_b}\\
	(c\mu_1, b\mu_2) & \text{w.p. } \, p_3=\frac{1}{b}\\
	\end{cases},
 \end{equation}
 where $Q_b=\sqrt{q^2 + 2q\cdot \frac{c-b}{b-1}\mu_1 + \frac{ab-c^2}{b-1}\mu_1^2}$.
	The optimal value is $v_P(q;\theta)=
 \frac{b-1}{2b} \left(\frac{b-c}{b-1}\mu_1 - (q-Q_b) \right)$.
\end{lemma}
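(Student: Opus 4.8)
The plan is to reuse the primal half of the proof of Lemma \ref{lem:2} essentially verbatim and to concentrate all the new work on the dual certificate. Since the distribution \eqref{distr:pointline4} is literally the distribution \eqref{distr:pointline2} of Lemma \ref{lem:2}, its primal feasibility has already been established there: the non-negativity of the support points and of $p_1,p_2,p_3$ follows from the Assumption 1 inequality $Q_b\geq\left|q-\frac{b-c}{b-1}\mu_1\right|$, the probabilities sum to one, and the five moment constraints $\Ex_{\mathbb{P}}[X_1]=\mu_1$, $\Ex_{\mathbb{P}}[X_2]=\mu_2$, $\Ex_{\mathbb{P}}[X_1^2]=a\mu_1^2$, $\Ex_{\mathbb{P}}[X_1X_2]=c\mu_1\mu_2$, $\Ex_{\mathbb{P}}[X_2^2]=b\mu_2^2$ all hold. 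I would simply cite this and move on to the dual.

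The crux is that the \emph{active constraint} at the mass point $x^{(3)}=(c\mu_1,b\mu_2)$ differs between the two conditions. Under Condition 2 one has $\zeta_b\geq Q_b>0$, so $x^{(3)}$ lies in the sloped region $x_1+x_2>q$ and complementary slackness forces $h_2(x^{(3)};\bz^*)=0$; under Condition 4 one has $\zeta_b\leq -Q_b<0$, so $x_1+x_2=c\mu_1+b\mu_2=q+\zeta_b<q$ places $x^{(3)}$ in the flat region and now forces $h_1(x^{(3)};\bz^*)=0$. In the geometric language of the surfaces $S$ and $T$, the paraboloid $S$ must touch the flat face of $T$ along a line through both $(q-Q_b,0)$ and $(c\mu_1,b\mu_2)$ while touching the sloped face only at the boundary point $(q+Q_b,0)$ --- the roles of $h_1$ and $h_2$ interchanged relative to Lemma \ref{lem:2}. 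Concretely, the dual of Lemma \ref{lem:2} is no longer feasible here, since its gradient identity reads $\nabla_{\bx}h_1(\bx^*;\bz^*)^T(\bx-\bx^*)=\frac{\zeta_b-Q_b}{b\mu_2}x_2$, which is negative for $x_2>0$ once $\zeta_b<Q_b$.

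I would therefore exhibit a new dual vector $\bz^*$ engineered so that: (i) $z_4^*\geq 0$ and $4z_4^*z_5^*-(z_6^*)^2=0$, making the shared Hessian $\left(\begin{smallmatrix}2z_4^*&z_6^*\\ z_6^*&2z_5^*\end{smallmatrix}\right)$ positive semidefinite and both $h_1,h_2$ convex; (ii) $h_1(\,\cdot\,;\bz^*)$ is a perfect square of an affine form whose zero line passes through both $(q-Q_b,0)$ and $(c\mu_1,b\mu_2)$, so that $\nabla_{\bx}h_1=0$ at the interior point $(c\mu_1,b\mu_2)$ and $h_1\geq 0$ on $\R_+^2$ holds automatically; and (iii) $h_2$ touches $T$ only at $\overline{\bx}^*=(q+Q_b,0)$. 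Non-negativity of $h_2$ on $\R_+^2$ would then follow, as in Lemma \ref{lem:2}, from convexity together with a first-order inequality $\nabla_{\bx}h_2(\overline{\bx}^*;\bz^*)^T(\bx-\overline{\bx}^*)\geq 0$. Finally I would close the argument with the zero duality gap: the primal objective collapses to $p_2Q_b=\frac{b-1}{2b}\big(\frac{b-c}{b-1}\mu_1-(q-Q_b)\big)$ precisely because $\zeta_b<0$ annihilates the contribution $(c\mu_1+b\mu_2-q)_+$ of $x^{(3)}$, and the dual objective $z_1^*+z_2^*\mu_1+z_3^*\mu_2+z_4^*a\mu_1^2+z_5^*b\mu_2^2+z_6^*c\mu_1\mu_2$ is then verified to equal the same quantity.

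The step I expect to be the main obstacle is the construction and verification of this new dual certificate, specifically part (iii): the touching inequality for $h_2$ at $(q+Q_b,0)$ is exactly where Condition 4 must enter. By symmetry with Lemma \ref{lem:2}, I expect it to reduce to a term of the form $\frac{-\zeta_b-Q_b}{b\mu_2}x_2\geq 0$, which is non-negative on $\R_+^2$ if and only if $-\zeta_b\geq Q_b$, i.e.\ precisely the hypothesis $Q_b\leq-\zeta_b$. Guessing the correct closed form of $\bz^*$ that simultaneously realizes the perfect-square structure of $h_1$, the tangency of $h_2$, and the exact dual objective value is the genuinely nontrivial part; everything else mirrors Lemma \ref{lem:2} and is routine bookkeeping.
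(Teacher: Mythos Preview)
Your proposal is correct and follows essentially the same primal--dual route as the paper: cite Lemma~\ref{lem:2} for primal feasibility, exhibit a new dual $\bz^*$ with rank-one positive semidefinite Hessian, verify $h_1,h_2\ge 0$ via convexity plus first-order information at $(q-Q_b,0)$ and $(q+Q_b,0)$, and close with zero duality gap. Your assignment of roles---$h_1$ the perfect square vanishing along the line through $(q-Q_b,0)$ and $(c\mu_1,b\mu_2)$, and $h_2$ certified by the first-order inequality $\nabla_{\bx}h_2(\overline{\bx}^*;\bz^*)^T(\bx-\overline{\bx}^*)=\frac{-\zeta_b-Q_b}{b\mu_2}x_2\ge 0$ at $\overline{\bx}^*=(q+Q_b,0)$---is precisely what the paper's explicit dual vector produces (the paper's write-up interchanges the $h_1$/$h_2$ labels, but the certificate and the use of Condition~4 are identical).
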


\begin{proof}[Proof of Lemma \ref{lem:4}]



	First, we observe that the optimal primal solution is the same as that in Lemma \ref{lem:2}, so the primal feasibility of this solution holds.

Second,  we shall verify the dual feasibility of the following dual solution$$\boldsymbol{z}^*=\left(\frac{(Q_b-q)^2}{4Q_b},\frac{Q_b-q}{2Q_b},\frac{(Q_b-q)(q-Q_b-c\mu_1)}{2bQ_b\mu_2},\frac{1}{4Q_b},\frac{(q-Q_b-c\mu_1)^2}{4Q_bb^2},\frac{q-Q_b-c\mu_1}{2bQ_b\mu_2}\right).$$
We consider two dual constraint functions $h_1(\boldsymbol{x};\boldsymbol{z}^*)$ and $h_2(\boldsymbol{x};\boldsymbol{z}^*)$. 
Note that $z_4^*\geq 0$ and $4z_4^*z_5^*-{z_6^*}^2=0$, then the Hessian matrix $\begin{pmatrix}
	2z_4^* & z_6^* \\
	z_6^* & 2z_5^*
	\end{pmatrix}$ is positive semidefinite, which means that both functions are convex. Next, we construct $\boldsymbol{x}^*=(q-Q_b,0)^T$ and $\overline{\boldsymbol{x}}^*=(q+Q_b,0)^T$.
 Then we have 
$$\nabla_{\boldsymbol{x}}h_1(\boldsymbol{x}^*;\boldsymbol{z}^*) ^T(\boldsymbol{x}-\boldsymbol{x}^*)
= \frac{-Q_b-\zeta_b}{b\mu_2}x_2 \geq 0, \;\;\forall \boldsymbol{x}\in \R_+^2.$$
Since $h_1(\boldsymbol{x};\boldsymbol{z}^*)$ is a convex function, for any $\boldsymbol{x}\in \R_+^2$, we have 
$h_1(\boldsymbol{x};\boldsymbol{z}^*)\geq h_1(\boldsymbol{x}^*;\boldsymbol{z}^*)=0.$
For function $h_2(\boldsymbol{x};\boldsymbol{z}^*)$, substitute $\overline{\boldsymbol{x}}^*$ into equation \eqref{h2}, and we have $\nabla_x h_2(\overline{\boldsymbol{x}}^*;\boldsymbol{z}^*)=(0,\;0)^T$. Thus, $h_2(\boldsymbol{x};\boldsymbol{z}^*) \geq h_2(\overline{\boldsymbol{x}}^*;\boldsymbol{z}^*)=0$. In all, $h_1(\boldsymbol{x};\boldsymbol{z}^*)\geq0$ and 
$h_2(\boldsymbol{x};\boldsymbol{z}^*)\geq0$ hold for all $\boldsymbol{x}\in \R_+^2$.

Lastly, it is easy to verify the objective values for the feasible primal-dual pair equal to $\frac{b-1}{2b} \left(\frac{b-c}{b-1}\mu_1 - (q-Q_b) \right)$. Hence, the zero duality gap implies the optimality of the feasible primal-dual pair. \end{proof}

\begin{lemma}
	\label{lem:5}
	Suppose that $2(a-c)q > (ab-c^2)\mu_2,\ Q_a  \leq -c\mu_2 - a\mu_1+q,$ and Assumption 1 hold. Then an optimal distribution for primal distribution for problem
	\ref{eq:bivariatePrimal_app} 
	can be characterized as
 \begin{equation}\label{distr:pointline5}
	 \begin{cases}
	(0,q-Q_a)  & \text{w.p. } \, p_1=\frac{a-1}{2a}+\frac{q(a-1)+\mu_2(c-a)}{2aQ_a} \\
	(0,q+Q_a) & \text{w.p. }  \, p_2=\frac{a-1}{2a}-\frac{q(a-1)+\mu_2(c-a)}{2aQ_a}\\
	( a\mu_1,c\mu_2) & \text{w.p. } \, p_3=\frac{1}{a}\\
	\end{cases},
 \end{equation}
 where $Q_a=\sqrt{q^2 + 2q\cdot \frac{c-a}{a-1}\mu_2 + \frac{ab-c^2}{a-1}\mu_2^2}$.
	The optimal value is $v_P(q;\theta)=
 \frac{a-1}{2a}\left(\frac{a-c}{a-1}\mu_2 -(q-Q_a) \right)$.

\end{lemma}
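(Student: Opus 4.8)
The plan is to obtain Lemma \ref{lem:5} from the already-proved Lemma \ref{lem:4} by the coordinate symmetry, exactly as Lemma \ref{lem:3} was obtained from Lemma \ref{lem:2}. The structural fact driving the argument is that both the objective $(X_1+X_2-q)_+$ in problem \eqref{eq:bivariatePrimal_app} and the ambiguity set $\mathcal{F}(\theta)$ are invariant under the relabeling of coordinates $(X_1,X_2)\mapsto(X_2,X_1)$: if $\mathbb{P}\in\mathcal{F}(\mu_1,\mu_2,a,b,c)$ and $\mathbb{P}'$ is its pushforward under this swap, then $\mathbb{P}'\in\mathcal{F}(\mu_2,\mu_1,b,a,c)$ and the two measures attain the same objective value. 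Consequently $v_P(q;\theta)$ is unchanged under the parameter substitution $(\mu_1,\mu_2,a,b,c)\mapsto(\mu_2,\mu_1,b,a,c)$, and worst-case distributions correspond to one another under the coordinate swap.

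First I would record how the auxiliary quantities behave under this substitution. A direct check gives $Q_b\mapsto Q_a$ and $\zeta_b=c\mu_1+b\mu_2-q\mapsto a\mu_1+c\mu_2-q=\zeta_a$. Hence the hypotheses of Lemma \ref{lem:4}, namely $Q_b<q$ and $Q_b\le-\zeta_b$, are carried precisely onto the hypotheses of Lemma \ref{lem:5}: the image $Q_a<q$ is equivalent to $2(a-c)q>(ab-c^2)\mu_2$ by the equivalence established in Appendix B.1, and $Q_a\le-\zeta_a$ is exactly $Q_a\le q-a\mu_1-c\mu_2$. Thus the two hypothesis sets are images of one another under the swap.

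Next I would transport the conclusion. Under the coordinate exchange and the parameter substitution, the three support points of Lemma \ref{lem:4}, namely $(q-Q_b,0)$, $(q+Q_b,0)$, and $(c\mu_1,b\mu_2)$, become $(0,q-Q_a)$, $(0,q+Q_a)$, and $(a\mu_1,c\mu_2)$, while the probabilities $p_1,p_2$ and $p_3=1/b$ map onto the expressions displayed in \eqref{distr:pointline5} with $p_3=1/a$; this relabeled distribution is in fact identical to the distribution of Lemma \ref{lem:3}, so its primal feasibility is already in hand. Because the swap preserves both membership in the ambiguity set and the objective value, this distribution is optimal for the instance of Lemma \ref{lem:5}, and its value is the image $\frac{a-1}{2a}\left(\frac{a-c}{a-1}\mu_2-(q-Q_a)\right)$ of the Lemma \ref{lem:4} optimal value. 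This establishes the claim.

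The step demanding the most care is the bookkeeping of the symmetry rather than any computation: one must verify that the problem is invariant only under the simultaneous swap $a\leftrightarrow b$ together with $\mu_1\leftrightarrow\mu_2$ (with $c$ held fixed), and confirm that the two defining inequalities of Condition 5 are the literal images of those of Condition 4, not merely analogous statements. If a self-contained verification is preferred, I would instead replicate the primal-dual scheme of Lemma \ref{lem:4}: primal feasibility is inherited from Lemma \ref{lem:3}, and it remains to exhibit the dual vector $\boldsymbol{z}^*$ obtained from the Lemma \ref{lem:4} dual by interchanging the pairs $(z_2,z_4)$ and $(z_3,z_5)$ and relabeling the parameters via the swap $(\mu_1,\mu_2,a,b,c)\mapsto(\mu_2,\mu_1,b,a,c)$ (which sends $Q_b$ to $Q_a$), then check $h_1,h_2\ge0$ through convexity and a vanishing-gradient argument at $(0,q\pm Q_a)$, and finally confirm the zero duality gap.
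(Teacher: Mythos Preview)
Your proposal is correct and matches the paper's own proof, which simply notes that Lemma \ref{lem:5} follows from Lemma \ref{lem:4} by the coordinate symmetry replacing $b$ by $a$ and $\mu_2$ by $\mu_1$. Your additional bookkeeping (verifying that the hypotheses, support points, probabilities, and optimal value transform correctly under the swap) and the optional self-contained primal--dual verification are more explicit than the paper's one-line appeal to symmetry, but the underlying idea is identical.
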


\begin{proof}[Proof of Lemma \ref{lem:5}]
The proof is similar to that in Lemma \ref{lem:4}. Note that the conditions in Lemma \ref{lem:4} and \ref{lem:5} are symmetric, in some sense, through	replacing $b$ by $a$ and $\mu_2$ by $\mu_1$.  
\end{proof}

\begin{lemma}
	\label{lem:6}
	Suppose that 
  $Q_a > |\zeta_a|, Q_b > |\zeta_b|$
 and Assumption 1 hold. Then an optimal distribution of problem
	\ref{eq:bivariatePrimal_app}
	 can be characterized as
	\begin{equation} \label{eq: dist6}
	\begin{cases}
	x^{(1)}=((1-t_0)\frac{U_a}{U_c}\mu_1,t_1)  & \text{w.p. } \, p_1=\frac{U_b\mu_2}{2Q_c t_1} \\
	x^{(2)}=(q-Q_c,0)  & \text{w.p. } \, p_2=\frac{U_a  \mu_1}{2Q_c t_1}t_0 \\
	x^{(3)}=(0,q+Q_c) & \text{w.p. }  \, p_3=\frac{V_b \,u_2}{2Q_c t_2}t_0 \\
	x^{(4)}=(t_2,(1-t_0)\frac{V_b}{V_c}\mu_2) & \text{w.p. }  \, p_4=\frac{V_a\mu_1}{2Q_ct_2}\\
	\end{cases} \;\; 
	\end{equation}
	where  
$	U_a=(q+Q_c)-(a\mu_1+c\mu_2), V_a=(a\mu_1+c\mu_2)-(q-Q_c), U_b=(q+Q_c)-(b\mu_2+c\mu_1), V_b=(b\mu_2+c\mu_1)-(q-Q_c), U_c=(q+Q_c)-(\mu_1+\mu_2), V_c=(\mu_1+\mu_2)-(q-Q_c)$, $Q_c = \sqrt{q^2 - 2q(\mu_1 + \mu_2) + a\mu_1^2 + b\mu_2^2 + 2c\mu_1\mu_2}$, and
$$	t_1= \frac{U_b}{U_c}\mu_2+t_0\frac{U_a}{U_c}\mu_1, \;\;\;
	t_2= \frac{V_a}{V_c}\mu_1+t_0\frac{V_b}{V_c}\mu_2, \;\;\; t_0=\frac{Det(M)}{Det(M)+\Sigma_{12} U_c V_c}\;, $$ 
 with covariance matrix $M$ defined in (\ref{eq: M_apx}).
 The optimal value is $v_P(q;\theta)=\frac{1}{2}V_c$.
\end{lemma}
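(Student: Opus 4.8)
The plan is to prove the two bounds separately and match them, rather than grinding through an explicit $6\times 6$ dual certificate. The crucial structural observation under Condition 6 is that the objective $(X_1+X_2-q)_+$ depends on $\bX$ only through the pooled variable $\bar X=X_1+X_2$. This lets me relax the bivariate problem to a univariate one for the upper bound and then match it with the proposed four-point distribution for the lower bound.

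For the upper bound, I would push every $\Prob\in\F(\theta)$ forward under the map $(x_1,x_2)\mapsto x_1+x_2$. The image measure is supported on $\R_+$ and has first moment $\bar\mu=\mu_1+\mu_2$ and second moment $\bar\Sigma=a\mu_1^2+b\mu_2^2+2c\mu_1\mu_2$, so it lies in the univariate ambiguity set $\bar{\F}_1$. Since the objective is preserved, $v_P(q;\theta)\le\sup_{\Prob\in\bar{\F}_1}\Ex_{\Prob}[(\bar X-q)_+]$. Noting that $Q_c$ coincides with Scarf's quantity $\sqrt{q^2-2\bar\mu q+\bar\Sigma}$ and that Condition 6 forces $q>\bar\Sigma/(2\bar\mu)$, hence $q\ge Q_c$, Lemma 1, part 2, evaluates the right-hand side to $\tfrac12(Q_c-q+\mu_1+\mu_2)=\tfrac12 V_c$.

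For the lower bound, I would verify that the displayed four-point law lies in $\F(\theta)$ and attains $\tfrac12 V_c$. First, $t_0\in[0,1]$ because $\det M=[(a-1)(b-1)-(c-1)^2]\mu_1^2\mu_2^2\ge 0$ by Assumption 1 and $\Sigma_{12}U_cV_c\ge 0$, where $U_c,V_c>0$ since $Q_c\ge|q-\bar\mu|$ (as $Q_c^2-(q-\bar\mu)^2=\bar\Sigma-\bar\mu^2\ge 0$). Next I would check that all four points lie in $\R_+^2$: this reduces to $U_a,V_a,U_b,V_b\ge 0$, i.e. $Q_c\ge|\zeta_a|$ and $Q_c\ge|\zeta_b|$, together with $q-Q_c\ge 0$, all of which I would extract from Condition 6 using the case analysis of Appendix B.1. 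The weights then satisfy $p_1+p_2=U_c/(2Q_c)$ and $p_3+p_4=V_c/(2Q_c)$, which are nonnegative and sum to $1$ because $U_c+V_c=2Q_c$. A direct identity shows $x_1^{(1)}+x_2^{(1)}=q-Q_c$ and $x_1^{(4)}+x_2^{(4)}=q+Q_c$, so the two left points sit on the line $\bar X=q-Q_c$ and the two right points on $\bar X=q+Q_c$; consequently the objective equals $0$ at $x^{(1)},x^{(2)}$ and $Q_c$ at $x^{(3)},x^{(4)}$, giving expected value $Q_c(p_3+p_4)=\tfrac12 V_c$. Finally I would confirm the five moment constraints $\Ex[X_1]=\mu_1$, $\Ex[X_2]=\mu_2$, $\Ex[X_1^2]=a\mu_1^2$, $\Ex[X_2^2]=b\mu_2^2$, and $\Ex[X_1X_2]=c\mu_1\mu_2$.

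The main obstacle is this last moment verification, and within it the cross-moment constraint $\Ex[X_1X_2]=c\mu_1\mu_2$: the parameter $t_0=\det M/(\det M+\Sigma_{12}U_cV_c)$ is calibrated precisely so that this equation holds, and confirming it requires carefully expanding the rational coordinates $t_1,t_2$ and the weights. A secondary obstacle is rigorously deducing the sign facts $q\ge Q_c$ and $Q_c\ge|\zeta_a|,|\zeta_b|$ from the raw inequalities of Condition 6, which mirrors the region bookkeeping already carried out in Appendix B.1. Once primal feasibility, the attained value $\tfrac12 V_c$, and the matching univariate upper bound are in place, the two bounds coincide and optimality follows; equivalently, one could repackage the upper bound as an explicit dual by lifting Scarf's univariate quadratic certificate via $z_4=z_5=z_6/2$, so that $h_1,h_2$ become functions of $x_1+x_2$ alone, keeping the argument in line with the primal--dual format of Lemmas B.2.1--B.2.5.
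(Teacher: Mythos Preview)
Your proposal is correct and essentially matches the paper's proof. For the upper bound the paper writes down the explicit dual
\[
\bz^*=\Bigl(\tfrac{(Q_c-q)^2}{4Q_c},\ \tfrac{Q_c-q}{2Q_c},\ \tfrac{Q_c-q}{2Q_c},\ \tfrac1{4Q_c},\ \tfrac1{4Q_c},\ \tfrac1{2Q_c}\Bigr),
\]
which is exactly the lifted Scarf certificate you describe at the end (so that $h_1=\tfrac1{4Q_c}(x_1+x_2-q+Q_c)^2$ and $h_2=\tfrac1{4Q_c}(x_1+x_2-q-Q_c)^2$), and then verifies primal feasibility of the four-point law and the zero gap just as you outline; the sign checks $q\ge Q_c$ and $Q_c>|\zeta_a|,|\zeta_b|$ that you flag as the secondary obstacle are handled in the paper with the same brevity. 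One small practical remark: going straight to this explicit dual (your secondary suggestion) is marginally cleaner than the pushforward to the half-space Scarf problem, since the perfect-square dual constraints hold for all $q$, so $q\ge Q_c$ is then needed only once, for the nonnegativity of $x^{(2)}$.
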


\begin{proof}[Proof of Lemma \ref{lem:6}]
	

 First, we shall verify the feasibility of the primal solution (\ref{eq: dist6}). Because of 
 the condition 6 with $Q_a>|q-a\mu_1 - c\mu_2|, Q_b> |q-c\mu_1 - b\mu_2|$, we have $V_a, V_b, U_a, U_b>0$ immediately. Moreover, we also have $V_c, U_c>0$ due to $Q_c^2-(q-\mu_1-\mu_2)^2=(a-1)\mu_1^2+2(c-1)\mu_1\mu_2+(b-1)\mu_2^2=\V ar(X_1+X_2)>0$. Thus, the distribution (\ref{eq: dist6}) has positive support and probability. As $$(p_1+p_2)+(p_3+p_4)=
	\frac{U_b\mu_2+t_0U_a\mu_1}{2Q_c t_1}+\frac{t_0V_b\mu_2+V_a\mu_1}{2Q_ct_2}=\frac{U_c}{2Q_c}+\frac{V_c}{2Q_c}=1,$$
	we conclude the primal solution (\ref{eq: dist6}) is a well-defined distribution. To verify this distribution satisfies corresponding constraints, we state two useful equations $q-Q_c=\frac{U_a}{U_c}\mu_1+\frac{U_b}{U_c}\mu_2, q+Q_c=\frac{V_b}{V_c}\mu_2+\frac{V_a}{V_c}\mu_1$
	that will be frequently employed. Specially, we have
	\begin{align*}
	\Ex[X_1]&=\sum_{i=1}^4 p_i x^{(i)}_1= \mu_1\Big(\frac{(1-t_0)U_aU_b\mu_2+t_0U_a^2\mu_1+t_0U_aU_b\mu_2}{2Q_cU_ct_1}+\frac{V_a}{2Q_c}\Big)=\mu_1(\frac{U_a}{2Q_c}+\frac{V_a}{2Q_c})=\mu_1,    \\
	\Ex[X_2]&=\sum_{i=1}^4 p_i x^{(i)}_2=\mu_2 \Big(\frac{V_b}{2Q_c}+\frac{t_0V_b^2\mu_2+t_0V_aV_b\mu_1+(1-t_0)V_aV_b\mu_1}{2Q_cV_ct_2}\Big)=\mu_2(\frac{U_b}{2Q_c}+\frac{V_b}{2Q_c})=\mu_2,\\
	\Ex[X_1X_2]&=\sum_{i=1}^4 p_i x^{(i)}_1  x^{(i)}_2 =(1-t_0)\frac{\mu_1\mu_2}{2Q_c}\Big(\frac{U_a U_b}{U_c}+\frac{V_aV_b}{V_c}\Big)=(1-t_0)\frac{\mu_1\mu_2}{2Q_c}\Big(\frac{2Q_c}{U_cV_c}\frac{Det(M)}{\mu_1\mu_2}+2cQ_c\Big)=c\mu_1\mu_2, 
	\end{align*}
 \begin{align*}
	\Ex[X_1(X_1+X_2)] &=\sum_{i=1}^4 p_i x^{(i)}_1 ( x^{(i)}_1 +x^{(i)}_2)=(q-Q_c)[p_1(q-Q_c-t_1))+p_2(q-Q_c)]+p_4(q+Q_c)t_2,\\
	&=(q-Q_c)(\mu_1-p_3x^{(3)}_1-p_4x^{(4)}_1)+p_4(q+Q_c)t_2=(q-Q_c)\mu_1+2Q_cp_4t_2=a\mu_1^2+c\mu_1\mu_2\\
	\Ex[(X_1+X_2)X_2] &=\sum_{i=1}^4 p_i ( x^{(i)}_1 +x^{(i)}_2)x^{(i)}_2 =p_1(q-Q_c)t_1+(q+Q_c)[p_3(q+Q_c)+p_4(q+Q_c-t_2)],\\
	&=p_1(q-Q_c)t_1+(q+Q_c)(\mu_2-p_1x^{(1)}_2-p_2x^{(2)}_2)=(q+Q_c)\mu_2-2Q_cp_1t_1=b\mu_2^2+c\mu_1\mu_2.
	\end{align*}
	In all, the primal feasibility holds.
	
	Second, we shall verify the dual feasibility of the following dual solution $$\boldsymbol{z}^*=\left(\frac{(Q_b-q)^2}{4Q_c},\frac{Q_c-q}{2Q_c},\frac{Q_c-q}{2Q_c},\frac{1}{4Q_C}, \frac{1}{4Q_C},\frac{1}{2Q_C}\right).$$
	 By straightforward calculations, we have $h_1(x;\boldsymbol{z}^*)=\frac{1}{4Q_c}(x_1+x_2-q+Q_c)^2 \geq 0$ and $h_2(x;\boldsymbol{z}^*)=\frac{1}{4Q_c}(x_1+x_2-q-Q_c)^2 \geq 0$. Thus, $\boldsymbol{z}^*$ is a dual feasible solution.
	
	Lastly, it is easy to verify the objective values for the feasible primal-dual pair equal to $\frac{1}{2}V_c$. Hence, the zero duality gap implies the optimality of the feasible primal-dual pair. \end{proof}

\section*{Appendix C: Proof of Proposition 2}
\setcounter{pro}{4}

From Theorem 2.1 in \cite{shapiro2002minimax}, the bivariate moment problem
\setcounter{equation}{0}
\renewcommand{\theequation}{C.\arabic{equation}}
\begin{equation}
	\label{eq:multiPieceBivariate}
	\begin{split}
 \sup_{\mathbb{P}\in \mathcal{F}(\theta)}\ \mathbb{E}_{\mathbb{P}} \left[\max_{k=1, ..., K} \left\{ 
 \ell_{k}(\boldsymbol{X}) \right\} \right]
	\end{split}
\end{equation}
is equivalent to its dual problem
\begin{align*}
&\inf_{\boldsymbol{z}}\quad z_1 + \mu_1 z_2 + \mu_2 z_3 + \Sigma_{11} z_4 + \Sigma_{22}z_5 + \Sigma_{12} z_6 \\
\text{s.t.}\quad z_1 + z_2 x_1+ z_3 x_2+ &z_4 x_1^2+ z_5 x_2^2+ z_6 x_1x_2\geq \max_{k=1, ..., m} \left\{ w_{1k} + w_{2k} x_1 + w_{3k} x_2 + w_{4k} x_1^2 + w_{5k} x_2^2 + w_{6k} x_1x_2 \right\}, 
\end{align*}
for all $\boldsymbol{x}\in \mathbb{R}_+^2$.
Let $y_1^2 = x_1$ and $y_2^2 = x_2$, and then we have
\begin{equation*}
\begin{split}
&z_1 + z_2 x_1+ z_3 x_2+ z_4 x_1^2+ z_5 x_2^2+ z_6 x_1x_2 \geq w_{1k} + w_{2k} x_1 + w_{3k} x_2 + w_{4k} x_1^2 + w_{5k} x_2^2 + w_{6k} x_1x_2,\;\; \forall \boldsymbol{x}\in \mathbb{R}_+^2 \\
\Leftrightarrow\quad &z_1 - w_{1k} + (z_2 - w_{2k}) y_1^2+ (z_3 - w_{3k}) y_2^2+ (z_4 - w_{4k}) y_1^4 + (z_5 - w_{5k}) y_2^4+ (z_6 - w_{6k}) y_1^2y_2^2\geq 0,\;\; \forall \boldsymbol{y}\in \mathbb{R}^2.
\end{split}
\end{equation*}
The left-hand-side of the inequality can be represented by the sum of squares of polynomials, that is, the above inequality is equivalent to
\begin{align*}
&\begin{pmatrix}
y_1^2 \\
y_1y_2 \\
y_2^2 \\
y_1 \\
y_2 \\
1
\end{pmatrix}^T
M \left(\boldsymbol{z} - \boldsymbol{w}^{(k)}, \boldsymbol{g}^{(k)}, \boldsymbol{h}^{(k)} \right) \begin{pmatrix}
y_1^2 \\
y_1y_2 \\
y_2^2 \\
y_1 \\
y_2 \\
1
\end{pmatrix} \geq 0,\qquad \forall \boldsymbol{y}\in \mathbb{R}^2 \\
\Leftrightarrow\quad &M \left(\boldsymbol{z} - \boldsymbol{w}^{(k)}, \boldsymbol{g}^{(k)}, \boldsymbol{h}^{(k)} \right) \succeq 0
\end{align*}
for some $\boldsymbol{g}^{(k)}$ and $\boldsymbol{h}^{(k)}$. Therefore, the original problem \eqref{eq:multiPieceBivariate} can be reformulated by
\begin{equation*}
\begin{split}
\inf_{\boldsymbol{z},G,H}\quad &z_1 + \mu_1 z_2 + \mu_2 z_3 + \Sigma_{11} z_4 +  \Sigma_{22} z_5 + \Sigma_{12} z_6 \\
\text{s.t.}\quad &M \left(\boldsymbol{z} - \boldsymbol{w}^{(k)}, \boldsymbol{g}^{(k)}, \boldsymbol{h}^{(k)} \right) \succeq 0,\qquad k=1, ..., m
\end{split}
\end{equation*}
with $\boldsymbol{z} \in  \mathbb{R}^{6}, {G} \in \mathbb{R}^{3 \times K}, {H} \in \mathbb{R}^{3 \times K}$ and the matrix  $M(\tilde{\boldsymbol{z}},\boldsymbol{g},{\boldsymbol{h}})$ defined as follows:
\begin{equation*}
\begin{split}
	 \begin{pmatrix}
	\tilde{z}_4 & 0 & -{g}_1 & 0 & -{h}_1 & -{g}_2 \\
	0 & \tilde{z}_6 +2{g}_1 & 0 & {h}_1 & -{h}_2 & -h_3 \\
	-{g}_1 & 0 & \tilde{z}_5 & {h}_2 & 0 & -{g}_3 \\
	0 & {h}_1 & {h}_2 & \tilde{z}_2 + 2{g}_2 & {h}_3 & 0 \\
	-{h}_1 & -{h}_2 & 0 & {h}_3 & \tilde{z}_3 + 2{g}_3 & 0 \\
	-{g}_2 & -{h}_3 & -{g}_3 & 0 & 0 & \tilde{z}_1 
	\end{pmatrix}
\end{split}
\end{equation*}
where $\boldsymbol{w}^{(k)}, \boldsymbol{g}^{(k)}, \boldsymbol{h}^{(k)}$ are the $k$th column vectors of matrices $W$, ${G}$, ${H}$ respectively. 

\section*{Appendix D: Proof of Proposition 3}

From Theorem 4.5.7 in book \cite{casella2002statistical}, we know that $|\rho|=1$ if and only if there exists numbers $\varphi_1\neq 0$ and $\varphi_2$ such that $\mathbb{P}(X_1=\varphi_1X_2+\varphi_2)=1$. 
If $X_1$ and $X_2$ are perfectly correlated with $\rho=1$, we have the following moment constraints due to $X_1=\varphi_1X_2+\varphi_2$ and $\varphi_1>0$:
 $$\mathbb{E} [\varphi_1X_2+\varphi_2] = \mu_1, \mathbb{E}[X_2] = \mu_2, \mathbb{E}[(\varphi_1X_2+\varphi_2)^2] = a\mu_1^2, \mathbb{E} [X_2^2]=b\mu_2^2.$$
From these equations, $\varphi_1$ and $\varphi_2$ should satisfy $\varphi_1\mu_2+\varphi_2=\mu_1$ and $b\varphi_1^2\mu_2^2+\varphi_2^2+2\varphi_1\varphi_2\mu_2=a\mu_1^2.$
Then we can obtain $\varphi_1=\sqrt{\frac{a-1}{b-1}}\frac{\mu_1}{\mu_2}$ and $\varphi_2=\left(1-\sqrt{\frac{a-1}{b-1}}\right) \mu_1$ and thus $X_1=\sqrt{\frac{a-1}{b-1}}\frac{\mu_1}{\mu_2} X_2+\left(1-\sqrt{\frac{a-1}{b-1}}\right) \mu_1$.
Moreover, note that we assume $1\leq a\leq b$, then $\left(1-\sqrt{\frac{a-1}{b-1}}\right) \mu_1\geq0$. From the nonnegativity of $X_2$, we know that $X_1\geq\left(1-\sqrt{\frac{a-1}{b-1}}\right) \mu_1$.

\section*{Appendix E: Closed-Form Solution of the DRO Newsvendor Problem}
We study the DRO newsvendor problem as follows:
\begin{equation*}
\inf_{q\geq 0} \left\{\sup_{\mathbb{P}\in \mathcal{F}(\theta)}\ \mathbb{E}_{\mathbb{P}} \left[\left(X_1+X_2 -q \right)_+\right] + (1-\eta)q \right\},
 \end{equation*} 
where $\mathcal{F}(\theta)$ is the mean-covariance ambiguity set in (\ref{eq: amSet_app}), and the critical ratio $\eta$ is a given constant in $(0,1)$. 

We explain the procedure of solving the closed-form solution $q^*$ introduced in Section 4.1. Firstly, each $q_i^*$ locates at either a stationary point of $v_P(q;\theta) + (1- \eta)q$ or a boundary point of the interval $A_i$. The formulation of each interval $A_i$ is discussed in Table \ref{tab:tableRegionQ}. Thus, we study the stationary point of 
\begin{equation*}
    f(q) = v_P(q;\theta) + (1- \eta)q
\end{equation*}
in each interval $A_i$. For simplicity, we denote
\begin{align*}
    S_a(\eta) &= \sqrt{\frac{(a-1)(ab-c^2) - (c-a)^2}{4a\eta (a-a\eta -1)}}, \\
    S_b(\eta) &= \sqrt{\frac{(b-1)(ab-c^2) - (c-b)^2}{4b\eta (b-b\eta -1)}}, \\
    S_c(\eta) &= \sqrt{\frac{(a-1)\mu_1^2 + (b-1)\mu_2^2 + 2(c-1)\mu_1\mu_2}{4\eta(1-\eta)}}.
\end{align*}


For condition 2 as an example, we have
\begin{equation*}
    \begin{split}
        f(q) &= \frac{b-1}{2b}\sqrt{q^2 - 2q \frac{b-c}{b-1}\mu_1 + \frac{ab-c^2}{b-1}\mu_1^2} + \left(\frac{b-1}{2b}-\eta \right)q + \frac{b+c}{2b}\mu_1 +\mu_2
    \end{split}
\end{equation*}
with its derivative
\begin{equation*}
    f'(q) = \frac{b-1}{2b} \frac{q - \frac{b-c}{b-1}\mu_1}{\sqrt{q^2 - 2q \frac{b-c}{b-1}\mu_1 + \frac{ab-c^2}{b-1}\mu_1^2}} + \frac{b-1}{2b}-\eta. 
\end{equation*}
Thus, the stationary point is obtained  as follows by setting  $f^{\prime}(q)=0$:
\begin{align*}
q &= \left[(2b\eta - b + 1) \sqrt{\frac{(b-1)(ab-c^2) - (c-b)^2}{4b\eta (b-b\eta -1)}} - (c-b) \right] \frac{\mu_1}{b-1} \\
&= \left[(2b\eta - b + 1) S_b(\eta) - (c-b) \right]\frac{\mu_1}{b-1},
\end{align*}
which exists only if $0 < \eta < 1- \frac{1}{b}$.

The stationary points under other conditions can be derived by the same approach shown under condition 2. We provide Table \ref{tab:stationaryPoints} to summarize the closed-form stationary points in all cases. Lastly, we remark that the stationary point shown in this table for each condition may be out of the interval $A_i$. In all, given this table, it is not difficult to obtain the closed-form expressions of the optimal solutions.

\setcounter{table}{0}
\renewcommand{\thetable}{E.\arabic{table}}
\begin{table}[htbp]
    \centering
    \begin{tabular}{c|cl}
    \hline
    Conditions & Stationary Points & Feasible $\eta$\\
    \hline
    condition 1 & $f(q)$ is linear in $q\in A_1$ & \\
    condition 2 & $\left[(2b\eta - b + 1) S_b(\eta) - (c-b) \right]\frac{\mu_1}{b-1}$, & $0 < \eta < 1- \frac{1}{b}$ \\
    condition 3 & $\left[(2a\eta - a + 1)S_a(\eta) - (c-a) \right]\frac{\mu_2}{a-1}$, & $0 < \eta < 1- \frac{1}{a}$ \\
    condition 4 & $\left[(2b\eta - b + 1) S_b(1-\eta) - (c-b) \right]\frac{\mu_1}{b-1}$, & $\frac{1}{b} < \eta < 1$ \\
    condition 5 & $\left[(2a\eta - a + 1)S_a(1-\eta) - (c-a) \right]\frac{\mu_2}{a-1}$, & $\frac{1}{a} < \eta < 1$ \\
    condition 6 & $(2\eta-1)S_c(\eta) + \mu_1 + \mu_2$, & $0 < \eta < 1$ \\
    \hline
    \end{tabular}
    \caption{Stationary Points}
    \label{tab:stationaryPoints}
\end{table}

\end{document}